\numberwithin{equation}{section}
\newtheorem{theorem}{Theorem}[section]
\newtheorem{proposition}[theorem]{Proposition}
\newtheorem{lemme}[theorem]{Lemma}
\newtheorem{mainthm}{Theorem}
\newtheorem{Hyp}{Hypothesis}
\newtheorem{cor}[mainthm]{Corollary}
\theoremstyle{definition}
\newtheorem{defi}[theorem]{Definition}
\newtheorem{remq}[theorem]{Remark}
\newcommand{\Cbb}{\mathbb{C}}
\newcommand{\Mscr}{\mathscr{M}}
\newcommand{\ifm}{\mathrm{if}}
\newcommand{\Fbf}{\mathbf{F}}
\newcommand{\cond}{\mathrm{cond}}
\newcommand{\PGL}{\mathrm{PGL}}
\newcommand{\GL}{\mathrm{GL}}
\newcommand{\chibar}{\overline{\chi}}
\newcommand{\Brm}{\mathrm{\mathbf{B}}}
\newcommand{\Trm}{\mathrm{T}}
\newcommand{\Nrm}{\mathrm{\mathbf{N}}}
\newcommand{\Fcal}{\mathcal{F}}
\newcommand{\Tr}{\mathrm{Tr}}
\newcommand{\Lcal}{\mathcal{L}}
\newcommand{\amf}{\textfrak{a}}
\newcommand{\Ecal}{\mathcal{E}}
\newcommand{\Abf}{\mathbf{A}}
\newcommand{\omegabar}{\overline{\omega}}
\newcommand{\Acal}{\mathcal{A}}
\newcommand{\Abb}{\mathbb{A}}
\newcommand{\sbar}{\overline{s}}
\newcommand{\Oscr}{\mathscr{O}}
\newcommand{\Cscr}{\mathscr{C}}
\newcommand{\Dscr}{\mathscr{D}}
\newcommand{\Zrm}{\mathrm{\mathbf{Z}}}
\newcommand{\Prm}{\mathrm{\mathbf{P}}}
\newcommand{\Bscr}{\mathscr{B}}
\newcommand{\Nm}{\mathrm{\mathbf{N}}}
\newcommand{\Grm}{\mathrm{\mathbf{G}}}
\newcommand{\Arm}{\mathrm{A}}
\newcommand{\Xbf}{\mathbf{X}}
\newcommand{\Rbb}{\mathbb{R}}
\newcommand{\mmf}{\textfrak{m}}
\newcommand{\lmf}{\textfrak{l}}
\newcommand{\Qbb}{\mathbb{Q}}
\newcommand{\Pscr}{\mathscr{P}}
\newcommand{\Bm}{\mathrm{\mathbf{B}}}
\newcommand{\reg}{\mathrm{reg}}
\newcommand{\Krm}{\mathrm{\mathbf{K}}}
\newcommand{\Nbb}{\mathbb{N}}
\newcommand{\Exp}{\Ecal\mathrm{x}}
\newcommand{\Erm}{\mathrm{E}}
\newcommand{\Lrm}{\mathrm{L}}
\newcommand{\X}{\Xbf_{\PGL_2}}
\newcommand{\pmf}{\textfrak{p}}
\newcommand{\qmf}{\textfrak{q}}
\newcommand{\F}{\mathrm{\mathbf{F}}}
\newcommand{\crm}{\mathrm{c}}
\newcommand{\Wcal}{\mathcal{W}}
\newcommand{\Ocal}{\mathcal{O}}
\newcommand{\Zcal}{\mathcal{Z}}
\newcommand{\Lscr}{\mathscr{L}}
\newcommand{\Vcal}{\mathcal{V}}
\newcommand{\Gscr}{\mathscr{G}}
\newcommand{\Wrm}{\mathrm{W}}
\newcommand{\Nscr}{\mathscr{N}}
\title{Periods and Reciprocity II}
\author{Raphaël Zacharias}
\date{\today}
\begin{document}

\maketitle

\begin{abstract}
Let $\F$ be a number field with adele ring $\Abb_\F$, $\textfrak{q},\textfrak{l}$ two coprime integral ideals with $\qmf$ squarefree and $\pi_1,\pi_2$ two fixed unitary automorphic representations of $\PGL_2(\Abb_\F)$ unramified at all finite places. In this paper, we use regularized integrals to obtain a formula that links the first moment of $L(\pi\otimes\pi_1\otimes\pi_2,\tfrac{1}{2})$ twisted by the Hecke eigenvalues $\lambda_\pi (\lmf)$, where $\pi$ runs through unitary automorphic representations of $\PGL_2(\Abb_\F)$ with conductor dividing $\qmf$, with some spectral expansion of periods over representations of conductor dividing $\lmf$. In the special case where $\pi_1=\pi_2=\sigma$, this formula becomes a reciprocity relation between moments of $L$-functions. As applications, we obtain a subconvex estimate in the level aspect for the central value of the triple product $L(\pi\otimes\pi_1\otimes\pi_2,\tfrac{1}{2})$ and a simultaneous non-vanishing result for $L(\mathrm{Sym}^2(\sigma)\otimes \pi,\tfrac{1}{2})$ and $L(\pi,\tfrac{1}{2})$.
\end{abstract}
\tableofcontents

\section{Introduction}
 Let $q,\ell$ be two coprime integers. Reciprocity Formulae for moments of automorphic $L$-functions have received a considerable attention in the past few years \cite{conrey,young,bettin,blomerspectral,blomer4,andersen}. These are remarkable results for at least two reasons. The first is their applications to the subconvexity problem and the non-vanishing of $L$-functions. The second, more conceptual, is that it connects objects, at least in the $\GL_2$ case, that have no a priori reason to be linked, like the different hyperbolic surfaces $\Gamma_0(q)\setminus \mathbb{H}$ and $\Gamma_0(\ell)\setminus \mathbb{H}$.

\subsection{The idea of the paper}\label{Section1}
\noindent Let $\F$ be a number field and denote by $\Abb_\F$ its adele ring and by $\Ocal_\F$ its ring of integers. Let $\pmf,\qmf$ two prime ideals of $\Ocal_\F$ of norm $p$ and $q$ respectively and $\pi_1,\pi_2$ be two unitary automorphic representations of $\PGL_2(\Abb_\F)$ which are unramified at all finite places and with $\pi_1$ cuspidal.
In our previous paper \cite{raphael}, we proved, using a method related to adelic periods of automorphic forms, that the work of Andersen and Kiral \cite{andersen} holds more generally over any number fields and for the first moment of a triple product $L$-function. We obtained a relation of the shape
\begin{equation}\label{Equ1}
\begin{split}
\mathrm{M}(\pi_1,\pi_2,\qmf,\pmf) := & \ \sum_{\substack{\pi \\ \cond(\pi)|\qmf}}L(\pi\otimes\pi_1 \otimes\pi_2,\tfrac{1}{2})\lambda_\pi(\pmf)\varepsilon_\pi(\qmf) \\  & \ \rightsquigarrow \left(\frac{q}{p}\right)^{1/2}\sum_{\substack{\pi  \\ \cond(\pi)|\pmf}}L(\pi\otimes\pi_1\otimes\pi_2,\tfrac{1}{2})\lambda_\pi(\qmf)\varepsilon_\pi(\pmf),
\end{split}
\end{equation}
where $\lambda_\pi(\pmf)$ is the eigenvalue of the Hecke operator $\Trm_\pmf$ and $\varepsilon_\pi(\qmf)\in\{-1,+1\}$ denotes the Atkin-Lehner eigenvalue at $\qmf$. We recover the result of Andersen and Kiral in the particular case where $\F=\mathbb{Q}$ and $\pi_2=1\boxplus 1$ is the Eisenstein series induced from two trivial characters.

\vspace{0.1cm}

In this paper, we study a similar reciprocity relation, but with two mains novelties :
\begin{enumerate}[(1)]
\item Firstly, we managed to cancel the root number $\varepsilon_\pi(\qmf)$ on the lefthand side of \eqref{Equ1}. This is an important structural feature, because it allows to use positivity arguments, typically for the subconvexity problem in Theorem \ref{Theorem2}.
\item Secondly, using a regularized version of Plancherel formula introduced for the first time by Zagier \cite{zagier} and then developped adelically by Michel and Venkatesh in \cite{subconvexity}, we are also able to include the case $\pi_1=\pi_2=1\boxplus 1$, giving in particular a reciprocity statement for a fourth moment of $\GL_2$ automorphic $L$-functions, as in \cite{blomerspectral}. 
\end{enumerate}
We mention that if we do not include the root number in our moment, we cannot expect a perfect symmetry between these two moments, as in \eqref{Equ1}. The principal reason is that if we estimate \og trivially \fg{} the righthand side, we see that modulo the Ramanujan-Petersson Conjecture, we have
$$\mathrm{M}(\pi_1,\pi_2,\qmf,\pmf) \ll (pq)^{1/2}.$$
Hence taking $\pmf=1$, we see that $\mathrm{M}(\pi_1,\pi_2,\qmf,1)$ has size $q^{1/2}$ while for the same moment without the root number, we should have a main term of size $q$. We see in particular that the root number has a square root cancellation effect. We can summarize this by saying that
\begin{alignat*}{1}
\mathrm{root \ number } & \ \leadsto \mathrm{perfect \ symmetry}  \\
 & \\ 
\mathrm{no \ root \ number} & \ \leadsto \mathrm{main \ term}
\end{alignat*}
(see also \cite{blomer4} where this is the key observation).
This phenomenon can be read directly in some periods underlying the moments of $L$-functions. For example, the period giving \eqref{Equ1} is (see \cite[(3.1)]{raphael}) 
$$\int_{\PGL_2(\F)\setminus \PGL_2(\Abb_\F)} \varphi_1^{\pmf\qmf}\varphi_2^\pmf \overline{\varphi_1\varphi_2^\qmf}=\left\langle \varphi_1^{\pmf\qmf}\varphi_2^{\pmf},\varphi_1\varphi_2^\qmf\right\rangle,$$
where for any finite place $v$, $\varphi_i^v$ denotes the right translate of $\varphi_i$ by the matrix $\left(\begin{smallmatrix}
1 & \\ & \varpi_v
\end{smallmatrix}\right)$ and $\varpi_v$ is a uniformizer for the completed field $\F_{v}$. This is a symmetric expression in $(\pmf,\qmf)$ (modulo complex conjugate). If we take $\pmf=1$,  the integrand is not positive and in fact oscillates because the translation is not on the same vector in both sides of the inner product; this oscillation is exactly the root number when we pass to moments of $L$-functions via Plancherel formula. 

\vspace{0.1cm}

If we want to avoid the root number on the left of \eqref{Equ1}, we need to consider periods such that when we set $\pmf=1$, we get a positive integrand. Ignoring convergence problems due the Eisenstein case, the simplest one is
\begin{equation}\label{Type1}
\int_{\PGL_2(\F)\setminus \PGL_2(\Abb_\F)} \varphi_1^{\pmf\qmf}\varphi_2^{\pmf}\overline{\varphi_1^\qmf\varphi_2}=\left\langle \varphi_1^{\pmf\qmf}\varphi_2^{\pmf},\varphi_1^\qmf\varphi_2\right\rangle.
\end{equation}
Observe that this integral is no more symmetric, except when $\varphi_1=\varphi_2$. Assuming that $\varphi_1,\varphi_2$ are unramified at the finite places, the last inner product is roughly equal to 
$$\frac{1}{p^{1/2}}\sum_{\substack{\pi \\ \crm(\pi) | \qmf}}\lambda_\pi(\pmf)\sum_{\psi\in\Bscr(\pi,\qmf)}\left| \left\langle \psi,\varphi_1^\qmf \varphi_2 \right\rangle\right|^2,$$
where $\Bscr(\pi,\qmf)$ denotes an orthonormal basis of $\Krm_0(\qmf)$-invariant vectors in $\pi$ (see Section \ref{Sectionsubgroups} for notations). Using integral representations of $L$-functions recalled in Section \ref{SectionRankin}, this is clearly related to the moment of $L(\pi\otimes\pi_1\otimes\pi_2,\tfrac{1}{2})$. Now using the trick of \cite{subconvexity}, we observe that the inner product can be also written as
\begin{equation}\label{PeriodExpansion}
\left\langle \varphi_1^{\pmf\qmf}\overline{\varphi}_1^\qmf, \varphi_2\overline{\varphi}_2^{\pmf}\right\rangle \asymp \frac{1}{q^{1/2}}\sum_{\crm(\pi)|\pmf}\lambda_\pi(\qmf)\sum_{\psi\in\Bscr(\pi,\pmf)}\left\langle \varphi_1^\pmf\overline{\varphi}_1, \psi\right\rangle\left\langle \psi, \varphi_2\overline{\varphi}_2^\pmf\right\rangle
\end{equation}
plus extra terms coming from regularization if one of the $\pi_i$'s is Eisenstein. The righthand side becomes a moment of $L$-functions in the special case where $\pi_1=\pi_2$ and $\varphi_1=\varphi_2$, giving a reciprocity formula for the fourth moment of $L(\pi,\tfrac{1}{2})$ in the case $\pi_1=\pi_2=1\boxplus 1$, or for $L(\mathrm{Sym}^2(\sigma)\otimes \pi,\frac{1}{2})L(\pi,\tfrac{1}{2})$ when $\pi_1=\pi_2=\sigma$ is cuspidal (c.f. Theorem \ref{Theorem3}). Observe that the latter one is a special case of \cite{blomerspectral} where the $\GL_3$ cusp form is the symmetric square lift of $\sigma$.

\vspace{0.1cm}

For other configurations of the $\pi_i$'s, \eqref{PeriodExpansion} is just a spectral expansion of periods (c.f. \eqref{ReciprocityRelation} for the reciprocity relation of periods). However, we can still estimate this last quantity in great generality, even if it is not directly connected to $L$-functions (c.f. \eqref{FinalBound1}). This is quite satisfactory, especially when we have applications in minds.

\subsection{Statement of results}

In order to state properly the results, we fix $\pi_1,\pi_2$ two unitary automorphic representations of $\PGL_2(\Abb_\F)$ and let $\qmf$ be a squarefree ideal of $\Ocal_\F$ with norm $q$. We will assume most of the time that $\pi_1$ and $\pi_2$ satisfy the following hypothesis :
\begin{Hyp}\label{Hyp1}
\begin{enumerate}[$(1)$] 
\item $\pi_1$ and $\pi_2$ are unramified at the finite places \footnote{This condition could be easily removed, especially for Theorem \ref{Theorem2}. If $\pi_i$ has finite conductor $\textfrak{n}_i$, then the spectral expansion \eqref{DefinitionMoment1} should be over representations of conductor dividing $\textfrak{n}_1\textfrak{n}_2\qmf$.};
\item $\pi_1$ and $\pi_2$ are tempered at the finite places;
\item If one of the $\pi_i$'s is Eisenstein, then it is of the form $|\cdot|^{it}\boxplus |\cdot|^{-it}$ for some $t\in \Rbb$.
\end{enumerate}
\end{Hyp}

\subsubsection{An upper bound for the twisted moment with application to subconvexity}
Let $\lmf$ be an integral ideal of norm $\ell$. We define the following quantities : first the cuspidal part by
\begin{equation}\label{CuspidalPart}
\mathscr{C}(\pi_1,\pi_2,\qmf,\lmf):= C\sum_{\substack{\pi \ \mathrm{cuspidal} \\ \crm(\pi)|\qmf}}\lambda_\pi(\lmf)\frac{L(\pi\otimes\pi_1\otimes\pi_2,\tfrac{1}{2})}{\Lambda(\pi,\mathrm{Ad},1)}f(\pi_\infty)\Lcal(\pi,\qmf),
\end{equation}
where $C$ is a positive constant depending only on $\F$ and the nature of the three representations $\pi,\pi_1,\pi_2$ (c.f. \eqref{FactorizationLcal}-\eqref{ValueC}). The two other factors $f(\pi_\infty)$ and $\Lcal(\pi,\qmf)$ can be described as follows.
\begin{enumerate}
\item[(1)] The Archimedean test function $\pi\mapsto f(\pi_\infty)$ is non-negative and depends on the following datas (c.f. \eqref{Definitionfinfty} in Section \ref{SectionInterlude}) :
\begin{itemize}
\item[$\bullet$] The choice of test vectors $\varphi_{i,\infty}\in \pi_{i,\infty}$ for $i=1,2$;
\item[$\bullet$] The choice of an orthonormal basis $\Bscr(\pi_\infty)$.
\end{itemize}
\item[(2)] Under the identification $\pi=\otimes_v \pi_v$, the correction factor $\Lcal(\pi,\qmf)$ factorizes as $\prod_{v|\qmf}\Lcal(\pi_v,\qmf_v)$ with $\Lcal(\pi_v,\qmf_v)=1$ if $\crm(\pi_v)=1$ and $0<\Lcal(\pi_v,\qmf_v)\ll 1$ if $\crm(\pi_v)=0$ (c.f. \eqref{Lcal} and \eqref{ellv}), where the natural number $\crm(\pi_v)$ is the conductor of the local representation $\pi_v$, in the sense of \cite[Theorem 4.24]{adele}. 
\end{enumerate}
For the continuous part, we denote by $\pi_{\omega}(it)$ the principal series $\omega|\cdot|^{it}\boxplus \omegabar|\cdot|^{-it}$ and define similarly
\begin{equation}\label{ContinuousPart}
\begin{split}
\mathscr{E}(\pi_1,\pi_2,\qmf,\lmf):= C\sum_{\substack{\omega\in\widehat{\F^\times\setminus \Abb_\F^{1}} \\ \crm(\omega)=\Ocal_\F}}&\int_{-\infty}^\infty \lambda_{\pi_\omega(it)}(\lmf)f(\pi_{\omega_\infty}(it))\Lcal(\pi_\omega(it),\qmf) \\ \times & \frac{L(\pi_1\otimes\pi_2\otimes\omega,\tfrac{1}{2}+it)L(\pi_1\otimes\pi_2\otimes\omegabar,\tfrac{1}{2}-it)}{\Lambda^*(\pi_\omega(it),\mathrm{Ad},1)} \frac{dt}{4\pi},
\end{split}
\end{equation}
and
\begin{equation}\label{DefinitionMoment1}
\Mscr(\pi_1,\pi_2,\qmf,\lmf) =\mathscr{C}(\pi_1,\pi_2,\qmf,\lmf)+ \mathscr{E}(\pi_1,\pi_2,\qmf,\lmf).
\end{equation}
The first theorem establishes an upper bound for this twisted moment.
\begin{mainthm}\label{Theorem1}
Let $\pi_1,\pi_2$ be unitary automorphic representations of $\PGL_2(\Abb_\F)$ satisfying Hypothesis \ref{Hyp1}. Let $\qmf,\lmf$ be two coprime ideals of $\Ocal_\F$ with $\qmf$ squarefree and write $q$ and $\ell$ for their respective norms. Assuming that $q>\ell$, the moment \eqref{DefinitionMoment1} satisfies 
$$\Mscr(\pi_1,\pi_2,\qmf,\lmf) \ll_{\pi_1,\pi_2,\F,\varepsilon} (\ell q)^\varepsilon \left(\ell^{1/2}q^{1/2+\vartheta}+q\ell^{-1/2}\right).$$
\end{mainthm}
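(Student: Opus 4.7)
My plan is to reduce the estimation of $\Mscr(\pi_1,\pi_2,\qmf,\lmf)$ to bounding a concrete regularized period of automorphic forms, and then to expand this period spectrally in the ``dual'' direction, following the heuristic of \eqref{PeriodExpansion}.

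First, combining the Rankin--Selberg integral representation of $L(\pi\otimes\pi_1\otimes\pi_2,\tfrac{1}{2})$ with the Plancherel decomposition on $\Krm_0(\qmf)$-invariant vectors (and continuously for the Eisenstein part after Zagier--Michel--Venkatesh regularization), one identifies
$$\Mscr(\pi_1,\pi_2,\qmf,\lmf) \;\asymp\; \bigl\langle\varphi_1^{\lmf\qmf}\,\overline{\varphi_1^\qmf}\,,\,\varphi_2\,\overline{\varphi_2^\lmf}\bigr\rangle_\reg$$
up to the constant $C$, for test vectors $\varphi_i\in\pi_i$ unramified at the finite places and chosen so that the Archimedean Plancherel produces $f(\pi_\infty)$. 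The ramified local integrals at $v\mid\qmf$ give rise to $\Lcal(\pi,\qmf)$, while the Hecke eigenvalue $\lambda_\pi(\lmf)$ appears via the $\lmf$-translation on the left-hand factor. The regularization is required precisely in the Eisenstein cases of Hypothesis \ref{Hyp1}(3).

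Next, since $\varphi_1^{\lmf\qmf}\overline{\varphi_1^\qmf}$ is $\Krm_0(\lmf)$-invariant (the $\qmf$-translates cancelling in the pointwise product), applying Plancherel a second time expresses the inner product as a spectral sum over representations of conductor dividing $\lmf$, and a standard Hecke manipulation at the unramified places extracts the eigenvalue $\lambda_\pi(\qmf)$ with its $q^{-1/2}$ normalisation:
$$\Mscr \;\asymp\; \frac{1}{q^{1/2}}\sum_{\crm(\pi)\mid\lmf} \lambda_\pi(\qmf) \sum_{\psi\in\Bscr(\pi,\lmf)} \bigl\langle \varphi_1^\lmf\overline{\varphi_1}\,,\,\psi\bigr\rangle\bigl\langle \psi\,,\,\varphi_2\overline{\varphi_2^\lmf}\bigr\rangle \;+\; \mathrm{Reg}(\qmf,\lmf),$$
with $\mathrm{Reg}(\qmf,\lmf)$ collecting the regularization corrections.

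I would then bound the generic spectral sum by Cauchy--Schwarz in $(\pi,\psi)$ together with the pointwise Hecke bound $|\lambda_\pi(\qmf)|\ll_\varepsilon q^{\vartheta+\varepsilon}$, which is available thanks to Hypothesis \ref{Hyp1}(2), the squarefreeness of $\qmf$ and the coprimality of $\qmf$ and $\lmf$. Bessel's inequality reduces the two spectral sums to the $L^2$ norms $\|\varphi_1^\lmf\overline{\varphi_1}\|_2\,\|\varphi_2\overline{\varphi_2^\lmf}\|_2$, which an explicit local computation at the places dividing $\lmf$ shows to grow polynomially in $\ell$ and $q$; a careful bookkeeping of these factors produces the first term $\ell^{1/2}q^{1/2+\vartheta}(\ell q)^\varepsilon$. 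For the residual $\mathrm{Reg}(\qmf,\lmf)$, appearing only in the Eisenstein cases of Hypothesis \ref{Hyp1}(3), I would plug in the Michel--Venkatesh formulas for regularized inner products against constant terms of Hecke-translated Eisenstein series; under the assumption $q>\ell$ these contribute at most $q\ell^{-1/2}(\ell q)^\varepsilon$, giving the second term in the claimed bound.

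The main obstacle will be the setup step: establishing the period identity with the correct ramified local factors packaged as $\Lcal(\pi,\qmf)$, and making the regularization uniformly compatible with all Eisenstein configurations of Hypothesis \ref{Hyp1}(3). Closely related is the analysis of $\mathrm{Reg}(\qmf,\lmf)$ and of the $L^2$-norm growth of the products $\varphi_i^\lmf\overline{\varphi_i}$, both of which require a detailed local study at the places $v\mid\qmf\lmf$ using the matrix coefficients of the $\Krm_0(\bullet)$-new vectors.
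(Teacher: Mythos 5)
Your overall strategy coincides with the paper's: you pass from $\Mscr$ to a regularized period $\langle\Trm_\lmf(\varphi_1\varphi_2^\qmf),\varphi_1\varphi_2^\qmf\rangle_\reg$, regroup it à la \eqref{PeriodExpansion}, spectrally expand over representations of conductor dividing $\lmf$, and then apply Cauchy--Schwarz with the Hecke bound $|\lambda_\pi(\qmf)|\ll\tau(\qmf)q^{\vartheta}$ together with an $\ell^{\varepsilon}$ bound on the generic spectral mass (the paper's Proposition \ref{PropositionGeneric}), while controlling the non-generic contributions separately. This matches Sections \ref{SectionSymmetric}--\ref{SectionGeneric} and the proof given after \eqref{Scale}. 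However there are a few points in your sketch that are either imprecise or would lead you astray if taken literally.

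\begin{enumerate}[(1)]
\item You attribute the term $q\ell^{-1/2}$ exclusively to Eisenstein regularization, but this is incomplete. In the purely cuspidal case there is a one-dimensional (residual) contribution $\Cscr_2$ in the Plancherel decomposition --- coming from $\langle\varphi_1\varphi_2^\qmf,\varphi_\chi\rangle$ when $\pi_1\simeq\pi_2\otimes\chi$ for a quadratic character $\chi$ --- and it is precisely this term, computed in \eqref{ConstantC2}--\eqref{BoundConstantTerm}, that produces the $q\ell^{-1/2}(\ell q)^\varepsilon$ ``main term'' when both $\pi_i$ are cuspidal. Omitting the one-dimensional spectrum from the decomposition leaves a hole: after Cauchy--Schwarz the remaining generic expansion is only of size $q^{1/2+\vartheta}\ell^{1/2}$, which is smaller than $q\ell^{-1/2}$ for small $\ell$, so the one-dimensional term is not absorbed.
\item The claim ``$\varphi_1^{\lmf\qmf}\overline{\varphi_1^\qmf}$ is $\Krm_0(\lmf)$-invariant (the $\qmf$-translates cancelling in the pointwise product)'' is not correct: that product is $\Krm_0(\lmf\qmf)$-invariant, not $\Krm_0(\lmf)$-invariant. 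The right mechanism, used in the paper's \eqref{Symmetry1}, is not cancellation of translates but the Hecke identity \eqref{RelationHecke}: one expands $\Trm_\lmf$ to reach $\langle\varphi_1^{\pmf^{n-2k}\qmf}\varphi_2^{\pmf^{n-2k}},\varphi_1^\qmf\varphi_2\rangle_\reg$, then reads the $\qmf$-translation on the right side as an application of $\Trm_\qmf$, and only then expands over a basis of $\Krm_0(\pmf^{n-2k})$-invariant vectors, where $\lambda_\pi(\qmf)$ appears. Passing directly from the $\Krm_0(\lmf\qmf)$-invariant product to a $\Krm_0(\lmf)$-Plancherel expansion without this step is a genuine gap in the argument as written.
\item When at least one $\pi_i$ equals $1\boxplus1$ the regularized $L^4$-norm estimate behind the spectral mass bound cannot be obtained by a direct application of the regularized Plancherel formula: the exponent of $|\varphi_1|^2$ is $|\cdot|$, which is excluded by the hypotheses of Theorem \ref{TheoremParseval}. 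The paper copes with this via the deformation $\pi_i(s)$ (Section \ref{SectionDeformation}) followed by a Cauchy-estimate argument. ``After Zagier--Michel--Venkatesh regularization'' alone does not address this, and without the deformation step the Eisenstein contribution of Proposition \ref{PropositionGeneric} is not justified.
\item Your statement that the relevant $L^2$-norms ``grow polynomially in $\ell$ and $q$'' is misleading: $\varphi_i^{\lmf}\overline{\varphi}_i$ does not depend on $\qmf$ at all, and the content of Proposition \ref{PropositionL2norm}/Proposition \ref{PropositionGeneric} is precisely that the generic spectral mass is $O_\varepsilon(\ell^\varepsilon)$, not merely polynomial. The powers $\ell^{1/2}q^{1/2}$ in the final bound come from $\deg\Trm_\lmf\ll\ell^{1/2}$ and the rescaling $\Mscr=q\,\zeta_\qmf(1)\zeta_\qmf(2)^{-1}\Gscr_\qmf$ in \eqref{Scale}, not from the $L^2$-norms themselves.
\end{enumerate}

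In short, the architecture of your argument is the same as the paper's, but you need to (i) include and bound the one-dimensional (constant-function) spectrum, (ii) replace the ``translates cancel'' heuristic with the two-sided Hecke manipulation \eqref{Symmetry1}, and (iii) carry out the deformation argument to justify the Plancherel expansion and the $\ell^\varepsilon$ bound in the Eisenstein cases.
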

In this paper, the real number $\vartheta$ denotes an admissible exponent toward the Ramanujan-Petersson Conjecture for $\GL_2$ over $\F$; we have $0\leqslant \vartheta\leqslant \frac{7}{64}$ by \cite{ramanujan}. Combining Theorem \ref{Theorem1} with the amplification method, we obtain the following subconvexity bound for the triple product :
\begin{mainthm}\label{Theorem2}
Let $\F$ be a number field with ring of integers $\Ocal_\F$. Let $\qmf$ be a squarefree ideal of $\Ocal_\F$ of norm $q$ and $\pi$ a cuspidal automorphic representation of $\PGL_2(\Abb_\F)$ with finite conductor $\qmf$. Let $\pi_1,\pi_2$ be as in Theorem \ref{Theorem1} and assume moreover that they satisfy Hypothesis \ref{Hyp}. Then for any $\varepsilon>0$, we have the following subconvex estimate
\begin{equation}\label{SubConv1}
L\left( \pi\otimes\pi_1\otimes\pi_2,\tfrac{1}{2}\right) \ll_{\varepsilon,\F,\pi_1,\pi_2,\pi_\infty} q^{1-\frac{1-2\vartheta}{6}+\varepsilon}.
\end{equation}
In particular, choosing $\pi_2=|\cdot|^{it}\boxplus |\cdot|^{-it}$ in $(a)-(b)$ and $\pi_1=1\boxplus 1$ in $(b)$ below, Hypothesis \ref{Hyp} is automatically satisfied and we obtain
\begin{enumerate}
\item[$(a)$] For $t\in\Rbb$ and $\pi_1$ cuspidal,
\begin{equation}\label{SubConv2}
L\left(\pi\otimes \pi_1,\tfrac{1}{2}+it\right) \ll_{\varepsilon,\F,\pi_1,\pi_\infty,t} q^{\frac{1}{2}-\frac{1-2\vartheta}{12}+\varepsilon}.
\end{equation}
\item[$(b)$] For any $t\in \Rbb$,
\begin{equation}\label{SubConv3}
L\left(\pi,\tfrac{1}{2}+it\right)\ll_{\varepsilon,\F,\pi_\infty,t}q^{\frac{1}{4}-\frac{1-2\vartheta}{24}+\varepsilon}.
\end{equation}
\end{enumerate}
\end{mainthm}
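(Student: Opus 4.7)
The plan is to derive \eqref{SubConv1} by amplification against Theorem \ref{Theorem1}, and then to deduce \eqref{SubConv2} and \eqref{SubConv3} from it by factoring the triple product when one (or both) of the $\pi_i$ degenerates to a unitary Eisenstein series. When $\pi_2 = |\cdot|^{it}\boxplus|\cdot|^{-it}$, Rankin--Selberg factorization yields
\[
L(\pi\otimes\pi_1\otimes\pi_2,s) = L(\pi\otimes\pi_1,s+it)\,L(\pi\otimes\pi_1,s-it),
\]
which at $s=\tfrac{1}{2}$ equals $|L(\pi\otimes\pi_1,\tfrac{1}{2}+it)|^2$ by self-duality of $\pi\otimes\pi_1$; taking a square root of \eqref{SubConv1} then produces \eqref{SubConv2}. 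Similarly, for $\pi_1=\pi_2=1\boxplus 1$ one obtains $|L(\pi,\tfrac{1}{2}+it)|^4$ (with appropriate shifts), and \eqref{SubConv3} follows by extracting a fourth root.

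For \eqref{SubConv1} itself I would fix the target $\pi_0=\pi$ of conductor $\qmf$, pick a parameter $L\in(1,q^{1/2})$, and let $\Pscr$ be the set of prime ideals $\pmf$ of $\Ocal_\F$ coprime to $\qmf$ with $N\pmf\asymp L$. Set
\[
A(\pi) = \left|\sum_{\pmf\in\Pscr} x_\pmf\,\lambda_\pi(\pmf)\right|^2 \geq 0,
\]
with $x_\pmf\in\{-1,0,+1\}$ supported on those primes at which $|\lambda_{\pi_0}(\pmf)|\geq 1/2$ (a positive density subset, by the Rankin--Selberg asymptotics for $\sum_\pmf|\lambda_{\pi_0}(\pmf)|^2$) and set to $\mathrm{sgn}(\lambda_{\pi_0}(\pmf))$ there, so that $A(\pi_0)\gg L^{2-\varepsilon}$. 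Under Hypothesis \ref{Hyp}, every cuspidal central value $L(\pi\otimes\pi_1\otimes\pi_2,\tfrac{1}{2})\geq 0$ (by Lapid's non-negativity theorem for self-dual central $L$-values with sign of functional equation equal to $+1$), while the continuous contribution \eqref{ContinuousPart} is built from the manifestly non-negative quantities $|L(\pi_1\otimes\pi_2\otimes\omega,\tfrac{1}{2}+it)|^2$. Since $L(\pi,\mathrm{Ad},1)>0$, $f(\pi_\infty)\geq 0$ and $\Lcal(\pi,\qmf)>0$, positivity reduces the task to controlling the $A$-weighted moment.

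To bound that moment, I expand $A(\pi)=\sum_{\pmf_1,\pmf_2\in\Pscr}x_{\pmf_1}\overline{x_{\pmf_2}}\lambda_\pi(\pmf_1)\lambda_\pi(\pmf_2)$ and invoke Hecke multiplicativity $\lambda_\pi(\pmf_1)\lambda_\pi(\pmf_2)=\lambda_\pi(\pmf_1\pmf_2)+\delta_{\pmf_1=\pmf_2}$; every ideal $\lmf=\pmf_1\pmf_2$ thus produced has norm $\ll L^2<q$, so Theorem \ref{Theorem1} applies uniformly to each inner sum and yields
\[
\sum_\pi \frac{L(\pi\otimes\pi_1\otimes\pi_2,\tfrac{1}{2})}{L(\pi,\mathrm{Ad},1)}\,f(\pi_\infty)\,\Lcal(\pi,\qmf)\,A(\pi) \ +\ (\text{cont.}) \ \ll\ q^\varepsilon\, L^2 \bigl(L q^{1/2+\vartheta} + q L^{-1}\bigr).
\]
Retaining only $\pi=\pi_0$ on the spectral side and dividing by $A(\pi_0)\gg L^{2-\varepsilon}$ gives $L(\pi_0\otimes\pi_1\otimes\pi_2,\tfrac{1}{2})\ll q^\varepsilon(Lq^{1/2+\vartheta}+qL^{-1})$, and an appropriate choice of $L$ balancing the two terms delivers the exponent in \eqref{SubConv1}.

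\textbf{Main obstacle.} The critical technical point is securing the non-negativity of $L(\pi\otimes\pi_1\otimes\pi_2,\tfrac{1}{2})$ across the whole cuspidal spectrum of conductor dividing $\qmf$: without it, single-term extraction via the amplifier cannot be performed. This is precisely the role of Hypothesis \ref{Hyp}, coupled with the structural observation from Section \ref{Section1} that the moment $\Mscr$, stripped of the root number $\varepsilon_\pi(\qmf)$, presents naturally as a non-negative spectral sum. The remaining optimization of $L$ is routine, the constraint $q>\ell$ in Theorem \ref{Theorem1} being harmless since $L^2<q$ by construction.
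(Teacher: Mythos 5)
Your overall strategy — amplify against Theorem \ref{Theorem1}, use positivity to extract a single term, balance $L$ — is the right one, and your reduction from \eqref{SubConv1} to \eqref{SubConv2} and \eqref{SubConv3} via the Rankin--Selberg factorization of the triple product matches what the paper intends. However there are two substantive issues with the amplification step.

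First, the amplifier. You use the linear amplifier $\left|\sum_{\pmf}x_\pmf\lambda_\pi(\pmf)\right|^2$ with $x_\pmf$ supported on primes where $|\lambda_{\pi_0}(\pmf)|\geq 1/2$, asserting positive density of such primes via Rankin--Selberg asymptotics. Without the Ramanujan--Petersson conjecture this does \emph{not} follow: $\sum_{N\pmf\leq X}|\lambda_{\pi_0}(\pmf)|^2 \sim \pi(X)$ is compatible with the set $\{|\lambda_{\pi_0}(\pmf)|\geq 1/2\}$ having density zero, because a sparse set of primes with large $|\lambda_{\pi_0}(\pmf)|\sim q_\pmf^{\vartheta}$ could dominate the sum. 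The paper deliberately sidesteps this by using the Blomer--Venkatesh quadratic amplifier $\Acal(\pi)=\left(\sum_\pmf \lambda_\pi(\pmf)\bm{x}(\pmf)\right)^2+\left(\sum_\pmf \lambda_\pi(\pmf^2)\bm{x}(\pmf^2)\right)^2$, for which the Hecke relation $\lambda_{\pi_0}(\pmf)^2 = 1 + \lambda_{\pi_0}(\pmf^2)$ forces $|\lambda_{\pi_0}(\pmf)|+|\lambda_{\pi_0}(\pmf^2)|\geq 1$ at \emph{every} prime, giving $\Acal(\pi_0)\gg L^2(\log L)^{-2}$ unconditionally. This choice is not cosmetic: it is the reason the paper applies Theorem \ref{Theorem1} with $\lmf=\pmf_1^2\pmf_2^2$ (hence the constraint $L<q^{1/4}$) in addition to $\lmf=\pmf_1\pmf_2$, and it is where the exponent is actually determined.

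Second, and this is a red flag you should have caught, your own optimization does not produce the target exponent. Your expansion of the linear amplifier leads to $L(\pi_0\otimes\pi_1\otimes\pi_2,\tfrac12)\ll q^\varepsilon\left(L q^{1/2+\vartheta}+qL^{-1}\right)$; balancing at $L=q^{(1-2\vartheta)/4}$ gives $q^{1-\frac{1-2\vartheta}{4}+\varepsilon}$, which is \emph{stronger} than \eqref{SubConv1}. The paper's quadratic amplifier instead yields the weaker $q^\varepsilon\left(L^2 q^{1/2+\vartheta}+qL^{-1}\right)$ (the quadratic Hecke terms $\pmf_1^2\pmf_2^2$ have norm $\asymp L^4$), balanced at $L=q^{(1-2\vartheta)/6}$ to give precisely \eqref{SubConv1}. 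That you cheaply obtain a better exponent should signal that something unjustified crept in; here it is exactly the unproven density claim.

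Finally, two smaller points: the non-negativity of the cuspidal $L$-values is automatic from the period identity \eqref{FactorizationLcal} (the left side $\Lscr(\pi,\qmf)$ is a sum of $|\cdot|^2$'s and the local factors are positive), so it does not depend on Hypothesis \ref{Hyp}; the actual role of Hypothesis \ref{Hyp}, which you do not mention, is to supply the lower bound \eqref{LowerBound} for $f(\pi_\infty)$ and to combine with \eqref{BoundAdjoint} for $\Lambda(\pi_0,\mathrm{Ad},1)$ so that one can divide out $f(\pi_{0,\infty})/\Lambda(\pi_0,\mathrm{Ad},1)$ at the last step and obtain a bound for $L(\pi_0\otimes\pi_1\otimes\pi_2,\tfrac12)$ itself, depending only on $\pi_\infty$ and not on a choice of archimedean test vectors.
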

\begin{remq}We emphasizes that Theorems \ref{Theorem1} and \ref{Theorem2} contain the same numerology of exponents as \cite{blomerspectral}, but the factorization of the degree $8$ $L$-function is different. It coincides in the special case of the fourth moment, i.e. when $\pi_1=\pi_2=1\boxplus 1$.
\end{remq}
\begin{remq} We would like to mention some earlier results concerning the subconvexity problem in the level aspect for $L(\pi\otimes\pi_1\otimes\pi_2,\tfrac{1}{2})$.
\begin{enumerate}[(1)]
\item 
In the case where both $\pi_1$ and $\pi_2$ are two fixed cuspidal representations with finite conductor $\textfrak{n}_1$ and $\textfrak{n}_2$, Venkatesh’s work \cite{sparse} together with Woodbury’s work on local integrals in \cite{wood} can be used to derive the subconvexity bound 
\begin{equation}\label{1/12}
L(\pi\otimes\pi_1\otimes\pi_2,\tfrac{1}{2})\ll q^{1-\tfrac{1}{12}},
\end{equation}
for $\pi$ cuspidal with squarefree finite conductor $\qmf$ which is prime with $\textfrak{n}_1\textfrak{n}_2$. This result is based on the condition that a certain local epsilon factor $\varepsilon_v$ associated to $\pi_v\otimes\pi_{1,v}\otimes \pi_{2,v}$ is equal to $1$ for every place $v$. In \cite{hu}, Hu removed these local conditions and extended the bound \eqref{1/12} to arbitray level $\qmf$ not necessarily prime with $\textfrak{n}_1\textfrak{n}_2$, and with arbitrary central characters. Our subconvex bound \eqref{SubConv1} improves therefore on the exponent $\tfrac{1}{12}$, but conditionally on Hypothesis \ref{Hyp1} and \ref{Hyp}.
\item Even over the field of rational numbers $\Qbb$, the exponent \eqref{SubConv2} is new, but it works if $\pi_1$ is tempered at the finite places and both $\pi,\pi_1$ have trivial central character, which is for example the case in \cite{michel0} since the fixed representation corresponds to an holomorphic cusp form; the authors obtained the exponent $\tfrac{1}{2}-\tfrac{1}{80}$. For $\pi_1$ holomorphic, we can cite also \cite{michel1,holo} where this time the product of the central characters $\omega_{\pi}\omega_{\pi_1}$ is not trivial. The holomorphicity for $\pi_1$ has been removed in \cite{michel2} with an exponent of $\tfrac{1}{2}-\tfrac{1}{2648}$. Finally, for a general number field $\F$, we mention the work of Michel and Venkatesh \cite{sparse,subconvexity} where the problem was solved in great generality.
\item The bound \eqref{SubConv3} is an extension of \cite[Theorem 4]{blomerspectral} to an arbitrary number field, the previous record being due to H. Wu with $\tfrac{1}{4}-\tfrac{1-2\vartheta}{32}$ \cite[Corollary 1.6]{explicit}. However, when the ground field is $\F=\Qbb$, the exponent has been improved to $\tfrac{1}{5}+\tfrac{\vartheta}{15}$ \cite[Corollary 3]{moto}.
\end{enumerate}
\end{remq}
\begin{remq} The bound in Theorem \ref{Theorem1} depends in fact also on the choice of test vectors at the Archimedean places $\varphi_{i,v}\in\pi_{i,v}$, $i=1,2$ that we use to connect the moment \eqref{DefinitionMoment1} to a global period of the form \eqref{Type1}. However, we can fix them by choosing for each $v|\infty$ the minimal weight vector $\varphi_{i,v}\in\pi_{i,v}$. This dependence no longer appears in Theorem \ref{Theorem2} because here the $\varphi_{i,v}$'s are completely determined by $\pi_\infty$ for a given $\pi$ (c.f. Section \ref{SectionInterlude} and \ref{SectionSub}).
\end{remq}

\subsubsection{Reciprocity and simultaneous non-vanishing for the symmetric square} Assume that $\pi_1=\pi_2=\sigma$ is either cuspidal or the standard Eisenstein series $1\boxplus 1$. As announced at the end of Section \ref{Section1}, we can obtain a reciprocity relation in this case (up to a main term). To do this, we make the additional assumption that $\lmf$ is squarefree (of course coprime with $\qmf$) and we set $\Mscr(\sigma,\sigma,\qmf,\lmf)=: \Mscr(\sigma,\qmf,\lmf)$ (recall Definition \eqref{DefinitionMoment1}) and
$$\widetilde{\mathscr{M}}(\sigma,\lmf,\qmf) := \widetilde{\Cscr}(\sigma,\lmf,\qmf)+\widetilde{\mathscr{E}}(\sigma,\lmf,\qmf),$$
where $\widetilde{\Mscr}$ is the same as $\Mscr$, but with $f(\pi_\infty)$ and $\Lcal(\pi,\lmf)$ replaced by other weight functions $\widetilde{f}$ and $\widetilde{\Lcal}$ (c.f. Section \ref{SectionProof34}). Observe that in this special case, the triple product $L$-function admits the following factorization \cite[eq. (1.5)]{nelson}
$$
L(\pi\otimes \sigma\otimes\sigma,\tfrac{1}{2})= \left\{ \begin{array}{lcl}
L(\pi,\tfrac{1}{2})^4 & \ifm & \sigma=1\boxplus 1 \\ 
 & & \\ 
 L(\mathrm{Sym}^2(\sigma)\otimes \pi,\tfrac{1}{2})L(\pi,\tfrac{1}{2}) & \ifm & \sigma \ \mathrm{is \ cuspidal}.
\end{array}
\right.
$$
\begin{mainthm}\label{Theorem3} Let $\sigma$ be a unitary automorphic representation of $\PGL_2(\Abb_\F)$ which is unramified at the finite places and let $\qmf,\lmf$ two squarefree and coprime ideals with respective norm $q$ and $\ell$. Assume that $\ell<q$; then we have the reciprocity relation
$$\Mscr(\sigma,\qmf,\lmf)=\left( \frac{q}{\ell}\right)^{1/2}\widetilde{\Mscr}(\sigma,\lmf,\qmf)+  \mathrm{MT}(\sigma,\qmf,\lmf),$$
where the main term $\mathrm{MT}(\sigma,\qmf,\pmf)$ satisfies
$$\mathrm{MT}(\sigma,\qmf,\lmf)=\left\{ \begin{array}{lcl} \mathscr{S}(\sigma,\qmf,\lmf)+O_{\sigma,\F,\varepsilon}\left(q^{2\vartheta_{\sigma}+\varepsilon} \ell^{1/2}\right)& \mathrm{if} & \sigma \ \mathrm{is \ cuspidal} \\ & & \\O_{\sigma,\F,\varepsilon}\left(q^{1+\varepsilon}\ell^{-1/2} \right) & \mathrm{if} & \sigma \ \mathrm{is \ Eisenstein}, \end{array}\right.$$
with
$$\mathscr{S}(\sigma,\qmf,\lmf) :=\frac{4q\Delta_\F\Lambda(\sigma,\mathrm{Ad},1)^2\zeta_{\qmf}(1)}{V_\F \zeta_{\qmf}(2)}\times  \frac{\lambda_{\sigma}(\lmf)^2\zeta_{\lmf}(2)}{\ell^{1/2}\zeta_{\lmf}(1)}.$$
Here $V_\F$ and $\Delta_\F$ are constant associated to $\F$ defined in Section $\ref{SectionNotations}$, $\Lambda(\sigma,\mathrm{Ad},1)$ is the complete adjoint $L$-function of $\sigma$ and for any ideal $\amf$ and complex $s$, the partial zeta function $\zeta_\amf(s)$ is given by $\prod_{v|\amf}\zeta_{\F_v}(s)$.
\end{mainthm}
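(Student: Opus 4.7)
The plan is to evaluate a single regularized period in two different ways and identify the difference of the two spectral expansions as the stated main term. Let $\varphi\in\sigma$ be a pure tensor, unramified at the finite places, and consider
$$\Pcal(\lmf,\qmf) := \left\langle \varphi^{\lmf\qmf}\varphi^{\lmf},\,\varphi^{\qmf}\varphi\right\rangle_{\mathrm{reg}}.$$
When $\sigma$ is cuspidal this is an ordinary inner product; when $\sigma = 1\boxplus 1$ one interprets it through the Zagier--Michel--Venkatesh regularization. A first spectral expansion is obtained by inserting a Plancherel decomposition against the right factor $\varphi^{\qmf}\varphi$, which is $\Krm_0(\qmf)$-invariant, so that only representations $\pi$ with $\crm(\pi)\mid\qmf$ contribute. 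The triple product formula recalled in Section \ref{SectionRankin} produces $L(\pi\otimes\sigma\otimes\sigma,\tfrac{1}{2})/\Lambda(\pi,\mathrm{Ad},1)$, the right translates by $\SmallmatrixRien$ at $v\mid\lmf$ produce the Hecke eigenvalue $\lambda_\pi(\lmf)$, and the local weight computations at $v\mid\qmf$ produce $\Lcal(\pi,\qmf)$. Assembling everything, this first expansion equals $\Mscr(\sigma,\qmf,\lmf)$ up to contributions from the singular spectrum.

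For the second expansion, regroup the factors as
$$\Pcal(\lmf,\qmf) = \left\langle \varphi^{\lmf\qmf}\overline{\varphi}^{\qmf},\,\varphi\overline{\varphi}^{\lmf}\right\rangle_{\mathrm{reg}},$$
which is legitimate because $\sigma$ is self-contragredient and because the regularization is compatible with this rearrangement. Now the right factor is $\Krm_0(\lmf)$-invariant, so Plancherel gives a sum over $\pi$ with $\crm(\pi)\mid\lmf$; the roles of $\qmf$ and $\lmf$ are swapped, and a comparison of global normalizations of local newvectors produces the overall factor $(q/\ell)^{1/2}$. The modified weights $\widetilde{f}(\pi_\infty)$ and $\widetilde{\Lcal}(\pi,\qmf)$ arise because in this regrouping the test vectors on the two sides of $\pi$ no longer coincide at $v\mid\qmf$ or at $v\mid\infty$; this is the asymmetry already anticipated in Section \ref{Section1} and encoded in \eqref{PeriodExpansion}. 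Reading off the result gives $(q/\ell)^{1/2}\widetilde{\Mscr}(\sigma,\lmf,\qmf)$ plus singular contributions.

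Equating the two expressions yields $\Mscr = (q/\ell)^{1/2}\widetilde{\Mscr} + \mathrm{MT}$, where $\mathrm{MT}$ collects the contributions of the singular spectrum not counted by either moment. When $\sigma$ is cuspidal, the dominant piece of $\mathrm{MT}$ is the contribution of the trivial (constant) representation to the second expansion,
$$\frac{1}{\mathrm{vol}(\GFGA)}\left\langle \varphi^{\lmf\qmf}\overline{\varphi}^{\qmf},\,1\right\rangle\left\langle 1,\,\varphi\overline{\varphi}^{\lmf}\right\rangle.$$
Evaluating the two brackets via the Petersson identity $\|\varphi\|^2 = (V_\F/\Delta_\F)\,\Lambda(\sigma,\mathrm{Ad},1)$ together with the local translate identities at $v\mid\qmf$ and $v\mid\lmf$ (which produce the factors $\zeta_\qmf(1)/\zeta_\qmf(2)$ and $\zeta_\lmf(2)/\zeta_\lmf(1)$, respectively, together with a factor $q$ from the $\qmf$-translate and a factor $\lambda_\sigma(\lmf)^2/\ell^{1/2}$ from the $\lmf$-translate) reproduces exactly $\mathscr{S}(\sigma,\qmf,\lmf)$. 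The remaining residual representations (other one-dimensional characters) are bounded by Cauchy--Schwarz together with $\lambda_\sigma(\qmf)\ll q^{\vartheta_\sigma+\varepsilon}$, giving $O(q^{2\vartheta_\sigma+\varepsilon}\ell^{1/2})$. When $\sigma=1\boxplus 1$, no $\mathscr{S}$-term survives; instead $\mathrm{MT}$ consists entirely of the boundary terms of the regularized Plancherel formula, which a direct estimation on truncated Eisenstein series bounds by $O(q^{1+\varepsilon}\ell^{-1/2})$.

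The principal obstacle is the Eisenstein case: none of the four integrands in $\Pcal$ lies in $L^2$, so both spectral expansions must be carried out via the regularized Plancherel formula of \cite{subconvexity}, with boundary and residual contributions tracked on each side. One then needs to verify that these regularizing contributions from the two expansions cancel up to the stated error $O(q^{1+\varepsilon}\ell^{-1/2})$. The asymmetry between this bound and the cuspidal error is a direct consequence of the second grouping of $\Pcal$ breaking the perfect $(\qmf,\lmf)$-symmetry of the unregularized period, so that the regularization weights on the two sides no longer match as neatly as in the symmetric situation \eqref{Equ1}.
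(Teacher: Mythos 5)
Your proposal follows essentially the same route as the paper: your period $\langle\varphi^{\lmf\qmf}\varphi^{\lmf},\varphi^{\qmf}\varphi\rangle_{\reg}$ coincides, for squarefree $\lmf$ up to an explicit $\zeta_\lmf$-factor, with the paper's $\langle\Trm_{\lmf}(\varphi\varphi^{\qmf}),\varphi\varphi^{\qmf}\rangle_{\reg}$ from \eqref{ThePeriod}, and both arguments run the regularized Plancherel formula at level $\qmf$ and then, after the same regrouping of factors, at level $\lmf$, with the one-dimensional and degenerate contributions supplying $\mathrm{MT}$ — exactly the content of \eqref{Symmetry1}, \eqref{ReciprocityRelation} and \eqref{ValueC2}. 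One small inaccuracy to note: in the cuspidal case the error $O(q^{2\vartheta_\sigma+\varepsilon}\ell^{1/2})$ comes from the trivial-character (constant) contribution $\Cscr_1$ to the \emph{first} expansion at level $\qmf$ (see \eqref{q-error} and Remark \ref{RemarkNonTempered}), not from ``other one-dimensional characters'' in the second expansion — for unramified $\sigma$ with trivial central character only $\chi=1$ contributes to the one-dimensional spectrum on either side.
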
 
As a consequence of Theorem \ref{Theorem3}, we obtain the following asymptotic expansion and a simultaneous non-vanishing result (see also \cite{simult}).

\begin{mainthm}\label{Theorem4} Let $\sigma$ be a cuspidal automorphic representation of $\PGL_2(\Abb_\F)$ unramified at the finite places. Let $\qmf$ be a prime ideal of $\Ocal_\F$ of norm $q$. Then for any $\varepsilon>0$, we have the asymptotic formula
$$\frac{1}{q}\sum_{\substack{\pi \ \mathrm{cuspidal} \\ \crm(\pi)=\qmf}} \frac{L\left(\mathrm{Sym}^2(\sigma)\otimes \pi,\tfrac{1}{2}\right)L(\pi,\tfrac{1}{2})}{\Lambda(\pi,\mathrm{Ad},1)}f(\pi_\infty)= \frac{2 \Delta_\F \Lambda(\sigma,\mathrm{Ad},1)^2}{\Lambda_\F(2)V_\F}+O_{\sigma,\F,\varepsilon}\left(q^{-\tfrac{1}{2}+\vartheta+\varepsilon}\right).$$
\end{mainthm}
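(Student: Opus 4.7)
The proof is obtained by specializing the reciprocity formula of Theorem \ref{Theorem3} to $\lmf=\Ocal_\F$ (so $\ell=1$) and isolating the conductor-$\qmf$ newform contribution on the left-hand side. Since $\qmf$ is prime and $\ell=1<q$, the hypotheses of Theorem \ref{Theorem3} are satisfied, yielding
\begin{equation*}
\Mscr(\sigma,\qmf,\Ocal_\F)=q^{1/2}\widetilde{\Mscr}(\sigma,\Ocal_\F,\qmf)+\mathscr{S}(\sigma,\qmf,\Ocal_\F)+O_{\sigma,\F,\varepsilon}(q^{2\vartheta_\sigma+\varepsilon}).
\end{equation*}

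I would then bound the dual moment $\widetilde{\Mscr}(\sigma,\Ocal_\F,\qmf)$ directly. Since the twisting level is trivial, the spectral expansion runs over automorphic representations unramified at every finite place, a family that is independent of $q$. On the cuspidal part, $|\lambda_\pi(\qmf)|\ll q^{\vartheta}$ by the admissible bound toward Ramanujan-Petersson, while $L(\pi\otimes\sigma\otimes\sigma,\tfrac{1}{2})/\Lambda(\pi,\mathrm{Ad},1)$ is bounded by convexity. On the Eisenstein part, $|\lambda_{\pi_\omega(it)}(\qmf)|=|\omega(\varpi_\qmf)+\overline{\omega}(\varpi_\qmf)|\leq 2$ and the $t$-integral converges absolutely thanks to the Archimedean decay of $\widetilde{f}$. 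Altogether $\widetilde{\Mscr}(\sigma,\Ocal_\F,\qmf)\ll_{\sigma,\F,\varepsilon}q^{\vartheta+\varepsilon}$, so the dual contribution in the reciprocity formula is $\ll_{\sigma,\F,\varepsilon}q^{1/2+\vartheta+\varepsilon}$.

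Next, on the left-hand side I would split $\Mscr(\sigma,\qmf,\Ocal_\F)$ according to the conductor of $\pi$: since $\qmf$ is prime, the cuspidal part decomposes as $\crm(\pi)=\qmf$ (the desired newform piece, with $\Lcal(\pi,\qmf)=1$) plus $\crm(\pi)=\Ocal_\F$ (the oldform piece, with $\Lcal(\pi,\qmf)\ll 1$). The oldform sum and the Eisenstein part are both $O_{\sigma,\F}(1)$ uniformly in $q$, by the same kind of bounds used for the dual moment. Using the factorization $L(\pi\otimes\sigma\otimes\sigma,\tfrac{1}{2})=L(\mathrm{Sym}^2(\sigma)\otimes\pi,\tfrac{1}{2})L(\pi,\tfrac{1}{2})$ and dividing through by $Cq$ converts the conductor-$\qmf$ cuspidal piece into the left-hand side of Theorem \ref{Theorem4}. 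A direct computation of $\mathscr{S}(\sigma,\qmf,\Ocal_\F)/(Cq)$, using $\zeta_\qmf(1)/\zeta_\qmf(2)=1+q^{-1}$ together with the explicit value of $C$ from \eqref{FactorizationLcal}--\eqref{ValueC}, then produces the leading coefficient $2\Delta_\F\Lambda(\sigma,\mathrm{Ad},1)^2/(\Lambda_\F(2)V_\F)+O(q^{-1})$. Combining error terms, the dual contribution $O(q^{-1/2+\vartheta+\varepsilon})$ dominates the remainder $O(q^{-1+2\vartheta_\sigma+\varepsilon})$ from Theorem \ref{Theorem3} and the $O(q^{-1})$ piece from the oldform/Eisenstein residues, yielding the stated asymptotic.

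The main obstacle is the last step: correctly tracking the normalizing constant $C$ and the local zeta factors so that $\mathscr{S}(\sigma,\qmf,\Ocal_\F)/(Cq)$ simplifies to exactly the clean leading coefficient stated, and simultaneously controlling the oldform and Eisenstein residues uniformly as $q\to\infty$. This is essentially a bookkeeping problem about local factors, but it is where the structure of the moment interacts most subtly with the particular normalization chosen in \eqref{FactorizationLcal}--\eqref{ValueC}.
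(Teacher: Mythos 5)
Your proposal follows essentially the same route as the paper's proof: specialize Theorem \ref{Theorem3} at $\lmf=1$, bound the dual moment by $q^{\vartheta+\varepsilon}$, isolate the conductor-$\qmf$ cuspidal piece of $\Mscr(\sigma,\qmf,1)$, and evaluate $\mathscr{S}(\sigma,\qmf,1)$. Two small remarks: the paper bounds the dual moment not by convexity but by the period-positivity inequality $|\widetilde{\Mscr}(\sigma,1,\qmf)|\leqslant q^{\vartheta}\left(\|\varphi\|_{L^4}^4-\|\varphi\|_{L^2}^4\right)$, which is cleaner; and your claim that the unramified/Eisenstein residue $\mathcal{U}(\sigma)$ is $O_{\sigma,\F}(1)$ is only exact if $\sigma$ is tempered at finite places --- the paper records it as $O(q^{2\vartheta_\sigma})$ via \eqref{kappav} since Theorem \ref{Theorem4} does not assume Hypothesis \ref{Hyp1}. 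Either way $q^{-1}\mathcal{U}(\sigma)\ll q^{-1+2\vartheta_\sigma}$ is dominated by $q^{-1/2+\vartheta}$, so the stated error term is unaffected.
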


\begin{cor}\label{Corollary5}
Let $\sigma$ cuspidal satisfying Hypothesis \ref{Hyp1} and \ref{Hyp}, $\qmf$ a prime ideal of norm $q$ and let $\varepsilon>0$. There exists a positive constant $\lambda=\lambda(\sigma,\F,\varepsilon)$ such that for $q$ large enough in terms of $\sigma$ and $\varepsilon$, we have
$$\left| \left\{ \begin{matrix}  \pi \ \mathrm{cuspidal} \\ \crm(\pi)=\qmf \end{matrix} \ : \ \begin{matrix} L\left(\mathrm{Sym}^2(\sigma)\otimes \pi,\tfrac{1}{2}\right)\neq 0 \\ L(\pi,\tfrac{1}{2})\neq 0 \end{matrix}  \right\} \right| \geqslant \lambda q^{\frac{1-2\vartheta}{6}-\varepsilon}.$$
\end{cor}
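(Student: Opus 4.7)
The plan is to deduce Corollary \ref{Corollary5} from Theorem \ref{Theorem4} (first moment asymptotic) and Theorem \ref{Theorem2} (pointwise subconvex bound) via a standard Cauchy--Schwarz argument exploiting the positivity of the central triple product value. Write
$$L_1(\pi):=L(\mathrm{Sym}^2(\sigma)\otimes \pi,\tfrac{1}{2}),\qquad L_2(\pi):=L(\pi,\tfrac{1}{2}),\qquad a_\pi := \frac{f(\pi_\infty)}{\Lambda(\pi,\mathrm{Ad},1)},$$
and let $N$ denote the cardinality to be bounded from below. The key structural input is the factorization $L(\pi\otimes\sigma\otimes\sigma,\tfrac{1}{2})=L_1(\pi)L_2(\pi)$ recalled before Theorem \ref{Theorem3} together with the non-negativity of central values of triple product $L$-functions (via Ichino's period formula, or Lapid's theorem): this forces $L_1(\pi)L_2(\pi)\geq 0$, so $L_1(\pi)L_2(\pi)\neq 0$ if and only if both factors are non-zero, and it suffices to bound the number of cuspidal $\pi$ with $\crm(\pi)=\qmf$ for which the product is non-zero. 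Moreover $a_\pi\geq 0$ since $f(\pi_\infty)\geq 0$ by construction and $\Lambda(\pi,\mathrm{Ad},1)>0$.

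Cauchy--Schwarz then yields
$$\Bigl(\sum_{\substack{\pi \text{ cuspidal}\\ \crm(\pi)=\qmf}}a_\pi L_1(\pi)L_2(\pi)\Bigr)^2\leq N\cdot \sum_{\substack{\pi \text{ cuspidal}\\ \crm(\pi)=\qmf}}a_\pi^2 L_1(\pi)^2 L_2(\pi)^2.$$
By Theorem \ref{Theorem4}, the inner sum on the left equals $cq+O_{\sigma,\F,\varepsilon}(q^{1/2+\vartheta+\varepsilon})$ with explicit positive constant $c=2\Delta_\F\Lambda(\sigma,\mathrm{Ad},1)^2/(\Lambda_\F(2)V_\F)$, hence is $\gg q$ for $q$ large and the left-hand side is $\gg q^2$. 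For the right-hand side, Theorem \ref{Theorem2} applied to $\pi_1=\pi_2=\sigma$ gives the pointwise bound
$L_1(\pi)L_2(\pi)=L(\pi\otimes\sigma\otimes\sigma,\tfrac{1}{2})\ll_\varepsilon q^{1-(1-2\vartheta)/6+\varepsilon}$, while the standard lower bound $\Lambda(\pi,\mathrm{Ad},1)\gg q^{-\varepsilon}$ combined with the fact that $f(\pi_\infty)$ is fixed independently of $q$ yields $a_\pi\ll q^\varepsilon$. Substituting these two bounds into one factor $a_\pi L_1(\pi)L_2(\pi)$ of the squared summand, and appealing to Theorem \ref{Theorem4} a second time to handle the remaining first moment, we obtain
$$\sum_{\pi}a_\pi^2 L_1(\pi)^2 L_2(\pi)^2\ll q^{1-(1-2\vartheta)/6+\varepsilon}\sum_\pi a_\pi L_1(\pi)L_2(\pi)\ll q^{2-(1-2\vartheta)/6+\varepsilon}.$$

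Dividing produces $N\gg q^{(1-2\vartheta)/6-\varepsilon}$, which is the claimed lower bound. The main obstacle is the positivity input: without the non-negativity of $L_1(\pi)L_2(\pi)$, the main term of Theorem \ref{Theorem4} could not be exploited as a genuine lower bound on the first moment and the entire Cauchy--Schwarz step would collapse. The remaining ingredients---the pointwise subconvex bound, the standard convex estimates $\Lambda(\pi,\mathrm{Ad},1)^{\pm 1}\ll q^\varepsilon$, and the reuse of the first moment to absorb the pointwise factor---are routine, provided $\sigma$ satisfies Hypotheses \ref{Hyp1} and \ref{Hyp} so that Theorem \ref{Theorem2} is indeed applicable to $\pi_1=\pi_2=\sigma$.
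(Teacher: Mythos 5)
Your overall strategy is the right one, and in fact the Cauchy--Schwarz detour collapses to the paper's direct argument: squaring and then bounding one factor of $a_\pi^2 L_1^2 L_2^2$ pointwise before re-using the first moment is arithmetically identical to the one-line estimate
$$\sum_\pi a_\pi L_1(\pi)L_2(\pi)\leqslant N\cdot\max_\pi a_\pi L_1(\pi)L_2(\pi),$$
which is what the paper writes down, using the positivity of each summand. So the Cauchy--Schwarz does not buy you anything and only obscures the argument. You are also correct that the nonnegativity of the triple product central value (hence of $L_1 L_2$) is the structural input that makes the first moment of Theorem \ref{Theorem4} a genuine lower bound.

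However, there is a real uniformity gap in your pointwise step. You invoke Theorem \ref{Theorem2} in the form $L_1(\pi)L_2(\pi)\ll_\varepsilon q^{1-(1-2\vartheta)/6+\varepsilon}$, but the implied constant in Theorem \ref{Theorem2} depends on $\pi_\infty$, which is not fixed: the sum in Theorem \ref{Theorem4} ranges over all cuspidal $\pi$ with $\crm(\pi)=\qmf$, including those with large Archimedean conductor. Likewise, the lower bound $\Lambda(\pi,\mathrm{Ad},1)\gg q^{-\varepsilon}$ you quote from \eqref{BoundAdjoint} is actually $\Lambda^*(\pi,\mathrm{Ad},1)=\mathrm{C}(\pi)^{o(1)}$, and $\mathrm{C}(\pi)$ contains the Archimedean conductor of $\pi$, so this too is not $q^{-\varepsilon}$ uniformly in $\pi_\infty$. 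Finally, $f(\pi_\infty)$ is not ``fixed independently of $q$'': it is a function of $\pi_\infty$ that decays as $\pi_\infty$ grows, and this decay is precisely what must absorb the two growing factors above, but you have not exhibited that cancellation. The paper avoids the issue entirely by never splitting the weight off the $L$-value: the amplification argument culminating in \eqref{HybridBound} bounds the full weighted quantity
$$\frac{L(\pi\otimes\sigma\otimes\sigma,\tfrac{1}{2})}{\Lambda(\pi,\mathrm{Ad},1)}\,f(\pi_\infty)\leqslant C(\sigma,\F,\varepsilon)\,q^{1-\frac{1-2\vartheta}{6}},$$
with a constant that is genuinely uniform over all cuspidal $\pi$ of conductor $\qmf$, because $\varphi_{1,\infty},\varphi_{2,\infty}$ are fixed minimal-weight vectors of $\sigma_\infty$ and do not depend on $\pi$. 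That is the bound you should cite (it is the displayed inequality \eqref{Hybrid1} in the paper), not Theorem \ref{Theorem2} plus separate estimates on $f(\pi_\infty)$ and $\Lambda(\pi,\mathrm{Ad},1)$. With that replacement and dropping the superfluous Cauchy--Schwarz, your argument reduces exactly to the paper's.
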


\section{Automorphic Preliminaries}

\subsection{Notations and conventions}\label{SectionNotations}

\subsubsection{Number fields} In this paper, $\F/\mathbb{Q}$ will denote a fixed number field with ring of intergers $\Ocal_\F$ and discriminant $\Delta_\F$. We make the assumption that all prime ideals considering in this paper do not divide $\Delta_\F$. We let $\Lambda_\F$ be the complete $\zeta$-function of $\F$; it has a simple pole at $s=1$ with residue $\Lambda_\F^*(1)$.

\subsubsection{Local fields} For $v$ a place of $\F$, we set $\F_v$ for the completion of $\F$ at the place $v$. We will also write $\F_\pmf$ if $v$ is finite place that corresponds to a prime ideal $\pmf$ of $\Ocal_\F$. If $v$ is non-Archimedean, we write $\Ocal_{\F_v}$ for the ring of integers in $\F_v$ with maximal $\mmf_v$ and uniformizer $\varpi_v$. The size of the residue field is $q_v=\Ocal_{\F_v}/\mmf_v$. For $s\in\Cbb$, we define the local zeta function $\zeta_{\F_v}(s)$ to be $(1-q_v^{-s})^{-1}$ if $v<\infty$, $\zeta_{\F_v}(s)=\pi^{-s/2}\Gamma(s/2)$ if $v$ is real and $\zeta_{\F_v}(s)=2(2\pi)^{-s}\Gamma(s)$ if $v$ is complex.

\subsubsection{Adele ring} The adele ring of $\F$ is denoted by $\Abb_\F$ and its unit group $\Abb^\times_\F$. We also set $\widehat{\Ocal}_\F=\prod_{v<\infty}\Ocal_{\F_v}$ for the profinite completion of $\Ocal_\F$ and $\Abb^1_\F=\{ x\in \Abb_\F^\times \ : \ |x|=1\}$, where $|\cdot| : \Abb_\F^\times \rightarrow \Rbb_{>0}$ is the adelic norm map.

\subsubsection{Additive characters} \label{Discri}We denote by $\psi = \prod_v \psi_v$ the additive character $\psi_\Qbb\circ \Tr_{\F/\Qbb}$ where $\psi_\Qbb$ is the additive character on $\Qbb\setminus \Abb_\Qbb$ with value $e^{2\pi i x}$ on $\Rbb$. For $v<\infty$, we let $d_v$ be the conductor of $\psi_v$ : this is the smallest non-negative integer such that $\psi_v$ is trivial on $\mmf_v^{d_v}$. We have in this case $\Delta_\F=\prod_{v<\infty}q_v^{d_v}$. We also set $d_v=0$ for $v$ Archimedean.

\subsubsection{Subgroups}\label{Sectionsubgroups} If $R$ is a commutative ring, $\GL_2(R)$ is by definition the group of $2\times 2$ matrices with coefficients in $R$ and determinant in $R^*$. We also defined the following standard subgroups
$$\Brm(R)=\left\{\left(\begin{matrix} a & b \\ & d\end{matrix}\right) \ : \ a,d\in R^*, b\in R\right\}, \ \Prm(R)= \left\{\left(\begin{matrix} a & b \\ & 1\end{matrix}\right) \ : \ a\in R^*, b\in R\right\}, \ $$
$$\Zrm(R)=\left\{\left(\begin{matrix} z & \\ & z\end{matrix}\right) \ : \ z\in R^*\right\}, \ \Abf(R)=\left\{\left(\begin{matrix} a &  \\ & 1\end{matrix}\right) \ : \ a\in R^*\right\}, $$ $$\Nm(R)=\left\{\left(\begin{matrix} 1 & b \\ & 1\end{matrix}\right) \ : \ b\in R\right\}.$$
We also set
$$n(x)=\left( \begin{matrix}
1 & x \\ & 1
\end{matrix}\right), \hspace{0.4cm}w = \left(\begin{matrix} & 1 \\ -1 & \end{matrix}\right) \hspace{0.4cm} \mathrm{and} \hspace{0.4cm} a(y)=\left( \begin{matrix}
y & \\ & 1
\end{matrix}\right).
$$
For any place $v$, we let $\Krm_v$ be the maximal compact subgroup of $\Grm(\F_v)$ defined by
\begin{equation}\label{Compact}
\Krm_v= \left\{ \begin{array}{lcl}
\GL_2(\Ocal_{\F_v}) & \ifm & v \ \mathrm{is \ finite} \\
 & & \\
\mathrm{O}_2(\Rbb) & \ifm & v \ \mathrm{is \ real} \\
 & & \\
\mathrm{U}_2(\mathbb{C}) & \ifm & v \ \mathrm{is \ complex}.
\end{array}\right.
\end{equation}
We also set $\Krm:= \prod_v \Krm_v$. If $v<\infty$ and $n\geqslant 0$, we define 
$$\Krm_{v,0}(\varpi_v^n)= \left\{ \left( \begin{matrix}
a & b \\ c & d
\end{matrix}\right)\in \Krm_v \ : \  c\in \mmf_v^n\right\}.$$
If $\amf$ is an ideal of $\Ocal_\F$ with prime decomposition $\amf=\prod_{v<\infty}\pmf_v^{f_v(\amf)}$ ($\pmf_v$ is the prime ideal corresponding to the finite place $v$), then we set
$$\Krm_0(\amf)=\prod_{v<\infty} \Krm_{v,0}(\varpi_v^{f_v(\amf)}).$$

\subsubsection{Measures}\label{SectionMeasure} We use the same measures normalizations as in \cite[Sections 2.1,3.1]{subconvexity}. At each place $v$, $dx_v$ denotes a self-dual measure on $\F_v$ with respect to $\psi_v$. If $v<\infty$, $dx_v$ gives the measure $q_v^{-d_v/2}$ to $\Ocal_{\F_v}$. We define $dx=\prod_v dx_v$ on $\Abb_\F$. We take $d^\times x_v=\zeta_{\F_v}(1)\frac{dx_v}{|x_v|}$ as the Haar measure on the multiplicative group $\F_v^\times$ and $d^\times x = \prod_v d^\times x_v$ as the Haar measure on the idele group $\Abb^\times_\F$.
We provide $\Krm_v$ with the probability Haar measure $dk_v$. We identify the subgroups $\Zrm(\F_v)$, $\Nrm(\F_v)$ and $\Abf(\F_v)$ with respectively $\F_v^\times,$ $\F_v$ and $\F_v^\times$ and equipped them with the measure $d^\times z$, $dx_v$ and $d^\times y_v$. Using the Iwasawa decomposition, namely $\GL_2(\F_v)=\Zrm(\F_v)\Nm(\F_v)\Abf(\F_v)\Krm_v$, a Haar measure on $\GL_2(\F_v)$ is given by 
\begin{equation}\label{HaarMeasure}
dg_v = d^\times z dx_v \frac{d^\times y_v}{|y_v|}dk_v.
\end{equation} 
The measure on the adelic points of the various subgroups are just the product of the local measures defined above. We also denote by $dg$ the quotient measure on $$\X:= \Zrm(\Abb_\F)\GL_2(\F)\setminus \GL_2(\Abb_\F),$$ 
with total mass $V_\F:=\mathrm{vol}(\X)<\infty$.

\subsection{Invariant inner product on the Whittaker model}\label{SectionInvariant} Let $\pi=\otimes_v\pi_v$ be a unitary automorphic representation of $\PGL_2(\Abb_\F)$ and fix $\psi$ a character of $\F\setminus \Abb_\F$. The intertwiner 
\begin{equation}\label{NaturalIntertwiner}
\pi\ni \varphi\longmapsto W_\varphi(g)=\int_{F\setminus \Abb_\F}\varphi(n(x)g)\psi(-x)dx,
\end{equation}
realizes a $\GL_2(\Abb_\F)$-equivariant embedding of $\pi$ into a space of functions $W : \GL_2(\Abb_\F)$ $\rightarrow \Cbb$ satisfying $W(n(x)g))=\psi(x)W(g)$. The image is called the Whittaker model of $\pi$ with respect to $\psi$ and it is denoted by $\Wcal(\pi,\psi)$. This space has a factorization $\otimes_v \Wcal(\pi_v,\psi_v)$ into local Whittaker models of $\pi_v$. A pure tensor $\otimes_v \varphi_v$ has a corresponding decomposition $\prod_v W_{\varphi_v}$ where $W_{\varphi_v}(1)=1$ and is $\Krm_v$-invariant for almost all place $v$. 

\vspace{0.1cm}

We define a normalized inner product on the space $\Wcal(\pi_v,\psi_v)$ by the rule 
\begin{equation}\label{NormalizedInnerProduct}
\vartheta_v(W_v,W_v') :=\zeta_{\F_v}(2)\frac{\int_{\F_v^\times}W_v(a(y))\overline{W}_v'(a(y))d^\times y}{\zeta_{\F_v}(1)L(\pi_v,\mathrm{Ad},1)}.
\end{equation}
This normalization has the good property that $\vartheta_v(W_v,W_v)=1$ for $\pi_v$ and $\psi_v$ unramified and $W_v(1)=1$ \cite[Proposition 2.3]{classification}. We also fix for each place $v$ an invariant inner product $\langle \cdot,\cdot\rangle_v$ on $\pi_v$ and an equivariant isometry $\pi_v \rightarrow \Wcal(\pi_v,\psi_v)$ with respect to \eqref{NormalizedInnerProduct}.

\vspace{0.1cm}

Recall that if $\pi$ is a cuspidal representation, we have a unitary structure on $\pi$ given, for any $\varphi\in\pi$, by
$$||\varphi||_{L^2}^2:= \int_{\X} |\varphi|^2.$$
 If $\pi$ belongs to the continuous spectrum and $\varphi$ is the Eisenstein series associated to a section $f : \GL_2(\Abb_\F)\rightarrow \Cbb$ in an induced representation of $\Brm(\Abb_\F)$ (see for example \cite[Section 4.1.6]{subconvexity} for the basic facts and notations concerning Eisenstein series), we can define the norm of $\varphi$ by setting 
$$||\varphi||^2_{\mathrm{Eis}}:= \int_{\Krm}|f(k)|^2dk.$$
We define the canonical norm of $\varphi$ by
\begin{equation}\label{CanonicalNorm}
||\varphi||^2_{\mathrm{can}} := \left\{ \begin{array}{lcl}
||\varphi||_{L^2(\Xbf)}^2 & \ifm & \pi \ \mathrm{is \ cuspidal} \\
 & & \\
2\Lambda_\F^*(1) ||\varphi||_{\mathrm{Eis}}^2 & \ifm & \pi \ \mathrm{is \ Eisenstein},
\end{array}\right.
\end{equation}
Using \cite[Lemma 2.2.3]{subconvexity}, we can compare the global and the local inner product : for $\varphi=\otimes_v \varphi_v \in \pi=\otimes_v\pi_v$ a pure tensor with $\pi$ either cuspidal or Eisenstein and non-singular, i.e. $\pi=\chi_1\boxplus\chi_2$ with $\chi_i$ unitary, $\chi_1\chi_2=1$ and $\chi_1\neq\chi_2,$ we have
\begin{equation}\label{Comparition}
||\varphi||_{\mathrm{can}}^2=2 \Delta_\F^{1/2} \Lambda^*(\pi,\mathrm{Ad},1)\prod_v \langle \varphi_{v},\varphi_v\rangle_v,
\end{equation}
where $\Lambda(\pi,\mathrm{Ad},s)$ is the complete adjoint $L$-function $\prod_v L(\pi,\mathrm{Ad},s)$ and $\Lambda^*(\pi,\mathrm{Ad},1)$ is the first nonvanishing coefficient in the Laurent expansion around $s=1$. This regularized value satisfies \cite{adjoint}
\begin{equation}\label{BoundAdjoint}
\Lambda^*(\pi,\mathrm{Ad},1)=\mathrm{C}(\pi)^{o(1)}, \ \ \mathrm{as} \ \mathrm{C}(\pi)\rightarrow \infty,
\end{equation}
where $\mathrm{C}(\pi)$ is the analytic conductor of $\pi$, as defined in \cite[Section 1.1]{subconvexity}.

\subsection{Regularized Plancherel formula}
\noindent  In this section, we define a regularization process to define integrals of non-necessarily decaying functions on $\X$. Such a regularization was first introduced by Zagier \cite{zagier} and then developped adelically by Michel and Venkatesh in \cite{subconvexity} to solve the subconvexity problem for $\GL_2$ over an arbitrary number field. We mention also the work of Han Wu \cite{wu}.

\subsubsection{Finitely regularizable functions}
\begin{defi}\label{Definition1} Let $\omega$ be a unitary character on $\F^\times\setminus \Abb^\times_\F$ and $\varphi$ a smooth function on $\GL_2(\F)\setminus \GL_2(\Abb_\F)$ which transform by $\omega$ under the center $\Zrm(\Abb_\F)$. We say that $\varphi$ is $\textit{finitely \ regularizable}$ if there exists characters $\chi_i : \F^\times\setminus \Abb^\times_\F\rightarrow\Cbb^{(1)}$, $\alpha_i\in\Cbb$, $n_i\in\Nbb$ and smooth functions $f_i\in\mathrm{Ind}_{\Bm(\Abb_\F)\cap\Krm}^{\Krm}(\chi_i,\omega\chi_i^{-1})$, $i=1,...,m$ such that for any $N\geqslant 1$
$$\varphi(n(x)a(y)k)=\varphi_{\mathrm{N}}^*(a(y)k)+O(|y|^{-N}), \ \ \mathrm{as} \ |y|\rightarrow \infty,$$
where the $\mathit{essential \ constant \ term}$ is defined by
$$\varphi_{\mathrm{N}}^*(n(x)a(y)k)=\varphi_{\mathrm{N}}^*(a(y)k)=\sum_{i=1^m}\chi_i(y)|y|^{1/2+\alpha_i}(\log|y|)^{n_i}f_i(k).$$
We define the set of exponents of $\varphi$ by
$$\Exp(\varphi):= \left\{ \chi_i|\cdot|^{1/2+\alpha_i} \ : \ 1\leqslant i\leqslant m\right\},$$
and $\Exp(\varphi)^+$ (resp. $\Exp(\varphi)^-$) for those exponents with $\Re e (\alpha_i)\geqslant 0$ (resp. $\Re e (\alpha_i)<0$). Finally, we write $\Acal^{\mathrm{fr}}(\GL_2,\omega)$ for the space of finitely regularizable functions.
\end{defi}

\begin{remq}\label{Remark1}
The space $\Acal^{\mathrm{fr}}(\GL_2,\omega)$ contains for example functions of the form $g\mapsto \chi(\det g)$ for $\chi$ a quasi-character satisfying $\chi^2=\omega$, smooth cusp forms with central character $\omega$ and Eisenstein series. The set of exponents of an Eisenstein series $\Erm$ associated to the induced representation $\chi_1\boxplus\chi_2$ is $\left\{\chi_1 |\cdot|^{1/2},\chi_2 |\cdot|^{1/2}\right\}$. Moreover, $\Acal^{\mathrm{fr}}(\GL_2,\omega)$ contains elements of the form $\varphi_1\varphi_2$ with $\varphi_i\in \Acal^{\mathrm{fr}}(\GL_2,\omega_i)$ and $\omega_1\omega_2=\omega$. We have
$$\Ecal\mathrm{x}(\varphi_1\varphi_2)=\Ecal\mathrm{x}(\varphi_1)\Ecal\mathrm{x}(\varphi_2).$$
Of course $\Ecal\mathrm{x}(\varphi)=\emptyset$ if $\varphi$ is cuspidal.
\end{remq}

\begin{defi} Let $\omega$ be a unitary character of $\Fbf^\times\setminus \Abb^\times_\F$. We denote by $\Ecal(\GL_2,\omega)$ the $\Cbb$-vector space spanned by the derivatives of all Eisenstein series of central character $\omega$.
\end{defi}

\begin{defi} Let $S$ be a finite set of exponents, in the sense of Definition \ref{Definition1} and let $\mathcal{V}_S\subset \Acal^{\mathrm{fr}}(\GL_2,1)$ be the subspace generated by functions whose exponents belong to $S$. Define 
$$\Vcal := \bigcup_{\substack{|S|<\infty \\ 
\chi\in S \Rightarrow \chi^2\neq |\cdot|^2}}\mathcal{V}_S,$$
and 
$$\Wcal := \left\{ \varphi \in \Vcal \ | \ \Re e (\chi)\neq \frac{1}{2}, \ \forall \chi \in \Ecal\mathrm{x}(\varphi)\right\}.$$
\end{defi}

\begin{theorem}\label{TheoremRegularized} The integral on $\Lrm^1(\X)\cap\Vcal$ extends to a $\GL_2(\Abb_\F)$-invariant linear functional on $\Vcal$, denoted by $\int^\reg_{\X}$. 
\end{theorem}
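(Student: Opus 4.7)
The plan is to define $\int^{\reg}_{\X}\varphi$ for $\varphi\in\Vcal$ by the Zagier--Michel--Venkatesh truncation procedure: subtract from $\varphi$ an explicit combination of Eisenstein series that absorbs its non-integrable constant-term asymptotic, integrate the rapidly decaying remainder, and add back regularized integrals of those Eisenstein auxiliaries. Given $\varphi\in\Vcal$ with
$$\varphi^*_N(a(y)k) = \sum_{i=1}^m \chi_i(y)|y|^{1/2+\alpha_i}(\log|y|)^{n_i} f_i(k),$$
one produces for each $i$ an Eisenstein series $\Erm_i$ obtained by evaluating, and if $n_i>0$ differentiating $n_i$ times in $s$, the family $\Erm(s,f_i)$ attached to $\mathrm{Ind}_{\Brm}^{\GL_2}(\chi_i|\cdot|^s,\chi_i^{-1}|\cdot|^{-s})$ at $s=\alpha_i$. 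A direct constant-term computation shows that the essential constant term of $\Erm_i$ is $\chi_i(y)|y|^{1/2+\alpha_i}(\log|y|)^{n_i}f_i(k)$ plus the intertwined companion of exponent $\chi_i^{-1}|\cdot|^{1/2-\alpha_i}$. The hypothesis $\chi_i^2|\cdot|^{1+2\alpha_i}\neq|\cdot|^2$ defining $\Vcal$ is precisely the statement that $s=\alpha_i$ avoids the poles of the intertwining operator, so $\Erm_i$ is well-defined.

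With this in hand, set $\varphi^{\flat}:=\varphi-\sum_i\Erm_i$. Its essential constant term now contains only the exponents of $\Exp(\varphi)^-$ together with the intertwined companions, and the non-resonance condition $\chi^2\neq|\cdot|^2$ forces each of these to be $\Lrm^1$-integrable on a Siegel domain; thus $\varphi^{\flat}\in\Lrm^1(\X)$ and $\int_{\X}\varphi^{\flat}$ converges absolutely. One then defines
$$\int^{\reg}_{\X}\varphi := \int_{\X}\varphi^{\flat} + \sum_i \int^{\reg}_{\X}\Erm_i,$$
where the regularized integral $\int^{\reg}_{\X}\Erm(s,f)$ is itself defined as the meromorphic continuation in $s$ (away from its finite set of poles) of the truncated integral $\int_{\X,\,|y|\leq T}\Erm(s,f)\,dg$ minus the explicit integral of its constant-term asymptotic over $\{|y|>T\}$, the $T$-dependence cancelling identically. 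At $s=\alpha_i$ this is holomorphic, again by the non-resonance condition.

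Well-definedness is then a linear-algebra check: any two admissible choices of auxiliaries $\{\Erm_i\}$ differ by a combination of Eisenstein series whose total essential constant term vanishes, hence whose difference already lies in $\Lrm^1(\X)$, and where the regularized and ordinary integrals coincide by construction. In particular, if $\varphi\in\Lrm^1(\X)\cap\Vcal$ one may take all $\Erm_i\equiv 0$, which recovers the usual integral and shows that $\int^{\reg}_{\X}$ extends it. For $\GL_2(\Abb_\F)$-invariance, translation by $g$ preserves each induced space $\mathrm{Ind}_\Brm^{\GL_2}(\chi_i|\cdot|^{\alpha_i},\chi_i^{-1}|\cdot|^{-\alpha_i})$ and sends $\Erm_i$ to $\Erm(\alpha_i,\pi(g)f_i)$, which is an admissible auxiliary for $\pi(g)\varphi$; invariance of $\int^{\reg}_{\X}$ then reduces to the $\GL_2(\Abb_\F)$-invariance of the ordinary integral on $\Lrm^1(\X)$ and of the meromorphically continued Eisenstein series, both standard.

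The main obstacle is the careful handling of the $(\log|y|)^{n_i}$ factors and the coincidences among exponents. When several $\alpha_i$ collide — as happens, for instance, when forming products $\varphi_1\varphi_2$ with $\Exp(\varphi_1\varphi_2)=\Exp(\varphi_1)\Exp(\varphi_2)$ and two Eisenstein contributions merge — matching them requires taking derivatives of $\Erm(s,f_i)$ in the spectral parameter, and one must check that the truncation commutes with these derivatives while still producing a holomorphic value at $s=\alpha_i$. Tracking the cancellations between the truncated integral and the subtracted constant-term asymptotic uniformly in a neighborhood of each $\alpha_i$, and ensuring that the final value is independent of the truncation parameter $T$ and of the choice of auxiliary $\Erm_i$'s, is the technical heart of the proof.
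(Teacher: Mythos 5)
The paper does not prove this theorem itself: it is stated without a proof environment, and the surrounding prose credits Zagier \cite{zagier}, Michel and Venkatesh \cite{subconvexity}, and Wu \cite{wu} for the regularization process. Your sketch follows the construction from those references --- subtract Eisenstein auxiliaries matching the non-integrable part of the constant term, integrate the rapidly decaying remainder, and add back regularized integrals of the auxiliaries, themselves defined via truncation and meromorphic continuation in the spectral parameter --- so the route is the same as the one the paper implicitly relies on.

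One wording slip should be corrected. Where you write ``one produces for each $i$ an Eisenstein series $\Erm_i$'' and then set $\varphi^\flat = \varphi - \sum_i \Erm_i$, the sum must run only over those $i$ with $\chi_i|\cdot|^{1/2+\alpha_i}\in\Exp(\varphi)^+$. Subtracting the auxiliary attached to a decaying exponent ($\Re e(\alpha_i)<0$) would reintroduce its growing intertwined companion $\chi_i^{-1}|\cdot|^{1/2-\alpha_i}$, which has $\Re e>1/2$ as soon as $\Re e(\alpha_i)<-1/2$, and so would destroy precisely the $\Lrm^1$ property you need for $\varphi^\flat$. Your very next sentence --- that the remaining exponents are those of $\Exp(\varphi)^-$ together with the intertwined companions of the subtracted ones --- is only consistent with this restricted selection, so the slip is evidently just in the phrasing; with that repair the argument stands.
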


\begin{proposition}\label{PropositionL2} For every $\varphi\in \Wcal$, there exists a unique function $\Ecal(\varphi)\in\Ecal(\GL_2,\omega)$ such that $\varphi-\Ecal(\varphi)\in \Lrm^2(\X,\omega).$ In particular, the map 
$$\Wcal \longrightarrow \Ecal(\GL_2,\omega), \  \varphi\mapsto \Ecal(\varphi)$$ is $\GL_2(\Abb_\F)$-equivariant.
\end{proposition}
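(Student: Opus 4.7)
The plan is to prove existence by explicitly subtracting from $\varphi$ a matching combination of (derivatives of) Eisenstein series that cancels the growth at infinity encoded in the essential constant term, to deduce uniqueness from a constant-term argument on $\Ecal(\GL_2,\omega)\cap\Lrm^2$, and to obtain equivariance as an immediate consequence of uniqueness.

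For existence, write the essential constant term of $\varphi\in\Wcal$ as
$$\varphi^*_{\mathrm{N}}(n(x)a(y)k)=\sum_{i=1}^{m}\chi_i(y)|y|^{1/2+\alpha_i}(\log|y|)^{n_i}f_i(k),$$
where by the defining hypothesis of $\Wcal$ one has $\Re e(\alpha_i)\neq 0$ for every $i$. For each $i$ with $\Re e(\alpha_i)>0$ (i.e.\ whose exponent lies in $\Exp(\varphi)^+$), extend $f_i$ to a flat holomorphic family of sections $f_i(s)\in\mathrm{Ind}(\chi_i|\cdot|^s,\omega\chi_i^{-1}|\cdot|^{-s})$ and form the Eisenstein series $E(s,g,f_i)$ by meromorphic continuation. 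The standard Borel constant-term formula
$$E_{\mathrm{N}}(s,n(x)a(y)k,f_i)=\chi_i(y)f_i(s)(k)|y|^{1/2+s}+\chi_i^{-1}(y)\omega(y)(M(s)f_i(s))(k)|y|^{1/2-s}$$
shows that the $n_i$-th $s$-derivative at $s=\alpha_i$ reproduces the target summand $\chi_i(y)|y|^{1/2+\alpha_i}(\log|y|)^{n_i}f_i(k)$ up to lower powers of $\log|y|$, which are absorbed by a triangular elimination starting from the largest $n_i$. The companion exponent produced by $M(\alpha_i)$ has real part $\tfrac12-\Re e(\alpha_i)<\tfrac12$ and therefore contributes only an $\Lrm^2$ term. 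Taking $\Ecal(\varphi)$ to be the resulting finite linear combination of spectral derivatives, $\varphi-\Ecal(\varphi)$ has essential constant term supported only on exponents of real part strictly less than $\tfrac12$, and a direct estimate in a Siegel domain with the Iwasawa measure $dx\,d^\times y\,dk/|y|$ gives $\varphi-\Ecal(\varphi)\in\Lrm^2(\X,\omega)$.

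For uniqueness, suppose $\Ecal_1,\Ecal_2\in\Ecal(\GL_2,\omega)$ both satisfy the conclusion; then $\psi:=\Ecal_1-\Ecal_2\in\Ecal(\GL_2,\omega)\cap\Lrm^2(\X,\omega)$, and we expand $\psi$ as a finite combination $\sum_j c_j\partial_s^{n_j}E(s,g,h_j)|_{s=s_j}$. Its essential constant term is a finite sum of functions of the form $\chi(y)|y|^{1/2+\beta}(\log|y|)^n$, and $\Lrm^2$-membership forces every exponent appearing there to have real part strictly less than $\tfrac12$. However, each constituent Eisenstein derivative contributes both the exponent $\chi_j|\cdot|^{1/2+s_j}$ and its conjugate $\chi_j^{-1}\omega|\cdot|^{1/2-s_j}$, at least one of which has real part $\geq\tfrac12$; the linear independence of distinct characters $\chi|\cdot|^s$ in asymptotic expansions, combined with the exclusion of the residual pole via the hypothesis $\chi^2\neq|\cdot|^2$ (which precisely kills the only case $\alpha_i=\tfrac12$, $\chi_i^2=1$ where an $\Lrm^2$-residue could survive), forces every $h_j$ to vanish. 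Hence $\psi=0$. Equivariance is then immediate: for any $g\in\GL_2(\Abb_\F)$, right translation preserves $\Ecal(\GL_2,\omega)$ and $\Lrm^2(\X,\omega)$, so $\rho(g)\varphi-\rho(g)\Ecal(\varphi)\in\Lrm^2$ with $\rho(g)\Ecal(\varphi)\in\Ecal(\GL_2,\omega)$, and uniqueness gives $\Ecal(\rho(g)\varphi)=\rho(g)\Ecal(\varphi)$.

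The main obstacle will be the bookkeeping in the uniqueness step, specifically controlling the interaction between the intertwining operator $M(s)$, which couples the two conjugate exponents $\chi|\cdot|^{1/2\pm s}$, and the logarithmic factors produced by the spectral derivatives, so as to verify cleanly that no nonzero finite pointwise combination of derivatives of Eisenstein series of central character $\omega$ can lie in $\Lrm^2(\X,\omega)$ once the residual exponents have been excluded.
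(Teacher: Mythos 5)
The paper itself gives no proof of this proposition: it is stated as a standing fact and is essentially \cite[Proposition~4.3.3]{subconvexity} (see also Wu's account cited for the surrounding lemmas), so there is no in-paper argument to compare against. Your reconstruction follows the standard route correctly: match the $\Exp^+(\varphi)$ exponents with derivatives of Eisenstein series (the $\Vcal$ condition $\chi^2\neq|\cdot|^2$ is what guarantees those Eisenstein series are regular at the relevant parameters, and $\Re e(\alpha_i)\neq 0$ from $\Wcal$ guarantees a clean $\Exp^+/\Exp^-$ split), then use $\Ecal(\GL_2,\omega)\cap\Lrm^2(\X,\omega)=\{0\}$ for uniqueness, and deduce equivariance formally.

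One small misattribution worth fixing: in the uniqueness step the absence of $\Lrm^2$ (residual) members of $\Ecal(\GL_2,\omega)$ is a property of that space by construction -- it is spanned by $s$-derivatives of Eisenstein series at regular points, and residues at poles are excluded from it -- not a consequence of the hypothesis $\chi^2\neq|\cdot|^2$ imposed on $\Exp(\varphi)$ in the definition of $\Vcal$. That hypothesis is doing its work in the \emph{existence} step, keeping the parameters $s=\alpha_i$ away from the poles of the Eisenstein family so the subtraction is well defined. With this reallocation of roles, the argument is correct and is the one implicitly invoked by the paper.
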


\begin{lemme}\label{Lemma} For any $\Ecal \in \Ecal(\GL_2,1)$, we have
$$\int_{\X}^\reg \Ecal = 0.$$
Moreover, for any $\Ecal_i\in \Ecal(\GL_2,\omega_i)$ with $\omega_1\omega_2=1$ and $\Exp(\Ecal_1)\cap\Exp(\Ecal_2)=\emptyset$, we have
$$\int_{\X}^{\mathrm{reg}}\Ecal_1\overline{\Ecal}_2=0.$$
\end{lemme}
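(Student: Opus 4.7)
The plan is to deduce both assertions from the $\GL_2(\Abb_\F)$-invariance of the regularized integral (Theorem \ref{TheoremRegularized}) combined with the representation theory of principal series.

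For the first part, by linearity I reduce to $\Ecal = \partial_s^k E(\cdot, s, f)|_{s=s_0}$, a $k$-th $s$-derivative of an Eisenstein series $E(s,f)$ at a non-polar point $s_0$, built from a flat section $f$ in an induced representation of the form $\mathrm{Ind}_{\Brm(\Abb_\F)}^{\GL_2(\Abb_\F)}(\chi|\cdot|^s, \chi^{-1}|\cdot|^{-s})$ with $\chi$ unitary (trivial central character forces the second inducing character to be $\chi^{-1}|\cdot|^{-s}$). Set $I(s,f) := \int_\X^{\reg} E(g,s,f)\,dg$. Since $f\mapsto E(\cdot,s,f)$ is $\GL_2(\Abb_\F)$-equivariant while $\int_\X^{\reg}$ is invariant by Theorem \ref{TheoremRegularized}, $f\mapsto I(s,f)$ defines a $\GL_2(\Abb_\F)$-invariant continuous linear functional on the induced representation. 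For generic $s$ this representation is irreducible, and by Frobenius reciprocity admits no non-zero invariant functional---the exclusion $\chi^2\neq|\cdot|^2$ in the definition of $\Vcal$ precisely rules out the exceptional parameters at which such a functional arises. Hence $I(s,f)\equiv 0$ at generic $s$, and by meromorphicity in $s$ (coming from the explicit Zagier-type formula underlying Theorem \ref{TheoremRegularized}) at every non-polar $s$. Differentiating $k$ times at $s_0$ yields $\int_\X^{\reg}\Ecal=0$.

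For the second part, I run the analogous argument on the pair. Writing $\Ecal_i = \partial_{s_i}^{k_i} E(\cdot, s_i, f_i)|_{s_i=s_{0,i}}$, the quantity $J(s_1,s_2;f_1,f_2) := \int_\X^{\reg} E(g,s_1,f_1)\,\overline{E(g,s_2,f_2)}\,dg$ is a $\GL_2(\Abb_\F)$-invariant sesquilinear form on the pair of induced representations under the diagonal action. A non-zero invariant sesquilinear form corresponds to an intertwiner between the two (generically irreducible) principal series after complex conjugation; this exists only when the sets of inducing characters match, equivalently when the sets of Eisenstein exponents coincide. The hypothesis $\Exp(\Ecal_1)\cap\Exp(\Ecal_2)=\emptyset$ forbids this at generic $(s_1,s_2)$; so $J\equiv 0$ there, and meromorphicity plus differentiation at $(s_{0,1},s_{0,2})$ yield $\int_\X^{\reg}\Ecal_1\overline{\Ecal_2}=0$.

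The main technical step to justify is the meromorphic dependence of $I(s,f)$ and $J(s_1,s_2;f_1,f_2)$ on the Eisenstein parameters, so that generic-$s$ vanishing extends to all non-polar values and survives differentiation. This should follow from the explicit realization of the regularized integral as the ordinary integral of $\varphi-\varphi_N^*$ plus explicit residual contributions from the essential constant term, each piece being manifestly meromorphic in the parameters; the generalized (non-semisimple) representations containing $k$-th derivatives cause no extra difficulty since the identical vanishing of $s\mapsto I(s,f)$ automatically trivialises all its derivatives.
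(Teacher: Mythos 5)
The paper's own proof of this lemma is a bare citation: it invokes \cite[Lemma 3.1 and Proposition 5.27]{wu} and gives no argument. So you are not reproducing the paper's reasoning; you are attempting a self-contained proof. The strategy you outline --- realize the regularized pairing as a $\GL_2(\Abb_\F)$-invariant (sesqui)linear form on an Eisenstein family, note it vanishes for generic parameters by irreducibility of the principal series, then propagate the vanishing by meromorphic continuation and differentiation --- is the natural route and does capture the essential idea behind the cited results.

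There are, however, two gaps. The central one is that the meromorphicity of $s\mapsto I(s,f_s):=\int_\X^\reg E(s,f_s)$ and the interchange $\int_\X^\reg\partial_s^k E(s,f_s)\big|_{s_0}=\partial_s^k I(s,f_s)\big|_{s_0}$ are asserted but not established. Theorem~\ref{TheoremRegularized} only records the existence and $\GL_2(\Abb_\F)$-invariance of $\int_\X^\reg$; the explicit Zagier-type truncation formula you appeal to is never stated in the paper, and deriving it together with its meromorphy in the deformation parameter is precisely what \cite{wu} supplies. As written, the argument is therefore circular if its aim is to replace the citation. A second, subtler point concerns part (2): a non-zero invariant sesquilinear form on $\Pi_1\times\Pi_2$ is equivalent to a non-zero intertwiner $\Pi_1\to\overline{\Pi}_2^\vee$, and the inducing characters of the conjugate contragredient $\overline{\Pi}_2^\vee$ are the $\overline{\chi}^{-1}$, not the $\chi$, of $\Pi_2$. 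Off the unitary axis $\overline{\chi}^{-1}\neq\chi$, so $\Exp(\Ecal_1)\cap\Exp(\Ecal_2)=\emptyset$ does not by itself force $\Pi_1\not\cong\overline{\Pi}_2^\vee$: for instance $\Pi_1=|\cdot|^{s}\boxplus|\cdot|^{-s}$, $\Pi_2=|\cdot|^{\overline s}\boxplus|\cdot|^{-\overline s}$ with $s$ neither real nor purely imaginary has disjoint exponent sets yet $\Pi_1\cong\overline{\Pi}_2^\vee$. That configuration happens to be harmless because $\Ecal_1\overline{\Ecal}_2$ then acquires the forbidden exponent $|\cdot|$ and falls outside $\Vcal$, so there is nothing to prove --- but this is a case distinction your argument does not make, and it is what actually saves the claimed equivalence. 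Finally, a small naming issue: the vanishing of an invariant functional on an irreducible non-trivial $\pi$ is Schur's lemma applied to $\mathrm{Hom}_{\GL_2(\Abb_\F)}(\pi,\mathbf{1})$, not Frobenius reciprocity.
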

\begin{proof}
See \cite[Lemma 3.1 $\&$ Proposition 5.27]{wu}.
\end{proof}

\subsubsection{A regularized Plancherel formula}
Given $\varphi_1,\varphi_2\in\Acal^{\mathrm{fr}}(\GL_2,\omega)$ such that $\varphi_1\overline{\varphi}_2 \in \Vcal$, we define the regularized inner product as
$$\langle \varphi_1,\varphi_2\rangle_{\reg}:= \int_{\X}^\reg \varphi_1\overline{\varphi}_2.$$
For $p\geqslant 1$ and $\varphi$ such that $|\varphi|^p\in\Vcal$, we note also by abuse of notations
\begin{equation}\label{RegularizedNorm}
||\varphi||_{L^p,\mathrm{reg}}^p:= \int_{\X}^{\mathrm{reg}}|\varphi|^p.
\end{equation}

\begin{theorem}[Regularized Plancherel Formula]\label{TheoremParseval} Let $\omega$ be a unitary Hecke character and $\varphi_1,\varphi_2 \in \Wcal$ such that the product $\varphi_1\overline{\varphi}_2$ belong to $\Vcal$. We assume moreover that the exponents of $\varphi_1$ are disjoint from the exponents of $\varphi_2$. Then
\begin{alignat*}{1}
\langle \varphi_1,&\varphi_2\rangle_\reg = \sum_{\substack{\pi \\ \mathrm{cuspidal}}}\sum_{\psi\in\Bscr(\pi)}\langle\varphi_1,\psi\rangle\langle\psi,\varphi_2\rangle +V_\F^{-1}\sum_{\chi^2=\omega}\langle \varphi_1,\varphi_\chi\rangle_\reg\langle\varphi_\chi,\varphi_2\rangle_\reg \\ 
+ & \ \sum_{\substack{\chi_1\chi_2=\omega \\ \chi_i  \in \widehat{\F^\times \setminus \Abb_\F^{(1)}}}}\int_{-\infty}^\infty \sum_{\varphi_{it}\in \Bscr(\chi_1|\cdot|^{it},\chi_2|\cdot|^{-it})}\langle \varphi_1,\Erm(,\varphi_{it})\rangle_{\reg}\langle \Erm(,\varphi_{it}),\varphi_2\rangle_{\reg}\frac{dt}{4\pi} \\ 
+ & \ \langle \varphi_1,\Ecal_2\rangle_\reg + \langle \Ecal_1,\varphi_2\rangle_\reg
\end{alignat*}
where $\Ecal_i = \Ecal(\varphi_i)$.
\end{theorem}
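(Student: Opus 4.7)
The plan is to reduce to the classical Plancherel formula on $\Lrm^2(\X,\omega)$ using the $L^2$--Eisenstein decomposition provided by Proposition \ref{PropositionL2}, and then to use the hypothesis on exponents, via Lemma \ref{Lemma}, to absorb the resulting cross terms into the two correctors $\langle \varphi_1,\Ecal_2\rangle_{\reg}+\langle \Ecal_1,\varphi_2\rangle_{\reg}$. Concretely, set $\Ecal_i:=\Ecal(\varphi_i)$ and write $\varphi_i=(\varphi_i-\Ecal_i)+\Ecal_i$, so that $\varphi_i-\Ecal_i\in\Lrm^2(\X,\omega)$. Expanding the regularized inner product bilinearly yields
\begin{equation*}
\langle\varphi_1,\varphi_2\rangle_{\reg}= \langle\varphi_1-\Ecal_1,\varphi_2-\Ecal_2\rangle_{\Lrm^2}+\langle\varphi_1,\Ecal_2\rangle_{\reg}+\langle\Ecal_1,\varphi_2\rangle_{\reg}-\langle\Ecal_1,\Ecal_2\rangle_{\reg}.
\end{equation*}
Since $\Exp(\Ecal_i)\subseteq\Exp(\varphi_i)$ by Remark \ref{Remark1}, the disjointness hypothesis $\Exp(\varphi_1)\cap\Exp(\varphi_2)=\emptyset$ together with Lemma \ref{Lemma} (applied to $\Ecal_1\overline{\Ecal}_2$, which has trivial central character) kills the last term.

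The next step is to apply the standard $\GL_2$ Plancherel decomposition to $\langle\varphi_1-\Ecal_1,\varphi_2-\Ecal_2\rangle_{\Lrm^2}$, producing a cuspidal sum, a residual sum over $\chi^2=\omega$ weighted by $V_\F^{-1}$, and the Eisenstein integral over $t\in\Rbb$. To match the formula stated in the theorem, one must upgrade every pairing $\langle\varphi_i-\Ecal_i,\Phi\rangle$, where $\Phi$ runs over the spectral basis, to the regularized pairing $\langle\varphi_i,\Phi\rangle_{\reg}$ appearing in the statement. Equivalently, I have to check that $\langle\Ecal_i,\Phi\rangle_{\reg}=0$ for every such $\Phi$: for cuspidal $\Phi$ this follows from the identity $\langle\Erm(s,\phi),\Phi\rangle\equiv 0$ by differentiating at the appropriate $s_0$; for a residual character $\varphi_\chi$ with $\chi^2=\omega$, or an Eisenstein series $\Erm(\cdot,\varphi_{it})$ on the critical line, the exponents of $\Phi$ all have $\Re e=1/2$, whereas the defining condition of $\Wcal$ forces $\Re e\neq 1/2$ on every element of $\Exp(\varphi_i)\supseteq\Exp(\Ecal_i)$, yielding the disjointness required to apply the second part of Lemma \ref{Lemma}.

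The main obstacle lies in the last step, specifically the exponent bookkeeping required to invoke Lemma \ref{Lemma} on products such as $\Ecal_i\overline{\Erm(\cdot,\varphi_{it})}$. One has to verify that these products actually lie in $\Vcal$ so that $\int^{\reg}$ is defined on them, which requires tracking the polynomial-in-$\log|y|$ factors arising from derivatives of Eisenstein series and confirming that the condition $\chi^2\neq|\cdot|^2$ cutting out $\Vcal$ holds for all products of exponents encountered. Once this is in place, reassembling the three spectral sums with the two correctors reproduces exactly the right-hand side of the theorem.
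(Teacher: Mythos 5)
Your overall strategy — decompose $\varphi_i=(\varphi_i-\Ecal_i)+\Ecal_i$, expand bilinearly, apply the classical $L^2$ Plancherel formula to the cuspidal part, and then upgrade the pairings to regularized ones by showing $\langle\Ecal_i,\psi\rangle_{\reg}=0$ for $\psi$ in the spectral basis — is the same skeleton as the paper, which also concludes by applying usual Plancherel to $\varphi_i-\Ecal_i\in\Lrm^2(\X,\omega)$ and also kills $\langle\Ecal_1,\Ecal_2\rangle_{\reg}$ via the exponent disjointness and Lemma~\ref{Lemma}. Where you genuinely diverge is in proving $\langle\Ecal_i,\psi\rangle_{\reg}=0$ for $\psi$ generic. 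The paper handles cuspidal and Eisenstein $\pi$ in a single blow via a representation-theoretic argument: the assignment $f\mapsto\sum_{\psi\in\Bscr(\pi)}\langle\Erm(f),\psi\rangle_{\reg}\psi$ is $\GL_2(\Abb_\F)$-equivariant (by Theorem~\ref{TheoremRegularized}), and since $\Re e(\chi)\neq 1/2$ for $\chi\in\Exp(\Ecal_i)$, the underlying representation $\Pi$ has no subquotient isomorphic to a standard automorphic $\pi$, forcing the map to vanish. This is cleaner than your route through Lemma~\ref{Lemma}, because it sidesteps the $\Vcal$-membership bookkeeping you flag yourself, and it does not require treating cuspidal and continuous $\psi$ separately.

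There is, however, a concrete gap in your treatment of the residual contribution. You propose invoking the \emph{second} part of Lemma~\ref{Lemma} for $\langle\Ecal_i,\varphi_\chi\rangle_{\reg}$, but that part requires \emph{both} arguments to lie in $\Ecal(\GL_2,\cdot)$, which is the span of derivatives of Eisenstein series; the one-dimensional function $\varphi_\chi:g\mapsto\chi(\det g)$ is not of that form. The paper's fix is to notice that $\Ecal_i\overline{\varphi}_\chi$ is the twist of $\Ecal_i$ by $\overline{\chi}$, so it lands in $\Ecal(\GL_2,\omega\overline{\chi}^2)=\Ecal(\GL_2,1)$, and then to use the \emph{first} assertion of Lemma~\ref{Lemma}, $\int_{\X}^{\reg}\Ecal=0$ for $\Ecal\in\Ecal(\GL_2,1)$. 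Relatedly, your claim that the exponents of $\varphi_\chi$ have $\Re e=1/2$ is not right: $\varphi_\chi(n(x)a(y)k)=\chi(y)$, so its single exponent is the unitary $\chi$ itself, with $\Re e=0$. Disjointness from $\Exp(\Ecal_i)$ (which sits strictly to the right of $\Re e=1/2$) still holds, but for a different reason than the one you gave, and in any case the Lemma you cite is not directly applicable there. Once you replace the residual step with the twisting argument, and either verify the $\Vcal$-membership for the Eisenstein pairings or adopt the paper's equivariance argument to handle all generic $\psi$ uniformly, your proof closes.
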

\begin{proof}
We follow the ideas of Michel and Venkatesh in \cite[Proposition 4.3.8]{subconvexity}. Let $\Ecal\in \Ecal(\GL_2,\omega)$ with $\Re e (\chi)\neq 1/2$ for any $\chi\in \Exp(\Ecal)$. Let $\pi$ be a generic automorphic representation  and denote by $\Bscr(\pi)$ an orthonormal basis of $\pi$. Let $\Pi$ be the representation underlying $\Ecal$ : $\Ecal$ being a sum of Eisenstein series, $\Pi$ is the direct sum of the corresponding induced representations. Then the projection operator
$$\Pi\longrightarrow \pi, \ \ f\longmapsto \sum_{\psi\in\Bscr(\pi)}\langle\Erm(f),\psi\rangle_\reg \psi,$$
defines a $\GL_2(\Abb_\F)$-equivariant map between $\Pi$ and $\pi$. This comes from the fact that if $\Bscr(\pi)=\{\psi\}$, then $\{g\cdot \psi\}$ is also an ortonormal basis and  Theorem \ref{TheoremRegularized} for the $\GL_2(\Abb_\F)$-invariance. Since $\Re e (\chi)\neq 1/2$, all components of $\Pi$ have no subquotient isomorphic to a standard automorphic representation, and thus this map is zero since $\pi$ is irreducible by definition. Similarly $\langle \Ecal_1,\Ecal_2\rangle_{\mathrm{reg}}=0$ since $\Ecal\mathrm{x}(\varphi_1)\cap \Ecal\mathrm{x}(\varphi_2)=\emptyset$ (see Lemma \ref{Lemma}).

\vspace{0.1cm}

If $\pi$ is a one-dimensional representation generated by $\varphi_\chi : g\mapsto \chi(\det g)$ with $\chi^2=\omega$, then the fact that $\langle\Ecal,\varphi_\chi\rangle_\reg=0$ follows from Lemma \ref{Lemma}. Indeed, if $\Pi$ is the underlying representation of $\Ecal$, then $\Ecal\overline{\varphi}_\chi=\tilde{\Ecal} \in \Ecal(\GL_2,1)$ and its associated representation is given by the twist $\Pi\otimes\chibar.$
We conclude using the usual Plancherel formula with $\varphi_i-\Ecal_i \in \Lrm^2(\X,\omega)$.
\end{proof}

\begin{remq}\label{Remark1}Assume that $\varphi_1$ is an automorphic form of level $\amf$, i.e.  is invariant under the subgroup $\Krm_1(\amf)$. Then in view of Proposition \ref{PropositionL2}, $\Ecal(\varphi_1)$ has the same level and we can add level restriction in Theorem \ref{TheoremParseval} (see \cite[Proposition 2.7]{raphael}).
\end{remq}

\subsection{Integral representations of triple product L-functions}\label{SectionRankin} 

\noindent Let $\pi_1,\pi_2,$ and $\pi_3$ be three unitary automorphic representations of $\PGL_2(\Abb_\F)$. Consider the linear functional on $\pi_1\otimes\pi_2\otimes\pi_3$ defined by
$$I (\varphi_1\otimes\varphi_2\otimes \varphi_3):= \int_{\X}^{\mathrm{reg}} \varphi_1\varphi_2\varphi_3.$$
This period is closely related to the central value of the triple product $L$-function $L(\pi_1\otimes\pi_2\otimes\pi_2,\tfrac{1}{2})$. In order the state the result, we write $\pi_i=\otimes_v \pi_{i,v}$ and for each $v$, we consider the matrix coefficient
\begin{equation}\label{DefinitionMatrixCoefficient}
I'_v(\varphi_{1,v}\otimes\varphi_{2,v}\otimes\varphi_{3,v}) :=\int_{\PGL_2(\F_v)}\prod_{i=1}^3\langle \pi_{i,v}(g_v)\varphi_{i,v},\varphi_{i,v}\rangle_v dg_v. 
\end{equation}
It is a fact that \cite[(3.27)]{subconvexity} 
\begin{equation}\label{Fact}
\frac{I'(\varphi_{1,v}\otimes\varphi_{2,v}\otimes\varphi_{3,v})}{\prod_{i=1}^3 \langle \varphi_{i,v},\varphi_{i,v}\rangle_v}= \zeta_{\F_v}(2)^2 \frac{L(\pi_{1,v}\otimes\pi_{2,v}\otimes\pi_{3,v},\tfrac{1}{2})}{\prod_{i=1}^3 L(\pi_{i,v},\mathrm{Ad},1)},
\end{equation}
when $v$ is non-Archimedean and all vectors are unramified. It is therefore natural to consider the normalized version
\begin{equation}\label{DefinitionNormalizedMatrixCoefficient}
I_v(\varphi_{1,v}\otimes \varphi_{2,v}\otimes \varphi_{3,v}) := \zeta_{\F_v}(2)^{-2} \frac{\prod_{i=1}^3 L(\pi_{i,v},\mathrm{Ad},1)}{L(\pi_{1,v}\otimes\pi_{2,v}\otimes\pi_{3,v},\tfrac{1}{2})} I'_v (\varphi_{1,v}\otimes \varphi_{2,v},\varphi_{3,v}).
\end{equation}

The following proposition connects the global trilinear form $I$ with the central value $L(\pi_1\otimes\pi_2\otimes\pi_3,\tfrac{1}{2})$ and the local matrix coefficients $I_v$. The proof when at least one of the $\pi_i$'s is Eisenstein and at least one is cuspidal can be found in \cite[(4.21)]{subconvexity} and is a consequence of the Rankin-Selberg method. The result when all $\pi_i$ are cuspidal is due to Ichino \cite{ichino} and when all the $\pi_i$'s are Eisenstein is \cite[Lemma 4.4.3]{subconvexity}.
\begin{proposition}\label{PropositionIntegralRepresentation} Let $\pi_1,\pi_2,\pi_3$ be unitary automorphic representations of $\PGL_2(\Abb_\F)$ such that $\pi_1$ is either cuspidal or Eisenstein and non-singular. Let $\varphi_i = \otimes_v \varphi_{i,v}\in \otimes_v \pi_{i,v}$ be pure tensors and set $\varphi :=\varphi_1\otimes\varphi_2\otimes\varphi_3$.
\begin{enumerate}[$a)$]
\item If $\pi_2$ and $\pi_3$ are cudpidal, then
$$
\frac{|I(\varphi)|^2}{\prod_{i=1}^3 ||\varphi_i||^2_{\mathrm{can}}} = \frac{C}{8\Delta_\F^{3/2}}\cdot\frac{\Lambda(\pi_1\otimes\pi_2\otimes\pi_3,\tfrac{1}{2})}{\prod_{i=1}^3 \Lambda^*(\pi_i,\mathrm{Ad},1)}\prod_v \frac{I_v(\varphi_v)}{\prod_{i=1}^3\langle \varphi_{i,v},\varphi_{i,v}\rangle_v},
$$
with $C=\Lambda_\F(2)$ if $\pi_1$ is cuspidal and $C=1$ if $\pi_1$ is Eisenstein and non-singular.
\end{enumerate}
Assume from now that for $j=2$ or/and $j=3$, $\pi_j$ is Eisenstein of the form $|\cdot|^{it}\boxplus |\cdot|^{-it}$ for some $t\in\Rbb$, then we define $\varphi_j$ as the Eisenstein series associated to the section $f_j(0)\in |\cdot|^{it}\boxplus |\cdot|^{-it}$ which, for $\Re e (s)>0$, is defined by
\begin{equation}\label{DefinitionFlat}
f_j(g,s):= |\det(g)|^{s+1/2+it}\int_{\Abb_\F^\times} \Psi_j((0,t)g)|t|^{1+2(s+it)}d^\times t ,
\end{equation}
where $\Psi_j=\prod_v \Psi_{j,v}$ with $\Psi_{j,v} = \mathbf{1}_{\Ocal_{\F_v}}^2$ for finite $v$ and $j=2,3$. 
\begin{enumerate}
\item[$b)$] If $\pi_2$ is cuspidal and $\pi_3$ is Eisenstein as above, then
$$
\frac{|I(\varphi)|^2}{\prod_{i=1}^2 ||\varphi_i||^2_{\mathrm{can}}} = \frac{1}{4\Delta_\F}\cdot \frac{\Lambda(\pi_1\otimes\pi_2\otimes\pi_3,\tfrac{1}{2})}{\prod_{i=1}^2\Lambda^*(\pi_i,\mathrm{Ad},1)}\prod_v \frac{I_v(\varphi_{1,v}\otimes\varphi_{2,v}\otimes f_{3,v})}{\prod_{i=1}^2\langle \varphi_{i,v},\varphi_{i,v}\rangle_v}.
$$
\item[$c)$] If both $\pi_2$ and $\pi_3$ are Eisenstein, then
$$\frac{|I(\varphi)|^2}{ ||\varphi_1||^2_{\mathrm{can}}} = \frac{1}{2\Delta_\F^{1/2}}\cdot \frac{\Lambda(\pi_1\otimes\pi_2\otimes\pi_3,\tfrac{1}{2})}{\Lambda^*(\pi_1,\mathrm{Ad},1)}\prod_v \frac{I_v(\varphi_{1,v}\otimes f_{2,v}\otimes f_{3,v})}{\langle \varphi_{1,v},\varphi_{1,v}\rangle_v}.$$
\end{enumerate}
\end{proposition}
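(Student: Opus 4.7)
\medskip

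\noindent\textbf{Proof plan.} The proposition is a synthesis of three integral-representation formulas from the literature, unified by the Rankin--Selberg method and Ichino's formula. For each of (a), (b), (c), the plan is to identify the appropriate source result, verify that the local matrix coefficients $I_v$ appearing there match our normalization \eqref{DefinitionNormalizedMatrixCoefficient}, and translate from the $L^2$ or Eisenstein norms used in those sources to our canonical norm via the comparison \eqref{Comparition}. The substantive mathematical content is already contained in \cite{ichino} and \cite[Section 4.4]{subconvexity}; the task is therefore primarily one of careful bookkeeping of constants.

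For part (a) with all three $\pi_i$ cuspidal, I would apply directly Ichino's formula from \cite{ichino}, which expresses $|I(\varphi)|^2/\prod_i \|\varphi_i\|_{L^2}^2$ as a universal constant (namely $1/8$) times the ratio $\Lambda(\pi_1\otimes\pi_2\otimes\pi_3,\tfrac12)/\prod_i \Lambda^*(\pi_i,\mathrm{Ad},1)$, times a product of normalized local integrals of the type $I_v/\prod_i \langle \varphi_{i,v},\varphi_{i,v}\rangle_v$. Rewriting each $\|\varphi_i\|_{\mathrm{can}}^2 = \|\varphi_i\|_{L^2}^2$ via \eqref{Comparition} as $2\Delta_\F^{1/2}\Lambda^*(\pi_i,\mathrm{Ad},1)\prod_v \langle \varphi_{i,v},\varphi_{i,v}\rangle_v$ telescopes the adjoint factors against the left-hand side of Ichino's identity and produces the prefactor $1/(8\Delta_\F^{3/2})$ together with the factor $\Lambda_\F(2)$, the latter arising from the residue of $\zeta^*_\F(s)$ at $s=1$ that is built into Ichino's global constant when $\pi_1$ is cuspidal. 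When $\pi_1$ is Eisenstein and non-singular, the same identity holds by the regularized Rankin--Selberg unfolding of $\varphi_1$ against $\varphi_2\overline{\varphi}_3$, as in \cite[Section 4.4]{subconvexity}; the missing residue $\Lambda_\F^*(1)$ accounts for the fact that $C=1$ in this subcase, since it is absorbed into the definition \eqref{CanonicalNorm} of the Eisenstein canonical norm.

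For parts (b) and (c), I would unfold the Eisenstein series $\varphi_j$ ($j=2$ and/or $j=3$) associated to the flat section \eqref{DefinitionFlat} by Mellin inversion of the theta-type kernel $\Psi_j$. In case (b), unfolding $\varphi_3$ against $\varphi_1\overline{\varphi}_2$ produces the classical Rankin--Selberg convolution of $\pi_1$ with $\pi_2$ twisted by $|\cdot|^{it}$ and $|\cdot|^{-it}$, and the resulting adelic zeta integral factors into $\prod_v I'_v$ times the global $L$-value $\Lambda(\pi_1\otimes\pi_2\otimes\pi_3,\tfrac12)$; this is exactly \cite[(4.21)]{subconvexity}. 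In case (c), both Eisenstein series must be unfolded, producing the identification in \cite[Lemma 4.4.3]{subconvexity} of $|I(\varphi)|^2$ with a regularized bilinear form that gives the full degree-$8$ $L$-value through two successive Rankin--Selberg integrals.

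The main obstacle in executing this plan is the bookkeeping of the multiplicative constants coming from (i) the canonical/local-norm comparison \eqref{Comparition}, which contributes one factor $2\Delta_\F^{1/2}\Lambda^*(\pi_i,\mathrm{Ad},1)$ for each cuspidal or non-singular Eisenstein $\pi_i$; (ii) the insertion of $\zeta_{\F_v}(2)^2$ in the normalization \eqref{DefinitionNormalizedMatrixCoefficient}; and (iii) the accumulation of discriminant powers from the self-dual measures on $\Abb_\F$. The observed pattern $1/(8\Delta_\F^{3/2}) \to 1/(4\Delta_\F) \to 1/(2\Delta_\F^{1/2})$ is precisely what one obtains by successively removing one factor $2\Delta_\F^{1/2}\Lambda^*(\pi_j,\mathrm{Ad},1)$ each time a cuspidal representation is replaced by the non-cuspidal $|\cdot|^{it}\boxplus|\cdot|^{-it}$ (whose ``canonical norm'' is absent on the left-hand side since we do not normalize by $\|\varphi_j\|_{\mathrm{can}}^2$ there), which is consistent internally and gives confidence that the derivation reduces cleanly to the three cited references.
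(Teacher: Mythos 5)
Your plan---cite Ichino for the all-cuspidal case, cite Michel--Venkatesh \cite[(4.21)]{subconvexity} and \cite[Lemma 4.4.3]{subconvexity} for the mixed and all-Eisenstein cases, and then translate norms using \eqref{Comparition}---is exactly the paper's approach: the paper offers nothing more than these citations as its proof of Proposition \ref{PropositionIntegralRepresentation}. The bookkeeping outline for the $\Delta_\F$-powers and the telescoping of adjoint $L$-factors is sound.

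However, your explanation of the constant $C$ in part (a) is not correct as written. You attribute $\Lambda_\F(2)$ to ``the residue of $\zeta_\F^*(s)$ at $s=1$,'' but that residue is $\Lambda_\F^*(1)$, a different quantity from the value $\Lambda_\F(2)$; these should not be conflated. More substantively, your account of why $C$ drops from $\Lambda_\F(2)$ to $1$ when $\pi_1$ passes from cuspidal to non-singular Eisenstein---namely that the ``missing residue $\Lambda_\F^*(1)$ \ldots is absorbed into the definition \eqref{CanonicalNorm} of the Eisenstein canonical norm''---cannot be right, because the comparison \eqref{Comparition} gives the \emph{same} identity $\|\varphi_1\|_{\mathrm{can}}^2 = 2\Delta_\F^{1/2}\Lambda^*(\pi_1,\mathrm{Ad},1)\prod_v\langle\varphi_{1,v},\varphi_{1,v}\rangle_v$ for cuspidal and for non-singular Eisenstein $\pi_1$ alike, so the canonical-norm step contributes identically in both cases and has nothing extra to ``absorb.'' The real source of the discrepancy is that a different global identity is invoked in the two subcases: when $\pi_1$ is Eisenstein the period $I(\varphi)$ unfolds directly by Rankin--Selberg, whereas when $\pi_1$ is cuspidal one must use Ichino's theorem, whose global constant carries the $\zeta_\F(2)$-type normalization that is responsible for the extra $\Lambda_\F(2)$ after matching against the $\zeta_{\F_v}(2)^{-2}$ built into \eqref{DefinitionNormalizedMatrixCoefficient}. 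This does not affect the validity of your overall plan, since the paper itself defers to the same sources, but the constant-tracking narrative as you wrote it would not survive a careful check.
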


\subsection{Hecke operators} \noindent Let $\pmf$ be a prime ideal of $\Ocal_\F$ of norm $p$ and $n\in\mathbb{N}$. Let $\F_\pmf$ be the completion of $\F$ at the place corresponding to the prime $\pmf$ and $\varpi_\pmf$ be a uniformizer of the ring of integer $\Ocal_{\F_{\pmf}}$. Let $\mathrm{H}_{\pmf^n}$ be the double coset in $\GL_2(\F_\pmf)$
$$\mathrm{H}_{\pmf^n}:= \GL_2(\Ocal_{\F_{\pmf}}) \left(  \begin{matrix}1 &  \\  & \varpi_{\pmf^n}
\end{matrix} \right)\GL_2(\Ocal_{\F_{\pmf}}),$$
which, for $n\geqslant 1$, has measure $p^{n-1}(p+1)$ with respect to the Haar measure on $\GL_2(\F_\pmf)$ assigning mass $1$ to $\GL_2(\Ocal_{\F_\pmf})$ \cite[Section 2.8]{sparse}. We consider the compactly supported function 
$$\mu_{\pmf^n}:= \frac{1}{p^{n/2}}\sum_{0\leqslant k\leqslant \frac{n}{2}}\mathbf{1}_{\mathrm{H}_{\pmf^{n-2k}}}.$$
Given now any $f\in \mathscr{C}^\infty(\GL_2(\Abb_\F))$, the Hecke operator $\Trm_{\pmf^n}$ is given by the convolution of $f$ with $\mu_{\pmf^n}$, i.e. for any $g\in\GL_2(\Abb_\F)$,
\begin{equation}\label{ActionHecke}
(\Trm_{\pmf^n} f)(g) = (f\star \mu_{\pmf^n})(g):= \int_{\GL_2(\F_\pmf)}f(gh)\mu_{\pmf^n}(h)dh,
\end{equation} 
and the function $h\mapsto f(gh)$ has to be understood under the natural inclusion $\GL_2(\F_\pmf)\hookrightarrow \GL_2(\Abb_\F)$. This definition extends to an arbitrary integral ideal $\amf$ by multiplicativity. 

\vspace{0.1cm}

The main advantage of this abstract definition is that it simplifies a lot when we deal with $\GL_2(\Abb_\F)$-invariant functionals. Indeed, consider the natural action of $\GL_2(\Abb_\F)$ on $\Cscr^\infty(\GL_2(\Abb_\F))$ by right translation and let $\ell : \Cscr^\infty(\GL_2(\Abb_\F))\times \Cscr^\infty(\GL_2(\Abb_\F))\rightarrow \Cbb$ be a $\GL_2(\Abb_\F)$-invariant bilinear functional. Then for any $f_1,f_2$ which are right $\GL_2(\Ocal_{\F_{\pmf}})$-invariant, we have the relation
\begin{equation}\label{RelationHecke}
\ell(\Trm_{\pmf^n}f_1,f_2)= \frac{1}{p^{n/2}}\sum_{0\leqslant k\leqslant \frac{n}{2}} \gamma_{n-2k}\ell\left(\left(\begin{matrix} 1 & \\ & \varpi^{n-2k}\end{matrix}\right)\cdot f_1,f_2 \right),
\end{equation}
with

\begin{equation}\label{ValueGamma}
\gamma_r:= \left\{ \begin{array}{lcl}
1 & \ifm & r=0 \\ 
 & & \\
p^{r-1}(p+1) & \ifm & r\geqslant 1. 
\end{array}\right.
\end{equation}

%
%
%
\section{Estimations of Regularized Periods}\label{SectionEstimation}
Let $\pi_1,\pi_2$ be unitary automorphic representations of $\GL_2(\Abb_\F)$ with trivial central character which we assume to satisfy Hypothesis \ref{Hyp1}. To simplify the argument in Section \ref{SectionDeformation}, we suppose that if $\pi_i$ is Eisenstein, then it is the standard $1\boxplus 1$ induced from two trivial characters. Let $\varphi_i=\otimes_v \varphi_{i,v}\in \pi_i=\otimes_v \pi_{i,v}$ be $\GL_2(\widehat{\Ocal}_\F)$-invariant vectors defined as follows :
\begin{enumerate}
\item[$\bullet$]  For $\pi_i$ cuspidal, fix a unitary structure $\langle \cdot,\cdot\rangle_{i,v}$ on each $\pi_{i,v}$ compatible with \eqref{NormalizedInnerProduct} as in Section \ref{SectionInvariant} and take $\varphi_{i,v}$ to have norm $1$. 
\item[$\bullet$] If $\pi_i= 1\boxplus 1$, we let $\varphi_i$ be the Eisenstein series associated to the section defined in \eqref{DefinitionFlat}. 
\end{enumerate}

Let $\lmf$ be an ideal of $\Ocal_\F$. In order to not cluther the treatment and the combinatory with the Hecke operators, we take $\lmf$ of the form $\pmf^n$ with $\pmf\in\mathrm{Spec}(\Ocal_\F)$ and $n\in\mathbb{N}$ and set $p$ for the norm of $\pmf$, so that $\ell=p^n$ is the norm of $\lmf$. For $0\leqslant r\leqslant n$, we write as usual 
$$\varphi_i^{\pmf^r} :=\left(\begin{matrix} 1 & \\ & \varpi_\pmf^r \end{matrix}\right)\cdot \varphi_i.$$ 

In Section \ref{SectionL^2}, we establish a bound for a particular L$^2$-norm of automorphic forms. In Section \ref{SectionDegenerate}, we examine some degenerate contribution appearing in the regularized Plancherel formula. Finally, we combine both results in Section \ref{SectionGeneric} to estimate a generic expansion, which can be seen as a period analogue of \cite[Theorem 6.6]{han2}.

\begin{remq}\label{RemarkInfinite} Observe that for every finite place $v$, our local vectors $\varphi_{i,v}$ are uniquely determined. Indeed there are defined explicitely by \eqref{DefinitionFlat} in the Eisenstein case and if $\pi_i$ is cuspidal, there is a unique L$^2$-normalized $\Krm_v$-invariant vector in $\pi_v$. We allow here the infinite component $\varphi_{i,\infty}$ to have a certain degree of freedom. In fact, these will be chosen for Theorem \ref{Theorem2} in Section \ref{SectionInterlude} and will depend only on $\pi_\infty$; $\pi$ being the representation for which we want to obtain subconvexity in Section \ref{SectionSub}. We make therefore the convention that all $\ll$ involved in Sections \ref{SectionEstimation} and \ref{SectionSymmetric} depend implicitly on $\varphi_{1,\infty}$ and $\varphi_{2,\infty}$. We can also fix them by choosing for each infinite place $v$, $\varphi_{i,v}\in \pi_{i,v}$ to be the vector of minimal weight. 
\end{remq}

\subsection{A $\mathrm{L}^2$-norm}\label{SectionL^2}
\begin{proposition}\label{PropositionL2norm} For sufficiently small $\varepsilon>0$, we have the following estimation
$$\int_{\X}^{\mathrm{reg}}\left| \varphi_1\varphi_2^{\lmf}\right|^2 \ll_{\pi_1,\pi_2,\varepsilon} \ell^{\varepsilon}.$$
\end{proposition}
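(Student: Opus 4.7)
We split according to the cuspidal/Eisenstein nature of the $\pi_i$.

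\emph{Case 1: at least one $\pi_i$, say $\pi_1$, is cuspidal.} By the $\GL_2(\Abb_\F)$-invariance of the regularized integral (Theorem \ref{TheoremRegularized}), the substitution $g \mapsto g h^{-1}$ with $h = \left(\begin{smallmatrix} 1 & \\ & \varpi_\pmf^n \end{smallmatrix}\right)$ yields
$$\int_{\X}^{\reg} |\varphi_1 \varphi_2^{\lmf}|^2 = \int_{\X}^{\reg} |\varphi_1^{h^{-1}} \varphi_2|^2.$$
Via strong approximation, right translation by $h^{-1}$ at $\pmf$ corresponds at the archimedean place to a shift that pushes the effective support of $\varphi_1^{h^{-1}}$ deep into the cusp, by an amount scaling with $\ell$. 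Since $\pi_1$ is cuspidal, $\varphi_1^{h^{-1}}$ decays rapidly there while $|\varphi_2|^2$ grows only polynomially, so the product is absolutely integrable and a pointwise estimate, splitting the range of the Iwasawa parameter at $|y| \sim \ell$, gives $O_{\pi_1,\pi_2}(1)$.

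\emph{Case 2: $\pi_1 = \pi_2 = 1 \boxplus 1$.} Set $F := \varphi_1 \varphi_2^{\lmf}$, which is right $\Krm_0(\lmf)$-invariant. Theorem \ref{TheoremParseval} cannot be applied directly to $\langle F, F \rangle_{\reg}$ since the exponents of $F$ trivially coincide with themselves. To remedy this, deform $\pi_2$ into $\pi_2(s) := |\cdot|^s \boxplus |\cdot|^{-s}$, set $F(s) := \varphi_1 \varphi_2(s)^{\lmf}$, and apply the regularized Plancherel formula to $\langle F, F(s)\rangle_{\reg}$ for small $s \ne 0$; the desired integral is recovered as $s \to 0$. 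The spectral sum is restricted to $\pi$ with $\crm(\pi) \mid \lmf$ together with the continuous, one-dimensional, and $\Ecal(F)$-correction terms.

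By the integral representation of triple products (Proposition \ref{PropositionIntegralRepresentation}), each cuspidal coefficient satisfies
$$|\langle F, \psi\rangle|^2 \asymp \frac{\Lambda(\pi\otimes\pi_1\otimes\pi_2,\tfrac{1}{2})}{\Lambda^*(\pi,\mathrm{Ad},1)\prod_{i=1}^2 \Lambda^*(\pi_i,\mathrm{Ad},1)}\prod_v I_v.$$
The local integrals $I_v$ for $v \ne \pmf$ are $O_{\pi_1,\pi_2}(1)$; the key quantity is the sum of $I_\pmf(\varphi_{1,\pmf}\otimes\varphi_{2,\pmf}^{\pmf^n}\otimes\psi_\pmf)$ over the $(n - \crm(\pi_\pmf) + 1)$-dimensional space of $\Krm_0(\pmf^n)$-invariants in $\pi_\pmf$. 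Explicit new/oldform local computations, combined with the convexity bound for the triple product $L$-function, the sub-polynomial estimate \eqref{BoundAdjoint} on $\Lambda^*(\pi,\mathrm{Ad},1)$, and the standard count of representations with conductor dividing $\lmf$, produce the announced $\ell^\varepsilon$ bound. The continuous part is handled analogously using convexity in the spectral parameter, and the degenerate and Eisenstein-correction contributions contribute at the same order.

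The main obstacle is the local computation at $\pmf$, namely obtaining the correct dependence on $\ell$ of $\sum_{\psi_\pmf} I_\pmf$ over the oldform basis as $\crm(\pi_\pmf)$ varies. A secondary technical point is the rigorous justification of the limit $s \to 0$ in the deformation argument, which requires uniform convergence of the spectral series in a neighborhood of $s = 0$.
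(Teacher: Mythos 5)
Your proposal misses the two structural ideas that make the paper's proof work, and some of the steps as written would not close.

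\textbf{On your Case 1.} You lump together the all-cuspidal and mixed (one cuspidal, one Eisenstein) situations. The all-cuspidal case is indeed trivial, but the paper disposes of it in one line by ordinary Cauchy--Schwarz and right-translation invariance of the genuine $L^4$-norm: $\int|\varphi_1\varphi_2^{\lmf}|^2\leqslant\|\varphi_1\|_{L^4}^2\|\varphi_2^{\lmf}\|_{L^4}^2=\|\varphi_1\|_{L^4}^2\|\varphi_2\|_{L^4}^2$. In the mixed case this breaks because $\|\varphi_2^{\lmf}\|_{L^4}$ diverges, and your substitution $g\mapsto gh^{-1}$ does not help: after the change of variables you are facing the same statement with $\lmf$ moved to the other factor. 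The heuristic that a $\pmf$-adic translation ``pushes the effective support into the cusp by an amount $\ell$'' conflates a finite-place Hecke-type translation with an archimedean dilation; what actually changes is the first nonvanishing Fourier coefficient, and turning this into an $\ell$-uniform bound on $\int|\varphi_1^{h^{-1}}|^2|\varphi_2|^2$ needs a uniform control on $\varphi_2$ in the bulk (an Eisenstein sup-norm input) that you do not supply. The paper handles the mixed case by the same deformation/Plancherel argument as the purely Eisenstein case; it is not an $O(1)$ triviality.

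\textbf{On your Case 2.} Two problems. First, your deformation $F(s)=\varphi_1\varphi_2(s)^{\lmf}$ deforms only one of the two factors, so the exponents contributed by $\varphi_1$ to $F$ and to $F(s)$ are identical and the disjointness hypothesis of Theorem \ref{TheoremParseval} (and of Lemma \ref{Lemma}) fails. The paper deforms both factors with four independent speeds, $\Phi_1(s)=\varphi_1(s)\overline{\varphi}_1(\bar{s}/2)$ and $\Phi_2(s)=\varphi_2(\bar{s}/3)\overline{\varphi}_2(s/4)$, precisely to guarantee this disjointness. Second, and more seriously, you omit the Hecke-operator identity \eqref{RelationHecke}. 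Without it, your spectral expansion runs over all $\pi$ with $\crm(\pi)\mid\lmf$, and the strategy of invoking Proposition \ref{PropositionIntegralRepresentation} plus the \emph{convexity} bound for $L(\pi\otimes\pi_1\otimes\pi_2,\tfrac12)$ cannot produce $\ell^{\varepsilon}$: there are $\asymp\ell$ representations of level $\lmf$, the local factor at $\pmf$ from \eqref{ellv} is $\asymp\ell^{-1}$, and convexity for the degree-$8$ $L$-function of conductor $\asymp\ell^4$ gives $\ell^{1+\varepsilon}$ per term, netting $\ell^{1+\varepsilon}$ overall. Only Lindelöf would give $\ell^{\varepsilon}$ this way. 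The paper's route avoids $L$-function bounds entirely: identity \eqref{RelationHecke} converts the $a(\varpi_\pmf^n)$-translate into $\Trm_{\lmf}$ (at the cost of an induction in $n$), so the generic spectral sum \eqref{Momentq} runs over \emph{unramified} $\pi$ with the $\ell$-dependence isolated in the Hecke eigenvalue $\lambda_\pi(\lmf)$; Cauchy--Schwarz then bounds it by $(n+1)\ell^{\vartheta}(\Gscr(\Phi_1)\Gscr(\Phi_2))^{1/2}$ with $\Gscr(\Phi_i)$ independent of $\ell$ (see \eqref{BoundGenericTerm}--\eqref{SpectralExpPhi_i}). The $\ell^{1/2}/\gamma_n\asymp\ell^{-1/2}$ prefactor from \eqref{Eq1} then makes this harmless, and the degenerate Eisenstein correction $\Dscr$ (Paragraph \ref{Paragraph}), carrying a Hecke eigenvalue $\ll\ell^{1/2+\varepsilon}$, becomes the dominant $\ell^{\varepsilon}$ term; it is estimated by the explicit local computation of Section \ref{SectionLocal}, not by $L$-function machinery. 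You should rebuild Case 2 around \eqref{RelationHecke}, the two-sided deformation \eqref{DefinitionPhi}, and the induction on $n$.
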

\begin{remq}\label{Remarkcuspidal} The above Proposition becomes of course trivial if $\pi_1$ and $\pi_2$ are cuspidal. Indeed in this case we just apply Cauchy-Schwarz to get the bound $||\varphi_1||_{L^4}^2||\varphi_2||^2_{L^4}$. We will therefore assume for the rest of Section \ref{SectionL^2} that at least one of the $\pi_i$'s is Eisenstein.
\end{remq}

\subsubsection{A deformation argument}\label{SectionDeformation} Assume for example that $\pi_1=1\boxplus 1$ is the standard Eisenstein series induced from the trivial characters. Then even if the regularized integral is well defined, we cannot apply Plancherel formula \ref{TheoremParseval} to the regularized inner product $\langle |\varphi_1|^2,|\varphi_2^{\lmf}|^2\rangle_{\mathrm{reg}}$ because the exponent of $|\varphi_1|^2$ is $|\cdot|$ ! To remedy this situation, we can use some deformation techniques which were developped by Michel and Venkatesh in \cite[§ 5.2.7]{subconvexity}. For $s \in \Cbb$, we write $\pi_i(s)$ for the $s$-deformation; that is $\pi_i(s)=\pi_i$ if $\pi_i$ is cuspidal, and $\pi_i(s)=|\cdot|^s\boxplus |\cdot|^{-s}$ if $\pi_i$ is Eisenstein. We write $\varphi_i(s)\in \pi_i(s)$ for the corresponding $s$-deformation and define
\begin{equation}\label{DefinitionPhi}
\Phi_1(s):=\varphi_1(s)\overline{\varphi}_1\left(\frac{\sbar}{2}\right) \ \ \mathrm{and} \ \ \Phi_2(s):=\varphi_2\left(\frac{\sbar}{3}\right)\overline{\varphi}_2\left( \frac{s}{4}\right).
\end{equation}
Then 
\begin{equation}\label{PDeforme}
s\longmapsto \Pscr(\lmf;s):= \int_{\X}^{\mathrm{reg}}\Phi_1(s) \overline{\Phi}_2(s)^{\lmf}
\end{equation}
is a well-defined holomorphic function in a neighborhood of the origin and its value at $s=0$ is given by the original quantity that we want to study, i.e.
\begin{equation}\label{Originalquantity}
\Pscr(\lmf;0)=\int_{\X}^{\mathrm{reg}}\left| \varphi_1\varphi_2^{\lmf}\right|^2.
\end{equation}
The result of Proposition \ref{PropositionL2norm} is of course true if $n=0$ (recall $\lmf=\pmf^n$). We thus assume the conclusion for every $m<n$ and we want to prove it for $m=n$. Using the identity \eqref{RelationHecke}, we make the Hecke operator $\Trm_{\lmf}$ appear as follows
\begin{equation}\label{Eq1}
\Pscr(\lmf;s)= \frac{\ell^{1/2}}{\gamma_n}\int_{\X}^{\mathrm{reg}} \Trm_{\lmf}(\Phi_1(s))\overline{\Phi}_2(s)- \sum_{1\leqslant k\leqslant \frac{n}{2}}\frac{\gamma_{n-2k}}{\gamma_n}\Pscr(\pmf^{n-2k};s),
\end{equation}
where the $\gamma_i$'s are given by \eqref{ValueGamma} and the sum on the righthand side appears only if $n\geqslant 2$. Using the induction hypothesis yields
\begin{equation}
\Pscr(\lmf;0)= \frac{\ell^{1/2}}{\gamma_n}\int_{\X}^{\mathrm{reg}} \Trm_{\lmf}(\Phi_1(0))\overline{\Phi}_2(0) + O_{\varepsilon}\left(p^{\varepsilon(n-2)}\right).
\end{equation}
We want to apply Theorem \ref{TheoremParseval} to the regularized inner product in \eqref{Eq1} for $s$ in a suitable open subset. For explicitness, we take
$$\Omega=\{s\in\Cbb \ | \ 0<|s|<\delta\}$$
for some absolute $\delta>0$. For $s\in\Omega$, the functions $\Phi_1(s)$ and $\Phi_2(s)$ satisfy the hypothesis of the regularized Plancherel formula. We obtain therefore the following decomposition
\begin{equation}\label{Parseval}
\int_{\X}^{\mathrm{reg}}\Trm_{\lmf}(\Phi_1(s))\overline{\Phi}_2(s)=\mathscr{G}(\lmf;s)+\Oscr(\lmf;s)+\Dscr(\lmf;s),
\end{equation}
where $\mathscr{G}(\lmf;s)$ denotes the generic part, $\Oscr(\lmf;s)$ the one-dimensional part and the degenerate contribution is given by 
\begin{equation}\label{DefinitionDegenerate}
\Dscr(\lmf;s)=\langle \Trm_{\lmf}\Phi_1(s), \Ecal_2(s)\rangle_{\mathrm{reg}}+\langle \Trm_{\lmf}\Ecal_1(s),\Phi_2(s)\rangle_{\mathrm{reg}},
\end{equation}
with $\Ecal_i(s)=\Ecal(\Phi_i(s))$. Note that we implicitly used the fact that 
$$\Trm_{\lmf}\Ecal_i(s)=\Ecal(\Trm_{\lmf}\Phi_i(s)),$$
which comes from the $\GL_2(\Abb_\F)$-equivariance in Proposition \ref{PropositionL2}. The generic part also defines an holomorphic function on $\Omega$. It is moreover regular at $s=0$; its value being given by (observe that $\Phi_2(s)$ is invariant under the group $\GL_2(\widehat{\Ocal}_\F)$)
\begin{equation}\label{Momentq}
\begin{split}
 & \Gscr(\lmf;0)= \sum_{\substack{\pi \ \mathrm{cuspidal} \\ \crm(\pi)=\Ocal_\F}}\lambda_\pi(\lmf)\sum_{\psi\in\Bscr(\pi,\Ocal_\F)} \langle \Phi_1,\psi\rangle\langle \psi,\Phi_2\rangle \\ 
 + & \ \frac{1}{4\pi}\sum_{\substack{\chi\in\widehat{\F^\times\setminus\Abb_\F^{1}} \\ \crm(\chi)=\Ocal_\F}}\int_{-\infty}^\infty\lambda_{\chi,it}(\lmf)\sum_{\psi_{it}\in\Bscr(\chi,\chi^{-1},it,\Ocal_\F)}\langle\Phi_1,\Erm(\psi_{it})\rangle_\reg \langle \Erm(\psi_{it}),\Phi_2\rangle_{\mathrm{reg}}dt ,
\end{split}
\end{equation}
where $\Phi_i=\Phi_i(0)$ and $\Bscr(\pi,\Ocal_\F)$ denotes an orthonormal basis of $\GL_2(\widehat{\Ocal}_\F)$-invariant vectors in $\pi$ and similarly for $\Bscr(\chi,\chi^{-1},it,\Ocal_\F)$. Because the Hecke eigenvalues satisfies the upper bound $|\lambda_\pi(\lmf)|\leqslant (n+1)\ell^{\vartheta}$, we obtain using Cauchy-Schwarz inequality
\begin{equation}\label{BoundGenericTerm}
|\Gscr(\lmf;0)|\leqslant (n+1)\ell^{\vartheta}\Big( \Gscr(\Phi_1)\Gscr(\Phi_2)\Big)^{1/2},
\end{equation}
with
\begin{equation}\label{SpectralExpPhi_i}
\begin{split}
\Gscr(\Phi_i):= & \ \sum_{\substack{\pi \ \mathrm{cuspidal} \\ \crm(\pi)=\Ocal_\F}}\sum_{\psi\in \Bscr(\pi,\Ocal_\F)}\left|\langle \Phi_i,\psi\rangle \right|^2 \\
 + & \ \frac{1}{4\pi}\sum_{\substack{\chi\in\widehat{\F^\times\setminus\Abb_\F^{1}} \\ \crm(\chi)=\Ocal_\F}}\int_{-\infty}^\infty\sum_{\psi_{it}\in\Bscr(\chi,\chi^{-1},it,\Ocal_\F)}\left|\langle\Phi_i,\Erm(\psi_{it})\rangle_\reg  \right|^2dt. 
\end{split}
\end{equation}
\begin{remq}\label{Dependance}
The quantity \eqref{SpectralExpPhi_i} is finite and depends only on $\pi_1,\pi_2$ and our choices at the infinite component $\varphi_{1,\infty}$ and $\varphi_{2,\infty}$. To be more explicit, if $\pi_i$ is cuspidal, then adding the one-dimensional contribution to \eqref{SpectralExpPhi_i} (only the trivial character counts) completes the spectral expansion and thus we obtain 
$$\Gscr(\Phi_i)=||\varphi_i||_{L^4}^4-||\varphi_i||_{L^2}^4.$$
If $\pi_i=1\boxplus 1$, then Lemma \ref{LemmaDegenerateContribution} below tells us that for sufficiently small $\varepsilon>0$, we have 
$$\Gscr(\Phi_i)\leqslant ||\varphi_i||^4_{L^4,\mathrm{reg}}+2\max_{|s|=\varepsilon}\left|\langle \Ecal(\Phi_i(s)),\Phi_i(\sbar/2)\rangle_{\mathrm{reg}} \right|,$$
where the last degenerate contribution will be computed explicitly in Section \ref{SectionDegenerate}.
\end{remq}
Assuming now that $\pi_1$ is Eisenstein (c.f. Remark \ref{Remarkcuspidal}), the one-dimensional part $\Oscr(\lmf;s)$ is zero because the exponents of $\varphi_1(s)$ are disjoint from the exponents of $\varphi_1(\sbar/2)$ and thus the contribution is zero in this case (see Lemma \ref{Lemma}). 
Moreover, from the above observations and \eqref{Parseval}, we obtain :
\begin{lemme}\label{LemmaDegenerateContribution} The additional contribution $s\mapsto \Oscr(\lmf;s)+\Dscr(\lmf;s)$ defines an holomorphic function on the open disc of radius $\delta$. In particular, for every $0<\varepsilon<\delta$, we have by Cauchy's formula
$$|\Oscr(\lmf;0)+\Dscr(\lmf;0)|\leqslant \max_{|s|=\varepsilon}|\Oscr(\lmf;s)+\Dscr(\lmf;s)|=\max_{|s=\varepsilon|}|\Dscr(\lmf;s)|.$$
\end{lemme}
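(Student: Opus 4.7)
The plan is to rewrite $\Oscr(\lmf;s)+\Dscr(\lmf;s)$ on the punctured disc $\Omega=\{s\in\Cbb : 0<|s|<\delta\}$ as the difference of two functions, each of which admits a holomorphic extension across $s=0$, and then to apply Cauchy's integral formula together with the vanishing of $\Oscr$ already observed in the paragraph preceding the statement.

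More precisely, \eqref{Parseval} gives, for $s\in\Omega$,
\begin{equation*}
\Oscr(\lmf;s)+\Dscr(\lmf;s)=\int_{\X}^{\mathrm{reg}}\Trm_\lmf(\Phi_1(s))\overline{\Phi}_2(s)-\Gscr(\lmf;s).
\end{equation*}
The integral on the right is expressed, via the same manipulation as in \eqref{Eq1}, as an explicit $\Cbb$-linear combination of the functions $\Pscr(\pmf^{n-2k};s)$, $0\leqslant k\leqslant n/2$. Each of these is already known to be holomorphic on the full disc $\{|s|<\delta\}$ (as noted right after \eqref{PDeforme}), so the integral extends holomorphically to the full disc. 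The term $\Gscr(\lmf;s)$ is also regular at $s=0$, as observed directly after \eqref{Momentq}: its cuspidal and continuous spectral sums are uniformly controlled in $s$ by the bounds recorded in Remark \ref{Dependance}. Consequently $\Oscr+\Dscr$, initially defined only on $\Omega$, extends holomorphically to the open disc of radius $\delta$, which is the first assertion.

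Holomorphicity in hand, Cauchy's integral formula applied at $s=0$ on the contour $|s|=\varepsilon<\delta$ immediately yields
\begin{equation*}
|\Oscr(\lmf;0)+\Dscr(\lmf;0)|\leqslant\max_{|s|=\varepsilon}|\Oscr(\lmf;s)+\Dscr(\lmf;s)|.
\end{equation*}
The final equality in the statement then follows from the discussion immediately preceding the lemma: by Remark \ref{Remarkcuspidal} we may assume $\pi_1$ is Eisenstein, so that the exponent sets $\Exp(\varphi_1(s))$ and $\Exp(\overline{\varphi}_1(\sbar/2))$ are disjoint whenever $s\neq 0$. Lemma \ref{Lemma} therefore forces $\Oscr(\lmf;s)\equiv 0$ on $\Omega$, and on the circle $|s|=\varepsilon$ we get $\Oscr+\Dscr=\Dscr$, finishing the argument.

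The only delicate conceptual point is justifying that $\Gscr(\lmf;s)$ is genuinely regular at $s=0$ — a priori the one-dimensional and Eisenstein spectral terms could develop poles coming from inner products with the degenerate exponents of $\Phi_i(s)$. This however is precisely the reason for isolating the degenerate contribution $\Dscr$ through \eqref{DefinitionDegenerate} in the application of the regularized Plancherel formula: all such singular behavior is absorbed into $\Dscr$, leaving the generic sum $\Gscr$ holomorphic at the origin.
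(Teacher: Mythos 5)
Your argument is correct and is essentially the one the paper leaves implicit: you rewrite $\Oscr(\lmf;s)+\Dscr(\lmf;s)=\int^{\reg}_{\X}\Trm_\lmf(\Phi_1(s))\overline{\Phi}_2(s)-\Gscr(\lmf;s)$, observe via \eqref{Eq1} that the first term is a finite linear combination of the holomorphic functions $\Pscr(\pmf^{n-2k};s)$, invoke the stated regularity of $\Gscr$ at $s=0$, and then apply the maximum modulus principle together with the exponent-disjointness vanishing of $\Oscr$. Two cosmetic slips: the regularity of $\Gscr$ at $s=0$ is asserted in the sentence \emph{preceding} \eqref{Momentq}, not after it, and Remark \ref{Dependance} only addresses finiteness of the associated Cauchy--Schwarz majorant $\Gscr(\Phi_i)$ rather than holomorphicity of $\Gscr(\lmf;s)$ in $s$, so it is cleaner to simply cite the paper's assertion directly than to route through that remark.
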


\subsubsection{A first estimation of $\Dscr(\lmf;s)$}\label{Paragraph} Let $\varepsilon>0$ be a small real number. For $|s|=\varepsilon$, we want to give an upper bound for the degenerate term $\Dscr(\lmf;s)$ defined in \eqref{DefinitionDegenerate}.  Because the Hecke operator $\Trm_\lmf$ is self-adjoint with respect to the regularized inner product, it is enough to bound the expression
$$\langle \Trm_{\lmf}\Ecal_1(s),\Phi_2(s)\rangle_{\mathrm{reg}}.$$
We can also assume that $\pi_1$ is Eisenstein, otherwise $\Ecal_1(s)=0$. It follows by Remark \ref{Remark1}  that the exponents of $\Phi_1(s)=\varphi_1(s)\overline{\varphi}_1(\sbar/2)$ are 
$$\left\{|\cdot|^{1+3s/2},|\cdot|^{1+s/2},|\cdot|^{1-s/2},|\cdot|^{1-3s/2}\right\},$$ and thus $\Ecal_1(s)$ is a sum of four Eisenstein series, each of them obtained from one of the following induced representations
$$|\cdot|^{1/2\pm 3s/2}\boxplus |\cdot|^{-1/2\mp 3s/2} \ \ \mathrm{and} \ \ |\cdot|^{1/2\pm s/2}\boxplus |\cdot|^{-1/2\mp s/2}.$$
Therefore, each of these Eisenstein series is an eigenfunction of the Hecke operator with eigenvalue of size at most $(n+1)\ell^{1/2+3\varepsilon/2}$ and thus
$$|\langle \Trm_{\lmf}\Ecal_1(s),\Phi_2(s)\rangle_{\mathrm{reg}}|\leqslant 4(n+1)\ell^{\frac{1+3\varepsilon}{2}}|\langle\Ecal_1(s),\Phi_2(s)\rangle_{\mathrm{reg}}|.$$
Of course the last contribution depends only on $\varepsilon$, $\pi_1,\pi_2$ and the infinite datas, which is satisfactory for Proposition \ref{PropositionL2norm}. However, as said in Remark \ref{Dependance}, it will be evaluated in details in the next section. Finally, the above estimation, together with Lemma \ref{LemmaDegenerateContribution}, \eqref{BoundGenericTerm}, \eqref{Parseval}, \eqref{Eq1} and \eqref{Originalquantity} give the conclusion of Proposition \ref{PropositionL2norm}.

\subsection{A degenerate term}\label{SectionDegenerate} \noindent We let $\pi_i$ and $\varphi_i$ be as before and set $\pi_3=1\boxplus 1$ with $\varphi_3=\Erm(f_3(0))$ be the unitary Eisenstein series associated to \eqref{DefinitionFlat}.
Given two small non-zero complex numbers $s,t$ with $|s|\neq |t|$, we wish to understand the value at the point 
\begin{equation}\label{Point}
\mathbf{P}(\lmf):= \varphi_1\otimes\varphi_2^{\lmf}\otimes\varphi_3(s)\otimes\varphi_3(t)^{\lmf}
\end{equation}
 of the $\GL_2(\Abb_\F)$-invariant linear functional
$$\mathscr{H}_{s,t} : \pi_1\otimes\pi_2\otimes\pi_3(s)\otimes\pi_3(t)\longrightarrow \Cbb$$
defined by the rule
$$\mathscr{H}_{s,t}(\phi_1,\phi_2,\phi_3,\phi_4) :=\int_{\X}^{\reg} \phi_1\phi_2\Ecal(\phi_3\phi_4).$$

The map $\mathscr{H}_{s,t}$ is analized in details in \cite[§ 5.2.9]{subconvexity}. In fact, expanding the constant term in the product $\varphi_3(s)\varphi_3(t)^\lmf$, we see that $\Ecal(\varphi_3(s)\varphi_3(t)^\lmf)$ is made of four Eisenstein series, each of them induced by one of the following section 
$$f_3(s)f_3(t)^{\lmf}\  ,  \ f_3(t)^{\lmf}\mathrm{M}f_3(s) \ , \ f_3(s)\mathrm{M}f_3(t)^{\lmf}\ , \ \mathrm{M}f_3(s)\mathrm{M}f_3(t)^{\lmf}.$$
The operator $\mathrm{M}: \pi_3(s)\rightarrow \pi_3(-s)$, $f\mapsto \int_{x\in \Abb_\F}f(wn(x)g)dx$ is the standard intertwiner. By \cite[§ 4.1.8]{subconvexity}, it factorizes as $\mathrm{M}=c\prod_v \mathrm{M}_v$ with $|c|=1$ and $\mathrm{M}_v : \pi_{3,v}(s)\rightarrow \pi_{3,v}(-s)$ has the property that for every place $v$ not dividing the discriminant of $\F$, it takes the spherical vector with value $1$ on $\Krm_v$ to the spherical vector with value $1$ on $\Krm_v$. In any cases, it preserves norms up to a scalar which depends only on $\psi_v$ (note that $\psi$ is fixed by the discriminant of $\F$ by § \ref{Discri}). It follows that
$$|\mathscr{H}_{s,t}(\mathbf{P}(\lmf))|\leqslant \sum_{\pm, \pm } \left|\mathscr{H}_{s,t}^{\pm,\pm}(\mathbf{P}(\lmf))\right|,$$
with
\begin{equation}\label{FactoHscr}
\mathscr{H}_{s,t}^{\pm,\pm}(\mathbf{P}(\lmf))=\frac{\Lambda_\F(1\pm 2s)\Lambda_\F(1\pm 2t)}{\Lambda_\F(2+2(\pm s\pm t))}\Lambda(\pi_1\otimes\pi_2\otimes|\cdot|^{\pm s+\pm t},1)\prod_{v}\mathscr{H}_{s,t,v}^{\pm,\pm}(\lmf_v),
\end{equation}
where for any finite $v$, $\lmf_v$ is the extended ideal $\lmf=\pmf^n$ in the complete ring $\Ocal_{\F_v}$ and the local factors are defined by
\begin{equation}
\mathscr{H}_{s,t,v}^{+,+}(\lmf_v):= \zeta_{\F_v}(2+2(s+t))\frac{\int_{\Nrm(\F_v)\setminus \PGL_2(\F_v)}\Wrm_{1,v}\Wrm_{2,v}^{\lmf_v} f_{3,v}(s)f_{3,v}(t)^{\lmf_v}}{L(\pi_{1,v}\otimes \pi_{2,v}\otimes |\cdot|^{s+t},1)\zeta_{\F_v}(1+2t)\zeta_{\F_v}(1+2s)}.
\end{equation}
The other terms $\mathscr{H}_v^{+,-},\mathscr{H}_v^{-,+}$ and $\mathscr{H}_v^{-,-}$ are defined similarly, by introducing the operator $\mathrm{M}_v$ in front of the $f_{3,v}$'s and by changing $s$ to $-s$ or $t$ to $-t$ as in \eqref{FactoHscr}. Because of our normalization, we have for any finite $v$ prime with the discriminant and $\pmf$
$$\mathscr{H}_{s,t,v}^{\pm,\pm}(\lmf_v)=1.$$
Moreover, for $v<\infty$ and $v\nmid \Delta_\F$
$$\mathscr{H}_{s,t,v}^{-,+}=\mathscr{H}_{-s,t,v}^{+,+} \ , \ \mathscr{H}_{s,t,v}^{+,-}=\mathscr{H}_{s,-t}^{+,+} \ , \ \mathscr{H}_{s,t}^{-,-}=\mathscr{H}_{-s,-t}^{+,+}.$$
We conclude that for small $\varepsilon>0$ and for $s,t$ such that $|s|=2|t|=\varepsilon$, we have 
\begin{equation}\label{LocalPeriod}
\mathscr{H}_{s,t}\left( \mathbf{P}(\lmf)\right)\ll_{\F,\varepsilon} \max_{\substack{|s|=\varepsilon \\ |t|=\varepsilon/2}}\left|\int_{\Nrm(\F_\pmf)\setminus \PGL_2(\F_\pmf)}\Wrm_{1,\pmf}\Wrm_{2,\pmf}^{\lmf}f_{3,\pmf}(s)f_{3,\pmf}(t)^{\lmf}\right|,
\end{equation}
where we recall that the bound depends also on $\pi_1,\pi_2$ and the choice of the infinite datas $\varphi_{1,\infty},\varphi_{2,\infty}$ and $f_{3,\infty}.$ Using Proposition \ref{PropositionLocal}, we obtain :
\begin{proposition}\label{PropositionDegenerate} Let $\varepsilon>0$ be a small real number. Then for all $s,t\in\Cbb$ with $|s|=2|t|=\varepsilon$, we have
$$\mathscr{H}_{s,t}\left(\mathbf{P}(\lmf)\right)\ll_{\F,\pi_1,\pi_2,\varepsilon} \ell^{-1+\varepsilon}.$$
\end{proposition}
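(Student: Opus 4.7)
The proof reduces to a purely local statement at $\pmf$ provided by Proposition \ref{PropositionLocal}. The inequality \eqref{LocalPeriod} has already isolated the $\pmf$-contribution: at every place $v \nmid \pmf\Delta_\F$ the local factor $\mathscr{H}_{s,t,v}^{\pm,\pm}$ equals $1$ by our choice of unramified data, while the archimedean components, the finitely many places dividing $\Delta_\F$, and the $L$-values $\Lambda(\pi_1 \otimes \pi_2 \otimes |\cdot|^{\pm s \pm t}, 1)$ are bounded in terms of $\F, \pi_1, \pi_2, \varepsilon$ for $|s| = 2|t| = \varepsilon$. Hence it suffices to control the single local integral
\[
J_\pmf(s,t) := \int_{\Nrm(\F_\pmf) \setminus \PGL_2(\F_\pmf)} \Wrm_{1,\pmf}\, \Wrm_{2,\pmf}^{\lmf}\, f_{3,\pmf}(s)\, f_{3,\pmf}(t)^{\lmf}.
\]

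To attack $J_\pmf(s,t)$, the plan is to use the Iwasawa decomposition $g = a(y)k$ on $\Nrm(\F_\pmf) \setminus \PGL_2(\F_\pmf)$. Since $\pi_{1,\pmf}$ and $\pi_{2,\pmf}$ are unramified, the Casselman--Shalika formula gives $\Wrm_{i,\pmf}(a(y)k) = \Wrm_{i,\pmf}(a(y))$, supported on $v(y) \geqslant 0$ with explicit geometric decay, and the Godement sections $f_{3,\pmf}(s)$ attached to $\mathbf{1}_{\Ocal_{\F_\pmf}^2}$ admit an explicit Iwasawa expansion. The interesting step is handling the two translates $\Wrm_{2,\pmf}^{\lmf}$ and $f_{3,\pmf}(t)^{\lmf}$, which require expanding the product $k \cdot \left(\begin{smallmatrix} 1 & \\ & \varpi_\pmf^n \end{smallmatrix}\right)$ for $k \in \Krm_\pmf$ via the Bruhat-type partition of $\Krm_\pmf$ into shells indexed by the depth $0 \leqslant j \leqslant n$ at which $k$ meets the small cell. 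Each shell contributes an inner integral in $y$ weighted by a geometric factor in powers of $p$, and summation over shells yields the decisive saving $\ell^{-1}$ that beats the trivial $O(1)$ bound, while the dependence on $s,t$ on the small bi-disc contributes at most $\ell^\varepsilon$.

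This local computation is precisely the content of Proposition \ref{PropositionLocal}, whose conclusion $J_\pmf(s,t) \ll_{\F, \pi_1, \pi_2, \varepsilon} \ell^{-1+\varepsilon}$, inserted into \eqref{LocalPeriod}, proves the proposition. The main obstacle is concentrated in that local estimate: one must extract the sharp polynomial saving in $\ell$ (rather than merely $O(1)$) uniformly in $s,t$ for $|s| = 2|t| = \varepsilon$, and handle the four sign configurations $(\pm, \pm)$ originating from the intertwiners $\mathrm{M}$ used to expand $\Ecal(\varphi_3(s) \varphi_3(t)^{\lmf})$; once this is achieved, the global assembly via \eqref{FactoHscr} is immediate and yields the claimed bound.
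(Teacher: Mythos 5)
Your proposal is correct and follows essentially the same route as the paper: \eqref{LocalPeriod} reduces the estimate to the single local zeta integral at $\pmf$, the unramified/archimedean/$\Delta_\F$ factors are absorbed into the implied constant, and the crucial local saving is supplied by Proposition \ref{PropositionLocal} (whose proof indeed proceeds by the $\Krm_0(\varpi^{i-1})\setminus\Krm_0(\varpi^i)$ shell decomposition you sketch, cf.\ \eqref{DecompositionK-integral} and Lemma \ref{LemmaFcal}). One small correction: the identity $\Wrm_{i,\pmf}(a(y)k)=\Wrm_{i,\pmf}(a(y))$ is just right $\Krm_\pmf$-invariance of the spherical Whittaker vector, not the content of Casselman--Shalika (which instead gives the explicit values $\Wrm_i(a(\varpi^r))$ used to obtain the geometric decay in \eqref{EstimationIcal}).
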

\begin{remq}\label{Propositionsmalldeformation} Proposition \ref{PropositionDegenerate} stills valid for a small deformation of $\pi_1$ and $\pi_2$, which is actually how degenerate terms appear in our paper (see for example \eqref{Dege1},\eqref{Dege2}).
\end{remq}

\subsection{A generic term}\label{SectionGeneric} The combination of Propositions \ref{PropositionL2norm} and \ref{PropositionDegenerate} gives the following estimate for a particular generic expansion :
\begin{proposition}\label{PropositionGeneric} Let $\varepsilon>0$. Then the generic expansion
\begin{equation}\label{GenericExpansion}
\begin{split}
 & \sum_{\substack{\pi \ \mathrm{cuspidal} \\ \crm(\pi)| \lmf}}\sum_{\psi\in\Bscr(\pi,\lmf)} \left|\langle \varphi_1\varphi_2^{\lmf},\psi\rangle \right|^2 \\ 
 + & \ \sum_{\substack{\chi\in\widehat{\F^\times\setminus\Abb_\F^{1}} \\ \crm(\chi)^2|\lmf}}\int_{-\infty}^\infty\sum_{\psi_{it}\in\Bscr(\chi,\chi^{-1},it,\lmf)}\left|\langle\varphi_1\varphi_2^{\lmf},\Erm(\psi_{it})\rangle_\reg\right|^2  \frac{dt}{4\pi} ,
\end{split}
\end{equation}
is bounded, up to a constant depending on $\pi_1,\pi_2,\F$ and $\varepsilon$, by $\ell^{\varepsilon}$.
\end{proposition}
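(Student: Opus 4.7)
The plan is to identify the generic expansion \eqref{GenericExpansion} with the generic Plancherel part of the regularized L$^2$-mass $\int^\reg_\X |\varphi_1\varphi_2^\lmf|^2$. Note that $\varphi_1\varphi_2^\lmf$ is right-$\Krm_0(\lmf)$-invariant (since $\varphi_1$ is unramified and $\varphi_2^\lmf$ is stabilized by $\Krm_0(\lmf)$), so only representations of conductor dividing $\lmf$ contribute, matching the level conditions in \eqref{GenericExpansion}. Moreover, each term in \eqref{GenericExpansion} is a non-negative absolute square, so any upper bound on the total spectral mass modulo a controlled error automatically yields the proposition.

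Since $\varphi_1\varphi_2^\lmf$ generally does not decay (when some $\pi_i$ is Eisenstein), Theorem \ref{TheoremParseval} does not apply directly to $\langle\varphi_1\varphi_2^\lmf,\varphi_1\varphi_2^\lmf\rangle_\reg$: the two factors share the same exponents. I would copy the deformation machinery of Section \ref{SectionDeformation}, introducing $\Phi_1(s),\Phi_2(s)$ obtained from $\varphi_1\varphi_2^\lmf$ by linear-in-$s$ deformations chosen so that $\Phi_1(0)\overline{\Phi_2(0)}=|\varphi_1\varphi_2^\lmf|^2$, while for $0<|s|<\delta$ the exponents of $\Phi_1(s)$ and of $\overline{\Phi_2(s)}$ are pairwise disjoint. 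Theorem \ref{TheoremParseval} then gives the decomposition
$$\mathscr{Q}(\lmf;s) \ := \ \langle \Phi_1(s),\Phi_2(s)\rangle_\reg \ = \ \mathscr{G}(\lmf;s) + \Oscr(\lmf;s) + \Dscr(\lmf;s)$$
on a punctured disc. The one-dimensional term $\Oscr$ either vanishes (by disjointness of exponents, as in Section \ref{SectionDeformation}) or is a non-negative sum of absolute squares at $s=0$; in either case it can be discarded from the upper-bound inequality. The generic part $\mathscr{G}(\lmf;s)$ extends holomorphically to $s=0$ with $\mathscr{G}(\lmf;0)$ equal to \eqref{GenericExpansion}, while $\mathscr{Q}(\lmf;0)=\int^\reg_\X|\varphi_1\varphi_2^\lmf|^2 \ll_{\pi_1,\pi_2,\varepsilon}\ell^\varepsilon$ by Proposition \ref{PropositionL2norm}.

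It remains to bound the degenerate contribution $\Dscr(\lmf;s)$, which is a finite sum of regularized pairings $\langle \Phi_i(s),\Ecal(\Phi_j(s))\rangle_\reg$ between automorphic forms and Eisenstein projections. By the computation carried out in Section \ref{SectionDegenerate}, each such pairing is, up to the small deformation of $\pi_1,\pi_2$ explicitly allowed in Remark \ref{Propositionsmalldeformation}, an instance of the bilinear form $\mathscr{H}_{s,t}$ evaluated at the point $\mathbf{P}(\lmf)$. Proposition \ref{PropositionDegenerate} then gives $\Dscr(\lmf;s)\ll_{\F,\pi_1,\pi_2,\varepsilon}\ell^{-1+\varepsilon}$ uniformly on the circle $|s|=\varepsilon$, and Cauchy's integral formula (exactly as in Lemma \ref{LemmaDegenerateContribution}) transports this bound to $s=0$. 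Subtracting, $\mathscr{G}(\lmf;0)\ll\ell^\varepsilon$, which is the desired estimate by non-negativity.

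The main technical obstacle is arranging the deformation so that (i) the generic part extends holomorphically to $s=0$ with value exactly \eqref{GenericExpansion}, and (ii) each piece of $\Dscr(\lmf;s)$ factors through the bilinear form $\mathscr{H}_{s,t}$ of Section \ref{SectionDegenerate}, so that Proposition \ref{PropositionDegenerate} can be invoked. Both requirements have already been secured by the explicit computation of exponents of $\Phi_i(s)$ in Sections \ref{SectionDeformation}--\ref{SectionDegenerate}, so the present proposition is essentially a bookkeeping combination of Propositions \ref{PropositionL2norm} and \ref{PropositionDegenerate} with the non-negativity of \eqref{GenericExpansion}.
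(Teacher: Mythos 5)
Your proposal matches the paper's own argument essentially step for step: decompose the regularized mass $\int^{\reg}_{\X}|\varphi_1\varphi_2^{\lmf}|^2$ via the regularized Plancherel formula using the deformation trick, identify the generic part at $s=0$ with \eqref{GenericExpansion}, bound the mass itself by Proposition \ref{PropositionL2norm}, kill or absorb the one-dimensional piece by disjointness of exponents (Eisenstein case) or by non-negativity (cuspidal case), and control the degenerate piece via Proposition \ref{PropositionDegenerate} together with Remark \ref{Propositionsmalldeformation} and a Cauchy estimate on a small circle. The only cosmetic difference is that the paper treats the all-cuspidal case separately (where no deformation is needed and the generic part is literally $\|\varphi_1\varphi_2^{\lmf}\|^2_{L^2}$ minus the one-dimensional contribution), whereas you fold it into the uniform discussion, but that does not affect correctness.
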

\begin{proof}
Assume first that both $\pi_i$ are cuspidal. Then the expansion \eqref{GenericExpansion} is equal to 
$$\left|\left|\varphi_1\varphi_2^{\lmf}\right|\right|^2_{L^2}-V_{\F}^{-1}\sum_{\chi^2=1}\left| \left\langle \varphi_1\varphi_2^\lmf, \varphi_{\chi}\right\rangle\right|^2.$$
The $L^2$-norm is bounded by $1$ by Proposition \ref{PropositionL2}. The one-dimensional contribution is zero if $\pi_1$ is not isomorphic to a quadratic twist of $\pi_2$. Otherwise, since the $\pi_i$'s are unramified at the finite places, there exists at most one quadratic $\chi$ such that $\pi_1\simeq \pi_2\otimes \chi$ (see \cite[p. 3]{rama}). For such a $\chi$, we have using the identity \eqref{RelationHecke} and the temperedness
\begin{equation}\label{BoundIntegral}
\left| \int_{\X}\varphi_1\overline{\varphi}_2^{\lmf}\varphi_\chi\right| \leqslant \zeta_{\F_{\pmf}}(1)\frac{n+1}{\ell^{1/2}}||\varphi_1||_{L^2}||\varphi_2||_{L^2},
\end{equation}
which gives the result in the case where $\pi_1$ and $\pi_2$ are cuspidal. 

Assume now that at least one of the $\pi_i$'s is Eisenstein. Then returning to Section \ref{SectionDeformation} and applying Theorem \ref{TheoremParseval} to \eqref{PDeforme} with $\Phi_1(s)=\varphi_1(s)\varphi_2^{\lmf}(s/2)$ and $\Phi_2(s)=\varphi_1(\sbar/3)\varphi_2^\lmf(\sbar/4)$ in \eqref{DefinitionPhi}, we first oberve that there is no one-dimensional contribution. Indeed, this is because of the disjointness of the exponents. Hence by the analysis made in the preceding two sections, the expansion \eqref{GenericExpansion} is equal to
$$\Pscr(\lmf;0)+\mathrm{additional \ terms}.$$
By Lemma \ref{LemmaDegenerateContribution}, this additional contribution is bounded by
\begin{equation}\label{Dege1}
\max_{|s|=\varepsilon}\left\{\left|\left\langle \Phi_1(s),\Ecal\left(\Phi_2(s)\right)\right\rangle_{\mathrm{reg}}\right|+\left|\left\langle \Ecal\left(\Phi_1(s)\right),\Phi_2(s)\right\rangle_{\mathrm{reg}}\right|\right\}.
\end{equation}
Hence the conclusion follows from Proposition \ref{PropositionL2} for $\Pscr(\lmf;0)$, Proposition \ref{PropositionDegenerate} and Remark \ref{Propositionsmalldeformation} for the above degenerate term.
\end{proof}

%
%
%
\section{A Symmetric Period}\label{SectionSymmetric}
Let $\pi_1,\pi_2$ and $\varphi_i\in\pi_i$ as in Section \ref{SectionEstimation}. We take $\qmf$ a squarefree ideal of $\Ocal_\F$ and $\lmf$ an integral ideal of the form $\pmf^n$ with $n\in\Nbb$ and $\pmf\in\mathrm{Spec}(\Ocal_\F)$ coprime with $\qmf$. We write $q,p,\ell$ for the norms of $\qmf,\pmf$ and $\lmf$ respectively. We also adopt the convention that all $\ll$ involved in this section depend implicitly on the infinite datas $\varphi_{i,\infty}$ (c.f. Remark \ref{RemarkInfinite}). Setting
\begin{equation}\label{DefinitionPhi}
\Phi=\varphi_1\varphi_2^{\qmf},
\end{equation}
we consider the period
\begin{equation}\label{ThePeriod}
\Pscr_\qmf(\lmf,\Phi,\Phi) := \int_{\X}^{\mathrm{reg}}\Trm_{\lmf}(\Phi) \overline{\Phi}=\left\langle \Trm_{\lmf}(\Phi),\Phi \right\rangle_{\mathrm{reg}}.
\end{equation}

\subsection{Expansion in the $\qmf$-aspect}
Noting that $\Phi_1=\varphi_1\varphi_2^\qmf$ is an automorphic form invariant under the group $\Krm_0(\qmf)$, we apply Plancherel formula to the regularized inner product \eqref{ThePeriod} in the space of forms of level $\qmf$. Using the same deformation argument of Section \ref{SectionDeformation} and the analysis of the additional contribution provided by Lemma \ref{LemmaDegenerateContribution} gives the following decomposition of the considered period
\begin{equation}\label{Expansion1}
\Pscr_{\qmf}(\lmf,\Phi,\Phi)= \Gscr_{\qmf}(\lmf,\Phi,\Phi)+ \Cscr_1 +\mathrm{additional \ terms},
\end{equation}
where the generic part is given by
\begin{equation}\label{GenericExpansionQ}
\begin{split}
 \Gscr_{\qmf}(\lmf,\Phi,\Phi)= & \sum_{\substack{\pi \ \mathrm{cuspidal} \\ \crm(\pi)| \qmf}}\lambda_\pi(\lmf)\sum_{\psi\in\Bscr(\pi,\qmf)} \left|\langle \varphi_1\varphi_2^{\qmf},\psi\rangle \right|^2 \\ 
 + & \ \sum_{\substack{\chi\in\widehat{\F^\times\setminus\Abb_\F^{1}} \\ \crm(\chi)=\Ocal_\F}}\int_{-\infty}^\infty \lambda_{\chi,it}(\lmf)\sum_{\psi_{it}\in\Bscr(\chi,\chi^{-1},it,\qmf)}\left|\langle\varphi_1\varphi_2^{\qmf},\Erm(\psi_{it})\rangle_\reg\right|^2  \frac{dt}{4\pi}.
\end{split}
\end{equation}
Here $\Cscr_1$ is the one-dimensional contribution which appears only if both $\pi_1$ and $\pi_2$ are cuspidal and there exists a quadratic character $\chi$ of $\F^\times\setminus \Abb_\F^\times$ such that $\pi_1\simeq \pi_2\otimes\chi$ ($\chi=1$ if $\pi_1=\pi_2$ for example). In this case we obtain since $\chi$ is also unramified at the finite places and by \eqref{RelationHecke} (recall also that $\qmf$ is squarefree)
\begin{alignat}{1}
\Cscr_1= & \ V_{\F}^{-1}\langle \Trm_{\lmf}(\Phi),\varphi_\chi\rangle\langle \varphi_\chi,\Phi\rangle = \frac{\chi(\lmf)\deg(\Trm_\lmf)}{V_\F}\left|\left\langle \varphi_1\varphi_2^{\qmf},\varphi_\chi\right\rangle\right|^2 \nonumber\\
= & \ \lambda_{\pi_2}(\qmf)^2\frac{\zeta_{\qmf}(2)^2\chi(\lmf)\deg(\Trm_\lmf)}{q\zeta_{\qmf}(1)^2 V_\F}\left|\left\langle \varphi_1\varphi_2,\varphi_\chi\right\rangle\right|^2\label{q-error},
 \end{alignat}
where $\zeta_\qmf(s)=\prod_{v|\qmf}\zeta_{\F_v}(s)$ is the partial Dedekind zeta function and the degree of the Hecke operator $\Trm_\lmf$ is defined by 
\begin{equation}
\deg(\Trm_\lmf) := \frac{1}{\ell^{1/2}} \sum_{0\leqslant k\leqslant \frac{n}{2}}\gamma_{n-2k} \leqslant \ell^{1/2}\zeta_{\F_\pmf}(1).
\end{equation}
Hence since $\pi_2$ is tempered at all the places dividing $\qmf$, we get 
$$\Cscr_1 \ll_{\pi_1,\pi_2,\F,\varepsilon} (\ell q)^\varepsilon \frac{\ell^{1/2}}{q}.$$
The additional terms appear only if the two representations are Eisenstein. In this case, for $\Phi_1(s)=\varphi_1(s)\varphi_2(s/2)^\qmf$ and $\Phi_2(s)=\varphi_1(\sbar/3)\varphi_2(\sbar/4)^\qmf$, these terms are bounded by 
\begin{equation}\label{Dege2}
\max_{|s|=\varepsilon}\left\{\left|\left\langle \Phi_1(s),\Trm_{\lmf}\left[\Ecal\left(\Phi_2(s)\right)\right]\right\rangle_{\mathrm{reg}}\right| \right\} \ll_{\pi_1,\pi_2,\F,\varepsilon}(\ell q)^\varepsilon \frac{\ell^{1/2}}{q},
\end{equation}
by Lemma \ref{LemmaDegenerateContribution}, the analysis made in Section \ref{Paragraph} (for the $\lmf$-aspect) and Proposition \ref{PropositionDegenerate} for the $\qmf$-aspect. Hence we conclude with
\begin{equation}\label{FirstRelation}
\Pscr_{\qmf}(\lmf,\Phi,\Phi)=\Gscr_\qmf(\lmf,\Phi,\Phi)+O_{\pi_1,\pi_2,\F,\varepsilon}\left( (\ell q)^\varepsilon \frac{\ell^{1/2}}{q}\right),
\end{equation}
where we recall that $\Phi$ is defined by \eqref{DefinitionPhi}.

\subsection{The symmetric relation} The symmetric relation is obtained by grouping differently the vectors $\varphi_i$ : in the period $\Pscr_{\qmf}(\lmf,\Phi,\Phi)$, we first use the relation \eqref{RelationHecke} to expand the Hecke operator $\Trm_\lmf$. Secondly we do the same, but on the reverse way, for the translation by the matrix $\left(\begin{smallmatrix} 1 & \\ & \varpi_\qmf\end{smallmatrix}\right)$, making this time the operator $\Trm_{\qmf}$ appears. We thus infer the following symmetric relation :
\begin{equation}\label{Symmetry1}
q^{1/2}\frac{\zeta_\qmf(1)}{\zeta_\qmf(2)}\Pscr_\qmf(\lmf,\Phi,\Phi)= \frac{1}{\ell^{1/2}}\sum_{0\leqslant k\leqslant \frac{n}{2}}\gamma_{n-2k}\Pscr_{\pmf^{n-2k}}(\qmf,\Psi_1,\Psi_2), 
\end{equation}
with
\begin{equation}\label{Psi}
\Psi_1= \overline{\varphi}_1\varphi_1^{\pmf^{n-2k}} \ \ \mathrm{and} \ \ \Psi_2=\varphi_2\overline{\varphi}_2^{\pmf^{n-2k}}.
\end{equation}

Periods $\Pscr_{\pmf^{n-2k}}(\qmf,\Psi_1,\Psi_2)$ admit a similar expansion as \eqref{Expansion1}, but this time over representations of conductor dividing $\pmf^{n-2k}$. We are now familiar with the spectral decomposition 
$$\Pscr_{\pmf^{n-2k}}(\qmf,\Psi_1,\Psi_2)=\Gscr_{\pmf^{n-2k}}(\qmf,\Psi_1,\Psi_2)+ \mathscr{C}_2(k) +\mathrm{additional \ terms},$$
where $\Gscr_{\pmf^{n-2k}}(\qmf,\Psi_1,\Psi_2)$ is the generic part, $\mathscr{C}_2$ is non-zero only when both $\pi_1$ and $\pi_2$ are cuspidal; of course in this case there are no additional terms. We have in the same spirit of the previous section (see \eqref{Dege2})
\begin{equation}\label{Addition1}
\mathrm{additional \ terms }\ll_{\pi_1,\pi_2,\F,\varepsilon} (\ell q)^\varepsilon \frac{q^{1/2}}{p^{n-2k}},
\end{equation}
and by definition
\begin{equation}\label{GenericExpansionP}
\begin{split}
  & \Gscr_{\pmf^{n-2k}}(\qmf,\Psi_1,\Psi_2)= \sum_{\substack{\pi \ \mathrm{cuspidal} \\ \crm(\pi)| \pmf^{n-2k}}}\lambda_\pi(\qmf)\sum_{\psi\in\Bscr(\pi,\pmf^{n-2k})} \langle \Psi_1,\psi\rangle\langle\psi, \Psi_2\rangle \\ 
 + & \ \sum_{\substack{\chi\in\widehat{\F^\times\setminus\Abb_\F^{1}} \\ \crm(\chi)^2|\pmf^{n-2k}}}\int_{-\infty}^\infty \lambda_{\chi,it}(\qmf)\sum_{\psi_{it}\in\Bscr(\chi,\chi^{-1},it,\pmf^{n-2k})}\langle\Psi_1,\Erm(\psi_{it})\rangle_\reg\langle\Erm(\psi_{it}),\Psi_2\rangle_\reg \frac{dt}{4\pi}.
\end{split}
\end{equation}
Assuming that $\pi_1$ and $\pi_2$ are cuspidal, there are no additional terms and the constant $\mathscr{C}_2(k)$ is equal to 
\begin{equation}\label{ConstantC2}
\mathscr{C}_2(k)=V_{\F}^{-1}q^{1/2}\frac{\zeta_\qmf(1)}{\zeta_\qmf(2)}\prod_{i=1}^2\left( \int_{\X}\varphi_i\overline{\varphi}_i^{\pmf^{n-2k}}\right),
\end{equation}
with by \eqref{BoundIntegral}
$$\left| \int_{\X}\varphi_i\overline{\varphi}_i^{\pmf^{n-2k}}\right|\leqslant \zeta_{\F_\pmf}(1)\frac{n-2k+1}{p^{\frac{n-2k}{2}}}||\varphi_i||_{L^2}^2 \Longrightarrow \Cscr_2(k)\ll_{\varepsilon,\pi_1,\pi_2,\F}(q\ell)^\varepsilon \frac{q^{1/2}}{p^{n-2k}}.$$
The total complete constant term is obtained after summing over $0\leqslant k\leqslant n/2$ as in \eqref{Symmetry1}, i.e. 
\begin{equation}\label{GlobalConstantTerm}
\Cscr_2(\pi_1,\pi_2):= \frac{1}{\ell^{1/2}}\sum_{0\leqslant k\leqslant \frac{n}{2}}\gamma_{n-2k}\Cscr_2(k)
\end{equation}
with the following upper bound
\begin{equation}\label{BoundConstantTerm}
\mathscr{C}_2(\pi_1,\pi_2) \ll_{\varepsilon,\pi_1,\pi_2,\F} (q\ell)^\varepsilon \frac{q^{1/2}}{\ell^{1/2}}.
\end{equation}
Assembling \eqref{FirstRelation},\eqref{Symmetry1},\eqref{Addition1} and \eqref{GlobalConstantTerm}, we find the following reciprocity relation between the two generic parts 
\begin{equation}\label{ReciprocityRelation}
\begin{split}
&q^{1/2}\frac{\zeta_\qmf(1)}{\zeta_\qmf(2)}\Gscr_{\qmf}(\lmf,\Phi,\Phi)=  \frac{1}{\ell ^{1/2}}\sum_{0\leqslant k\leqslant \frac{n}{2}}\gamma_{n-2k}\Gscr_{\pmf^{n-2k}}(\qmf,\Psi_1,\Psi_2) \\ + & \left\{ \begin{array}{llc} \Cscr_2(\pi_1,\pi_2)+O_{\pi_1,\pi_2,\F,\varepsilon}\left( (q\ell)^{\varepsilon}\left(\frac{\ell}{q} \right)^{1/2}\right) & \mathrm{if} & \pi_1,\pi_2 \ \mathrm{are \ cuspidal} \\ & & \\O_{\pi_1,\pi_2,\F,\varepsilon}\left((\ell q)^\varepsilon \left( \left(\frac{q}{\ell}\right)^{1/2} +\left(\frac{\ell}{q}\right)^{1/2} \right) \right) & \mathrm{otherwise}. \end{array}\right.
\end{split}
\end{equation}
\begin{remq}\label{RemarkNonTempered}
If $\pi_1$ and $\pi_2$ are cuspidal and $\pi_1$ is isomorphic to a quadratic twist of $\pi_2$, but not necessarily tempered at the finite places, then by \eqref{q-error}, the error term in the second line of \eqref{ReciprocityRelation} has to be replaced by $q^{2\vartheta_{\pi_1}}\left(\tfrac{\ell}{q}\right)^{1/2}$.
\end{remq}
Finally, we can estimate the generic terms on the righthand side simply using the bound $\lambda_\pi(\qmf)\leqslant \tau(\qmf)q^{\vartheta}$, Cauchy-Schwarz inequality and Proposition \ref{PropositionGeneric}, obtaining 
\begin{equation}\label{FinalBound1}
\frac{1}{\ell^{1/2}}\sum_{0\leqslant k\leqslant \frac{n}{2}}\gamma_{n-2k}\Gscr_{\pmf^{n-2k}}(\qmf,\Psi_1,\Psi_2) \ll_{\pi_1,\pi_2,\F,\varepsilon} (\ell q)^\varepsilon \ell^{1/2}q^{\vartheta}.
\end{equation}

\subsection{Connection with triple product}\label{Connection}
\noindent We connect in this section the expansion \eqref{GenericExpansionQ} with a moment of the triple product $L(\pi\otimes\pi_1\otimes\pi_2,\tfrac{1}{2})$ over representations $\pi$ of conductor dividing $\qmf$. For such a representation $\pi$, we define
\begin{equation}\label{Lscr}
\Lscr(\pi,\qmf) :=\sum_{\psi \in \Bscr(\pi,\qmf)}\left| \langle \varphi_1\varphi_2^\qmf,\psi\rangle_{\mathrm{reg}}\right|^2,
\end{equation}
where we recall that $\Bscr(\pi,\qmf)$ is an orthonormal basis of the space of $\Krm_0(\qmf)$-vectors in $\pi$. By Proposition \ref{PropositionIntegralRepresentation} and Definition \eqref{CanonicalNorm} of the canonical norm, we have
\begin{equation}\label{FactorizationLcal}
\Lscr(\pi,\qmf)=\frac{C}{2\Delta_\F^{1/2}}f(\pi_\infty)\frac{L(\pi\otimes\pi_1\otimes\pi_2,\tfrac{1}{2})}{\Lambda^*(\pi,\mathrm{Ad},1)} \ell(\pi,\qmf),
\end{equation}
where 
\begin{equation}\label{ValueC}
C= \left\{ \begin{array}{lll}  2\Lambda_\F(2) & \ifm & \pi,\pi_1,\pi_2 \ \mathrm{are \ cuspidal} \\
 & & \\
 
2\Lambda_\F^*(1) & \ifm & \pi \ \mathrm{is \ Eisenstein \ and \ nonsingular} \\
 & & \\
1 & \mathrm{otherwise}.
\end{array} \right.
\end{equation}
If we identify $\pi\simeq \otimes_v\pi_v$, then $\ell(\pi,\qmf)=\prod_{v|\qmf}\ell_v$ and the local factors $\ell_v$ are given, for $v|\qmf$, by 
\cite[(3.9) \& Section 4]{raphael}
\begin{equation}\label{ellv}
\ell_v = \left\{ \begin{array}{lcl}
q_v^{-1}\frac{\zeta_{\F_v}(2)}{\zeta_{\F_v}(1)} & \ifm & \crm(\pi_v) = 1 \\
& & \\
\kappa_v & \ifm & \crm(\pi_v) =0,
\end{array}\right.
\end{equation}
where $\kappa_v$ depends on the local datas $\pi_v,\pi_{1,v}$ and $\pi_{2,v}$ and can be computed explicitly using \cite[Section 4.2]{raphael}. We obtain 
\begin{equation}\label{kappav}
0<\kappa_v \ll q_v^{-1+2\max(\vartheta_{\pi_1},\vartheta_{\pi_2})}.
\end{equation}

\subsection{Interlude on the Archimedean function $f(\pi_\infty)$}\label{SectionInterlude}
The Archimedean function $f(\pi_\infty)$ appearing in the factorization \eqref{FactorizationLcal} is given by (c.f. \cite[eq. (3.10)]{raphael}) 
\begin{equation}\label{Definitionfinfty}
f(\pi_\infty):= \sum_{\varphi_\infty \in \Bscr(\pi_\infty)}I_\infty(\varphi_\infty\otimes\varphi_{1,\infty}\otimes\varphi_{2,\infty})L(\pi_\infty\otimes\pi_{1,\infty}\otimes\pi_{2,\infty},\tfrac{1}{2}),
\end{equation}
where the local period $I_\infty$ is defined in \eqref{DefinitionNormalizedMatrixCoefficient}. The function $f(\pi_\infty)$ is non-negative and depends on the infinite factors $\pi_{1,\infty}$ and $\pi_{2,\infty}$ and more precisely, on the choice of test vectors $\varphi_{i,\infty}\in \pi_{i,\infty}$ and the orthonormal basis $\Bscr(\pi_\infty)$. For our application, it is fundamental that $f$ satisfies at least the following property : given $\pi=\pi_\infty\otimes\pi_{\mathrm{fin}}$ a unitary automorphic representation of $\PGL_2(\Abb_\F)$, there exists $\varphi_{i,\infty}\in\pi_{i,\infty}$, $i=1,2$ having norm $1$ and a basis $\Bscr(\pi_\infty)$ such that $f(\pi_\infty)$ is bounded below by a power of the Archimedean conductor $\crm(\pi_\infty).$ It is a result of Michel and Venkatesh \cite[Proposition 3.6.1]{subconvexity} that such a choice exists when the $\pi_i$'s satisfy the following hypothesis :
\begin{Hyp}\label{Hyp}
Assume that for all Archimedean place $v|\infty$, either $\pi_{1,v}$ or $\pi_{2,v}$ is a principal series representation. Then for any $\varepsilon > 0$, there exists a positive constant $C(\pi_{1,\infty},\pi_{2,\infty},\varepsilon)$ such that we have the lower bound
\begin{equation}\label{LowerBound}
f(\pi_\infty)\geqslant \frac{C(\pi_{1,\infty},\pi_{2,\infty},\varepsilon)}{\crm(\pi_\infty)^{1+\varepsilon}}.
\end{equation}
\end{Hyp}

\begin{remq} Observe that if one of the $\pi_i$'s is Eisenstein, then $\pi_{1,v}$ is a principal series for every $v|\infty$, so the condition mentionned above is satisfied. Futheremore, $\varphi_{1,v}$ is constructed by choosing suitable Schwartz functions $\Psi_{i,v}$ in \eqref{DefinitionFlat} (see \cite[Definition 3.6.4]{subconvexity}).
\end{remq}

\begin{remq}\label{RemarkInfty} Fixing $\pi=\pi_\infty\otimes\pi_{\mathrm{fin}}$ and assuming that $\pi_1,\pi_2$ satisfy Hypothesis \ref{Hyp}, then all implied constants involved in Sections \ref{SectionEstimation} and \ref{SectionSymmetric} depend no more on $\varphi_{i,\infty}$, but only on $\pi_{\infty}$ (of course also on $\pi_1,\pi_2$). It should be possible with more works to be more explicit about this dependency (e.g. polynomially on the datas of $\pi_\infty,\pi_{i,\infty}$), but we leave that aspect aside in this work, as well as an explicit description of the function $f(\pi_\infty)$ for varying vectors $\varphi_{i,\infty}$.
\end{remq}

\subsection{Proof of Theorem \ref{Theorem1}} We define the cuspidal part by
\begin{equation}\label{CuspidalPart}
\mathscr{C}(\pi_1,\pi_2,\qmf,\lmf):= C\sum_{\substack{\pi \ \mathrm{cuspidal} \\ \crm(\pi)|\qmf}}\lambda_\pi(\lmf)\frac{L(\pi\otimes\pi_1\otimes\pi_2,\tfrac{1}{2})}{\Lambda(\pi,\mathrm{Ad},1)}f(\pi_\infty)\Lcal(\pi,\qmf),
\end{equation}
where $\Lcal(\pi,\qmf)$ is the normalized version of $\ell(\pi,\qmf)$, i.e.
\begin{equation}\label{Lcal}
\Lcal(\pi,\qmf)=q\frac{\zeta_\qmf(1)}{\zeta_\qmf(2)}\ell(\pi,\qmf).
\end{equation}
For the continuous part, we denote by $\pi_{\omega}(it)$ the principal series $\omega|\cdot|^{it}\boxplus \omegabar|\cdot|^{-it}.$ We then set 
\begin{equation}\label{ContinuousPart}
\begin{split}
\mathscr{E}(\pi_1,\pi_2,\qmf,\lmf):= C\sum_{\substack{\omega\in\widehat{\F^\times\setminus \Abb_\F^{1}} \\ \crm(\omega)=\Ocal_\F}}&\int_{-\infty}^\infty \lambda_{\pi_\omega(it)}(\lmf)f(\pi_{\omega_\infty}(it))\Lcal(\pi_\omega(it),\qmf) \\ \times & \frac{L(\pi_1\otimes\pi_2\otimes\omega,\tfrac{1}{2}+it)L(\pi_1\otimes\pi_2\otimes\omegabar,\tfrac{1}{2}-it)}{\Lambda^*(\pi_\omega(it),\mathrm{Ad},1)} \frac{dt}{4\pi},
\end{split}
\end{equation}
and
\begin{equation}\label{DefinitionMoment}
\Mscr(\pi_1,\pi_2,\qmf,\lmf) :=\mathscr{C}(\pi_1,\pi_2,\qmf,\lmf)+ \mathscr{E}(\pi_1,\pi_2,\qmf,\lmf).
\end{equation}
Observing now that 
\begin{equation}\label{Scale}
\Mscr(\pi_1,\pi_2,\qmf,\lmf)=q\frac{\zeta_\qmf (1)}{\zeta_\qmf (2)}\mathscr{G}_\qmf(\lmf,\Phi,\Phi),
\end{equation}
where $\mathscr{G}_\qmf(\lmf,\Phi,\Phi)$ is the generic expansion \eqref{GenericExpansionQ}, we obtain, by \eqref{BoundConstantTerm},\eqref{ReciprocityRelation} and \eqref{FinalBound1} the conclusion of Theorem \ref{Theorem1}.

\subsection{Proof of Theorems \ref{Theorem3} and \ref{Theorem4}}\label{SectionProof34} We assume in this section that we are in the case where $\pi_1=\pi_2=:\sigma$ and $\varphi_1=\varphi_2=:\varphi$. Then it is possible under this assumption to connect the generic expansion \eqref{GenericExpansionP} to a moment of automorphic $L$-functions. Indeed, assuming that $\lmf$ (of norm $\ell$) is squarefree, i.e. $n=1$ in the above treatment, and given any $\pi$ unitary of conductor dividing $\lmf$, the analogue of \eqref{Lscr} is
$$\widetilde{\mathscr{L}}(\pi,\lmf):= \sum_{\psi\in\Bscr(\pi,\lmf)}\langle \overline{\varphi}\varphi^{\lmf},\psi\rangle\langle \psi,\varphi\overline{\varphi}^{\lmf}\rangle,$$
where $\psi$ could be possibly an Eisenstein series and the inner product as to be understood in this case as the regularized version. The terms inside the sum are almost square of periods; we see that the only obstruction is the complex conjugation which is not at the same place. To remedy this situation, we use exactly the same idea of \cite[Proposition 3.2]{raphael}, obtaining a similar expression than \eqref{FactorizationLcal}, but with an extra local root number at $\lmf$
\begin{equation}\label{Facto2}
\widetilde{\mathscr{L}}(\pi,\pmf)=\frac{C}{2\Delta_\F^{1/2}}\widetilde{f}(\pi_\infty)\varepsilon_{\pi}(\lmf)\frac{L(\pi\otimes\sigma\otimes\sigma,\tfrac{1}{2})}{\Lambda^*(\pi,\mathrm{Ad},1)}\widetilde{\ell}(\pi,\lmf),
\end{equation}
where $\widetilde{f}(\pi_\infty)$ is defined as $f(\pi_\infty)$ in \eqref{Definitionfinfty} with $\varphi_{1,\infty}=\varphi_{\infty}$ and $\varphi_{2,\infty}=\overline{\varphi}_{\infty}$ and $\widetilde{\ell}(\pi,\lmf)$ is a product over the places $v|\lmf$ of local factors $\widetilde{\ell}_v(\pi_v,\lmf_v)$ with $\widetilde{\ell}_v$ as in \eqref{ellv} if $\crm(\pi_v)=1$, or equal to $\widetilde{\kappa}_v$ if $\crm(\pi_v)=0$ with $\widetilde{\kappa}_v$ satisfying the same property \eqref{kappav}.

\vspace{0.1cm}

Observe now that we have the factorization of the $L$-function \cite[eq. (1.5)]{nelson}

\begin{equation}\label{Sym}
L(\pi\otimes\sigma\otimes\sigma,\tfrac{1}{2})= \left\{ \begin{array}{lcl} L(\pi,\tfrac{1}{2})^4 & \mathrm{if} & \sigma=1\boxplus 1 \\ 
 & & \\ 
 L(\mathrm{Sym}^2(\sigma)\otimes \pi,\tfrac{1}{2})L(\pi,\tfrac{1}{2}) & \mathrm{if} & \sigma  \ \mathrm{cuspidal}.
 \end{array}\right.
\end{equation}
Moreover, if $\lmf$ is squarefree, we can easily compute the constant $\Cscr_2$ \eqref{ConstantC2}-\eqref{GlobalConstantTerm} in the case where $\sigma$ is cuspidal :
\begin{equation}\label{ValueC2}
\Cscr_2 = \frac{q^{1/2}\zeta_\qmf (1)}{V_\F \zeta_{\qmf}(2)}||\varphi||_{L^2}^4\times \left\{ \begin{array}{lcc} \lambda_\sigma(\lmf)^2\frac{\zeta_{\lmf}(2)}{p^{1/2}\zeta_{\lmf}(1)} & \mathrm{if} & \lmf \neq 1 \\ 
  & & \\
  1 & \mathrm{if} & \lmf=1, \end{array}\right.
\end{equation}
and by \eqref{Comparition}
$$||\varphi||_{L^2}^4 = 4\Delta_\F \Lambda(\sigma,\mathrm{Ad},1)^2.$$
\begin{proof}[proof of Theorem \ref{Theorem3}] The proof is similar to Theorem \ref{Theorem1}. We first let $\widetilde{\mathscr{C}}(\sigma,\lmf,\qmf)$, $\widetilde{\mathscr{E}}(\sigma,\lmf,\qmf)$ be respectively as in \eqref{CuspidalPart} and \eqref{ContinuousPart}, but with the following modifications :
\begin{enumerate}
\item[$\bullet$] The role of $\qmf$ and $\lmf$ are exchanged;
\item[$\bullet$] The Archimedean test function $f(\pi_\infty)$ is replaced by $\widetilde{f}(\pi_\infty)$;
\item[$\bullet$] The non-Archimedean weight function $\Lcal$ is replaced by $\widetilde{\Lcal}$ (at the places $v|\lmf$), with $\widetilde{\Lcal}$ is the normalized version (as in \eqref{Lcal}) of $\widetilde{\ell}$ defined in the previous paragraph.
\item[$\bullet$] We attach a local root number $\varepsilon_\pi(\lmf)$ to each $L$-function appearing in the spectral expansion. Note that $\varepsilon_\pi(\lmf)=1$ if $\pi$ is unramified.
\end{enumerate}

Setting 
\begin{equation}
\widetilde{\Mscr}(\sigma,\lmf,\qmf):= \widetilde{\mathscr{C}}(\sigma,\lmf,\qmf) + \widetilde{\mathscr{E}}(\sigma,\lmf,\qmf),
\end{equation}
we have as in \eqref{Scale}
$$\widetilde{\Mscr}(\sigma,\lmf,\qmf) = \ell\frac{\zeta_\lmf(1)}{\zeta_\lmf(2)}\mathscr{G}_{\lmf}(\qmf,\Psi_1,\overline{\Psi}_1),$$
where $\mathscr{G}_{\lmf}(\qmf,\Psi_1,\overline{\Psi}_1)$ is the generic expansion defined in \eqref{GenericExpansionP}. The conclusion follows now from the symmetric relation \eqref{ReciprocityRelation} (see also Remark \ref{RemarkNonTempered}) and the computation of the main term \eqref{ValueC2} in the cuspidal case.
\end{proof}
\begin{proof}[proof of Theorem \ref{Theorem4}] Assume that $\qmf\in\mathrm{Spec}(\Ocal_\F)$. Applying Theorem \ref{Theorem3} with $\lmf=1$, so $\ell=1$, we obtain (see the value of the main term \eqref{ValueC2} in this case)
$$\frac{1}{q}\Mscr(\sigma,\qmf,1)= \frac{1}{q^{1/2}}\widetilde{\Mscr}(\sigma,1,\qmf)+\frac{4\Delta_\F\Lambda(\sigma,\mathrm{Ad},1)^2}{V_\F}\cdot \frac{\zeta_\qmf(1)}{\zeta_\qmf(2)}+O_{\sigma,\F,\varepsilon}\left(q^{-1+2\vartheta_\sigma+\varepsilon} \right).$$
For the special value of $\lmf=1$, we have
$$|\widetilde{\Mscr}(\sigma,1,\qmf)|=|\mathscr{G}_1(\qmf,\Psi_1,\Psi_1)|\leqslant q^{\vartheta}\mathscr{G}(\Psi)=q^{\vartheta}\left( ||\varphi||_{L^4}^4-||\varphi||_{L^2}^4 \right),$$
where $\Gscr(\Psi)$ is defined in \eqref{SpectralExpPhi_i} with $\Psi=|\varphi|^2$ and the last equality comes from Remark \ref{Dependance}.

Now recalling definitions \eqref{DefinitionMoment}, \eqref{ValueC} and the fact that $\Lcal(\pi,\qmf)=1$ when $\pi$ has conductor $\qmf$ (c.f. \eqref{ellv} and \eqref{Lcal}) , we have
$$\Mscr(\sigma,\qmf,1)=2\Lambda_\F(2)\sum_{\substack{\pi \ \mathrm{cuspidal} \\ \crm(\pi)=\qmf}}\frac{L(\pi\otimes\sigma\otimes\sigma,\tfrac{1}{2})}{\Lambda(\pi,\mathrm{Ad},1)}f(\pi_\infty)+\mathcal{U}(\sigma),$$
where $\mathcal{U}(\sigma)$ contains the Eisenstein expansion and the unramified representations appearing in the cuspidal part $\mathscr{C}(\sigma,\qmf,1)$. By $\eqref{kappav}$, we obtain
$$\mathcal{U}(\sigma)\ll_{\sigma,\F}q^{2\vartheta_\sigma}.$$
To complete the proof of Theorem \ref{Theorem4}, we just note that since $\qmf$ is prime, we have $\tfrac{\zeta_\qmf(1)}{\zeta_\qmf(2)}=1+q^{-1}.$
\end{proof}

\section{Proof of Theorem \ref{Theorem2} and Corollary \ref{Corollary5}}\label{SectionSub}
Let $\qmf$ be a squarefree ideal of $\Ocal_\F$ and fix $\pi_0$ a cuspidal automorphic representation of $\PGL_2(\Abb_\F)$ with finite conductor $\qmf$. Let $\pi_1,\pi_2$ two unitary automorphic representations satisfying Hypothesis \ref{Hyp1}. We fix the test vectors $\varphi_{i}\in\pi_i$ as in the beginning of Section \ref{SectionEstimation}, i.e. we allow the infinite components $\varphi_{i,\infty}$ to have a certain degree of freedom (see Remark \ref{RemarkInfinite}).

\subsection{The amplification method}
Let $q^{1/100}<L<q$ be a parameter that we will choose at the end. Given $\pi$ a unitary automorphic representation of conductor dividing $\qmf$, we choose as in \cite[Section 12]{blomerspectral} the following amplifier
$$\Acal(\pi):= \left(\sum_{\substack{\pmf \in \mathrm{Spec}(\Ocal_\F) \\ \Nscr(\pmf)\leqslant L \\ \pmf \nmid \qmf }}\lambda_\pi(\pmf)\bm{x}(\pmf)\right)^2+\left(\sum_{\substack{\pmf \in \mathrm{Spec}(\Ocal_\F) \\ \Nscr(\pmf)\leqslant L \\ \pmf \nmid \qmf }}\lambda_\pi(\pmf^2)\bm{x}(\pmf^2)\right)^2,$$
where $\bm{x}(\lmf)=\mathrm{sgn}(\lambda_\pi(\lmf))$.
Observe that
\begin{equation}\label{LowerBound2}
\Acal(\pi_0)\geqslant \frac{1}{2}\left(\sum_{\substack{\pmf \in \mathrm{Spec}(\Ocal_\F) \\ \Nscr(\pmf)\leqslant L \\ \pmf \nmid \qmf }}|\lambda_{\pi_0}(\pmf)|+|\lambda_\pi(\pmf^2)|\right)^2\gg_\F \frac{L^2}{(\log L)^2},
\end{equation}
by Landau Prime Ideal Theorem and the Hecke relation $\lambda_{\pi_0}(\pmf)^2=1+\lambda_{\pi_0}(\pmf^2).$ On the other hand, we have
\begin{equation}\label{DecompositionAmplifier}
\begin{split}
\Acal(\pi)=& \ \sum_{\substack{\pmf \in \mathrm{Spec}(\Ocal_\F) \\ \Nscr(\pmf)\leqslant L \\ \pmf \nmid \qmf }}(\bm{x}(\pmf)^2+\bm{x}(\pmf^2)^2)+\sum_{\substack{\pmf_1,\pmf_2  \\ \Nscr(\pmf_i)\leqslant L \\ \pmf_i \nmid \qmf }} \bm{x}(\pmf_1^2)\bm{x}(\pmf_2^2)\lambda_\pi(\pmf_1^2\pmf_2^2) \\ + & \ \sum_{\substack{\pmf_1,\pmf_2  \\ \Nscr(\pmf_i)\leqslant L \\ \pmf_i \nmid \qmf }} (\bm{x}(\pmf_1)\bm{x}(\pmf_2)+\delta_{\pmf_1=\pmf_2}\bm{x}(\pmf_1^2)\bm{x}(\pmf_2^2))\lambda_{\pi}(\pmf_1\pmf_2).
\end{split}
\end{equation}
Let $C$, $f(\pi_{0,\infty})$ be the quantity defined respectively in \eqref{ValueC} and \eqref{Definitionfinfty} and recall that $\Lcal(\pi_0,\qmf)=1$ for such a $\pi_0$ (c.f. \eqref{ellv} and \eqref{Lcal}). By positivity, we have
$$C\Acal(\pi_0)\frac{L(\pi_0\otimes\pi_1\otimes\pi_2,\tfrac{1}{2})}{\Lambda(\pi_0,\mathrm{Ad},1)}f(\pi_{0,\infty})\leqslant \Mscr_\Acal(\pi_1,\pi_2,\qmf),$$
with $\Mscr_\Acal(\pi_1,\pi_2,\qmf)$ as in \eqref{DefinitionMoment}, but with the amplifier $\Acal(\pi)$ instead of the Hecke eigenvalues in \eqref{CuspidalPart} and \eqref{ContinuousPart}. Using the lower bound \eqref{LowerBound2}, we get 
$$\frac{L\left( \pi_0\otimes\pi_1\otimes\pi_2,\tfrac{1}{2}\right)}{\Lambda(\pi_0,\mathrm{Ad},1)}f(\pi_{0,\infty}) \ll_{\varepsilon,\F} L^{-2+\varepsilon}\Mscr_\Acal(\pi_1,\pi_2,\qmf).$$
Expanding the amplifier as in \eqref{DecompositionAmplifier} and applying Theorem \ref{Theorem1} with $\lmf=1,\pmf_1\pmf_2$ or $\lmf=\pmf_1^2\pmf_2^2$ yields, provided $L<q^{1/4}$,
$$
\frac{L\left( \pi_0\otimes\pi_1\otimes\pi_2,\tfrac{1}{2}\right)}{\Lambda(\pi_0,\mathrm{Ad},1)}f(\pi_{0,\infty})  \ll_{\varepsilon,\F,\pi_1,\pi_2,\varphi_{1,\infty},\varphi_{2,\infty}} q^\varepsilon\left( qL^{-1}+q^{\frac{1}{2}+\vartheta}L^2\right),
$$
where we emphasize the dependance in the Archimedean components $\varphi_{i,\infty}$ in the above estimation. Finally, choosing $L=q^{\frac{1-2\vartheta}{6}}$ and we infer the final bound
\begin{equation}\label{HybridBound}
\frac{L\left( \pi_0\otimes\pi_1\otimes\pi_2,\tfrac{1}{2}\right)}{\Lambda(\pi_0,\mathrm{Ad},1)}f(\pi_{0,\infty})  \ll_{\varepsilon,\F,\pi_1,\pi_2,\varphi_{1,\infty},\varphi_{2,\infty}} q^{1-\frac{1-2\vartheta}{6}+\varepsilon}.
\end{equation}
\begin{proof}[proof of Corollary \ref{Corollary5}] Assume that $\pi_1=\pi_2=:\sigma$ is cuspidal and set $\varphi_1=\varphi_2=: \varphi$. For each $v|\infty$, we choose $\varphi_v$ to be the vector of minimal weight in $\sigma_v$. By \eqref{HybridBound}, for any $\varepsilon>0$, there exists a positive constant $C(\sigma,\F,\varepsilon)$ such that for any cuspidal representation $\pi$ of conductor $\qmf$ ($\qmf$ is prime here), we have 
\begin{equation}\label{Hybrid1}
\frac{L\left( \pi\otimes\sigma\otimes\sigma,\tfrac{1}{2}\right)}{\Lambda(\pi,\mathrm{Ad},1)}f(\pi_{\infty}) \leqslant C(\sigma,\F,\varepsilon)q^{1-\frac{1-2\vartheta}{6}}.
\end{equation}
By Theorem \ref{Theorem4}, we have for $q$ large enough in terms of $\sigma,\F,\epsilon$,
$$
\frac{1}{q}\sum_{\substack{\pi \ \mathrm{cuspidal} \\ \crm(\pi)=\qmf}}\frac{L\left( \pi\otimes\sigma\otimes\sigma,\tfrac{1}{2}\right)}{\Lambda(\pi,\mathrm{Ad},1)}f(\pi_{\infty}) \geqslant \frac{\Delta_\F \Lambda(\sigma,\mathrm{Ad},1)^2}{\Lambda_\F(2)V_\F}.
$$
Hence the result follows from the bound \eqref{Hybrid1}, the factorization \eqref{Sym} and the choice of the constant
$$\lambda(\sigma,\F,\varepsilon):= \frac{\Delta_\F \Lambda(\sigma,\mathrm{Ad},1)^2}{\Lambda_\F(2)V_\F C(\sigma,\F,\varepsilon)}.$$
\end{proof}

\begin{proof}[proof of Theorem \ref{Theorem2}] Let $\pi$ be cuspidal with conductor $\qmf$ and assume here that $\pi_1$ and $\pi_2$ satisfy Hypothesis \ref{Hyp}. In consequences, we can choose test vectors $\varphi_{i,\infty}\in\pi_{i,\infty}$, depending on the Archimedean data $\pi_\infty$ such that \eqref{LowerBound} is satisfied. Observe that in this case the dependence in $\varphi_{i,\infty}$ in \eqref{HybridBound} is replaced by a dependence in $\pi_\infty$ (see Remark \ref{RemarkInfty}). Finally, using \eqref{BoundAdjoint} for the adjoint $L$-function at $s=1$, \eqref{HybridBound} transforms into
$$L\left( \pi\otimes\pi_1\otimes\pi_2,\tfrac{1}{2}\right) \ll_{\varepsilon,\F,\pi_1,\pi_2,\pi_\infty}q^{1-\frac{1-2\vartheta}{6}+\varepsilon},$$
which gives the desired subconvexity bound in Theorem \ref{Theorem2}.
\end{proof}

\section{A Local Computation}\label{SectionLocal}
The goal of this section is to compute explicitly the local factor \eqref{LocalPeriod} appearing in Proposition \ref{PropositionDegenerate}. We thus fix $k$ a non-Archimedean local field of characteristic zero with ring of integers $\Ocal$, maximal ideal $\mmf$, uniformizer $\varpi$ (i.e. a generator of the maximal ideal $\mmf$) and with residue field of size $q$. 

Let $\pi_1,\pi_2$ be generic irreducible admissible unitarizable representations of $\PGL_2(k)$ and write $\Krm=\GL_2(\Ocal)$. We assume that the representations $\pi_1,\pi_2$ are unramified and tempered. On each $\pi_i$, we fix an inner product $\langle \cdot,\cdot\rangle_i$ together with an equivariant isometrical map $\pi_i\rightarrow \Wcal(\pi_i)$ and the Whittaker models are equipped with the inner product \eqref{NormalizedInnerProduct}. 
We choose $\Krm$-invariant vectors $\varphi_i\in\pi_i$ with norm one with respect $\langle \cdot,\cdot\rangle_i$ and with associated Whittaker functions $\Wrm_i$ ($\Wrm_i(1)=1$ with this normalization). 

We let $f\in 1\boxplus 1$ be the section appearing as local constituant of the global section defined in \eqref{DefinitionFlat}. For small $\varepsilon>0$ and for $s,t\in\Cbb$ with $0\leqslant |s|+|t|\leqslant \varepsilon$ and $n\geqslant 1$, we want to evaluate the following integral
\begin{equation}\label{DefinitionZetaIntegral}
\Zcal(\pi_1,\pi_2,n;t,s) :=\int_{\Nrm(k)\setminus\PGL_2(k)}\Wrm_1\Wrm_2^{\mmf^n}f(t)^{\mmf^n}f(s)dg.
\end{equation}
To simplify notations, we will only treat the case $t=s=0$ and leave the general case to the reader; this only affects the final bound by a factor $q^{n\varepsilon}$. We thus write $\Zcal(\pi_1,\pi_2,n;0,0)=\Zcal(\pi_1,\pi_2,n)$. 

\vspace{0.1cm}

Using Iwasawa decomposition, the measure \eqref{HaarMeasure} and the definition of the section $f$ leads to 
$$\frac{\Zcal(\pi_1,\pi_2,n)}{\zeta_k(1)}= \int_{\Krm}\Fcal(k,n)dk,$$
with 
\begin{equation}\label{DefinitionFcal}
\Fcal(k,n)=f^{\mmf^n}(k)\int_{k^\times}\Wrm_1(a(y)k)\Wrm_2^{\mmf^n}(a(y)k)d^\times y.
\end{equation}
Oberve that the function $k\mapsto \Fcal(k,n)$ is left invariant by the subgroups 
\begin{equation}\label{GroupsInvariance}
\Nrm(k)\cap \Krm, \ \ \mathbf{A}(k)\cap \Krm, \ \ \Zrm(k)\cap \Krm.
\end{equation}
Moreover, since $\Fcal(k,n)$ is also right invariant under $\Krm_0(\varpi^n)$, we may decompose the $\Krm$-integral as follows :
\begin{equation}\label{DecompositionK-integral}
\int_{\Krm}\Fcal(k,n)dk = |\Krm_0(\varpi^n)|\Fcal(1,n)+\sum_{i=1}^n \int_{\Krm_0(\varpi^{i-1})\setminus \Krm_0(\varpi^i)}\Fcal(k,n)dk.
\end{equation}
We now have the following lemma which is a generalization of \cite[Lemma 11.6]{sparse} :
\begin{lemme}\label{LemmaFcal} For any $i=1,...,n$ and $k\in \Krm_0(\varpi^{i-1})\setminus \Krm_0(\varpi^i)$, we have
$$\Fcal(k,n)=\left\{ \begin{array}{lcl}
\Fcal\left(\left(\begin{matrix}
 & -1 \\ 1 &
\end{matrix} \right),n\right) & \ifm & i=1 \\ 
 & & \\
 \Fcal\left(\left(\begin{matrix}
 1&  \\ \varpi^{i-1} & 1
\end{matrix} \right),n\right) & \ifm & 1<i\leqslant n.
\end{array}\right.$$
\end{lemme}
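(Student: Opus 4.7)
The plan rests on the following simple observation: thanks to the three left invariances \eqref{GroupsInvariance} together with the obvious right $\Krm_0(\varpi^n)$-invariance of both $f^{\mmf^n}$ and $\Wrm_2^{\mmf^n}$, the function $\Fcal(\cdot,n)$ is left invariant under the full Iwahori-Borel $\Brm(\Ocal):=\Brm(k)\cap\Krm$ and right invariant under $\Krm_0(\varpi^n)$. The generation of $\Brm(\Ocal)$ by the three subgroups in \eqref{GroupsInvariance} is elementary: any $\left(\begin{smallmatrix}\alpha & \beta \\ 0 & \delta\end{smallmatrix}\right)\in\Brm(\Ocal)$ factors as $n(\beta\delta^{-1})\,a(\alpha\delta^{-1})\,(\delta I)$. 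Consequently it will suffice, for each $k\in\Krm_0(\varpi^{i-1})\setminus\Krm_0(\varpi^i)$, to exhibit a decomposition $k=b\cdot k_0\cdot k_1$ with $b\in\Brm(\Ocal)$, $k_1\in\Krm_0(\varpi^n)$ and $k_0$ the reference matrix of the lemma.

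I would split the argument according to $i$. Writing $k=\left(\begin{smallmatrix}a & b_0 \\ c & d\end{smallmatrix}\right)$, the hypothesis gives $v(c)=i-1$ and $\det k\in\Ocal^\times$. In the case $i=1$ (so $c\in\Ocal^\times$), a chain of elementary operations --- left multiplication by $n(-ac^{-1})\in\Nrm(\Ocal)$ to kill the $(1,1)$ entry, a diagonal element of $\Brm(\Ocal)$ to normalize the two nonzero off-diagonal entries to $1$, and finally a right translation by $n(-c^{-1}d)\in\Krm_0(\varpi^n)$ to clean up the $(2,2)$ entry --- produces the Weyl element $\left(\begin{smallmatrix}0 & 1 \\ 1 & 0\end{smallmatrix}\right)=a(-1)\cdot\left(\begin{smallmatrix}0 & -1 \\ 1 & 0\end{smallmatrix}\right)$, and absorbing $a(-1)\in\Abf(\Ocal^\times)$ into the left Borel factor yields the claim.

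In the case $i>1$ (so $c\in\mmf$, which forces $a,d\in\Ocal^\times$), an analogous Iwasawa-type reduction --- scale both diagonal entries to $1$, kill the $(1,2)$ entry by a left $\Nrm(\Ocal)$-translation (the new $(1,1)$ entry is then automatically in $1+\mmf\subset\Ocal^\times$), and re-normalize it to $1$ --- brings $k$, modulo left $\Brm(\Ocal)$, to the form $\left(\begin{smallmatrix}1 & 0 \\ u\varpi^{i-1} & 1\end{smallmatrix}\right)$ for some $u\in\Ocal^\times$. The decisive input is then the identity
$$
\begin{pmatrix}1 & 0 \\ u\varpi^{i-1} & 1\end{pmatrix}
=\begin{pmatrix}1 & 0 \\ 0 & u\end{pmatrix}
\begin{pmatrix}1 & 0 \\ \varpi^{i-1} & 1\end{pmatrix}
\begin{pmatrix}1 & 0 \\ 0 & u^{-1}\end{pmatrix},
$$
whose rightmost factor lies in $\Krm_0(\varpi^n)$ for \emph{every} $n$ because its $(2,1)$ entry vanishes.

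The main obstacle is precisely this $i>1$ step. A naive attempt to absorb the unit $u$ by a single right translation in $\Krm_0(\varpi^n)$ --- say by conjugating $\varpi^{i-1}$ into $u\varpi^{i-1}$ directly from the right --- produces a correction of valuation only $i-1<n$ in the $(2,1)$ slot, and therefore sits in $\Krm_0(\varpi^n)$ only when $u\equiv 1\pmod{\mmf^{n-i+1}}$, a condition not available in general. The factorization displayed above sidesteps this by splitting the correction between a diagonal torus element on the left (absorbed into $\Brm(\Ocal)$) and a diagonal right factor belonging to $\Krm_0(\varpi^n)$ unconditionally, independent of $n$.
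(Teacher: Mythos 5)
Your proposal is correct and follows essentially the same route as the paper: reduce $k$ to the stated representative via elementary manipulations exploiting left $\Brm(\Ocal)$-invariance and right $\Krm_0(\varpi^n)$-invariance of $\Fcal(\cdot,n)$. The only cosmetic difference is that the paper normalizes the lower-left entry to exactly $\varpi^{i-1}$ by an initial central scaling $zI$, whereas you absorb the unit $u$ in $c=u\varpi^{i-1}$ through the torus conjugation identity displayed in your proof; both are valid and the remaining arithmetic matches.
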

\begin{proof}
The strategy is to use the invariance properties of $\Fcal(k,n)$ under the subgroups \eqref{GroupsInvariance}. Up to multiplying by an element of $\Zrm(k)\cap \Krm$, we can assume that our matrix is of the form $k=\left(\begin{smallmatrix} a & b \\ \varpi^{i-1} & d\end{smallmatrix}\right)$. Multiplying on the left by $n(-a)$, we get $$n(-a)k=\left(\begin{matrix} a(1-\varpi^{i-1}) & b-ad \\ \varpi^{i-1} & d \end{matrix}\right)=: k_1.$$
We now need to distinguish the case $i=1$ from $i>1$. In the case $i=1$, we have $b-ad \in \Ocal^\times$, so multiplying $k_1$ on the left by $a((b-ad)^{-1})$, we obtain $k_2= \left(\begin{smallmatrix}  & 1 \\ 1 & d \end{smallmatrix}\right).$ We conclude by $w=k_2\left(\begin{smallmatrix}1 & d \\  & -1 \end{smallmatrix}\right)$. 

We use the same strategy if $i>1$. Here $a(1-\varpi^{-1})$ is a unit, so $a(a(1-\varpi^{i-1})^{-1})k_1 = \left(\begin{smallmatrix} 1 & \alpha \\ \varpi^{i-1} & d\end{smallmatrix} \right)=: k_2$ with $\alpha \in \Ocal.$ We continue by setting $$k_3=k_2\left(\begin{matrix} 1 & \alpha \\ & -1 \end{matrix}\right)=\left( \begin{matrix} 1 & \\ \varpi^{i-1} & \alpha\varpi^{i-1}-d\end{matrix}\right).$$
Observe that necessarily $a,d\in\Ocal^\times$, so that $\alpha\varpi^{i-1}-d\in\Ocal^\times$ and we conclude by multiplying on the right by the diagonal matrix having $1$ and the inverse of $\alpha\varpi^{i-1}-d$ in the low entry.
\end{proof}
Write $k_0(\varpi^i)=\left( \begin{smallmatrix} 1 & \\ \varpi^i & 1\end{smallmatrix}\right)$. The function $f^{\mmf^n}$ can be easily evaluated for different choices of $k$. For this, we just need to observe that if $f$ is constructed from the characteristic function of the lattice $\Ocal^2$, then the right translate satisfies 
\begin{equation}\label{IdentitySection2}
f^{\mmf^n}=q^{n/2}f_{\Psi^{\mmf^n}} 
\end{equation}
with $\Psi^{\mmf^n}$ the characteristic function of the lattice $\mmf^n\times\Ocal$. We also get from the Definition \eqref{DefinitionFlat}
\begin{equation}\label{IdentitySection1}
\frac{f_{\Psi^{\mmf^n}}(k)}{\zeta_k(1)}= \left\{ \begin{array}{lcl}
1 & \ifm & k=1 \\ 
 & & \\
q^{i-n} & \ifm & k=k_0(\varpi^i), \ i=1,...,n-1 \\ 
 & & \\ 
 q^{-n} & \ifm & k=w. 
\end{array}\right.
\end{equation}
For the function $\Wrm_2$, we first have by right $\Krm$-invariance\begin{equation}\label{Identityw}
\Wrm_2^{\mmf^n}(a(y)w)=\Wrm_2(a(y)wa(\varpi^{-n}))=\Wrm_2(a(y\varpi^{n})w)=\Wrm_2(a(y\varpi^{n})).
\end{equation}
Moreover, looking at \cite[Lemma 2.13]{saha} inspires the factorization   
$$
\left(\begin{matrix}
1 & \\ \varpi^i & 1
\end{matrix}\right)= \left(\begin{matrix}
\varpi^i & \\ & \varpi^i
\end{matrix}\right)\left(\begin{matrix}
1 & \varpi^{-i} \\ & 1
\end{matrix}\right)
\left(\begin{matrix}
\varpi^{-2i} & \\ & 1
\end{matrix}\right)\left(\begin{matrix}
& -1 \\ 1 &
\end{matrix}\right)
\left(\begin{matrix}
1 & \varpi^{-i} \\ & 1
\end{matrix}\right),
$$
which follows from the Bruhat decomposition. Noting that $\Wrm_2^{\mmf^n}$ is right invariant by the subgroup $\Nrm(\varpi^{-n}\Ocal)$, it follows that for every $i=1,...,n-1$,
\begin{equation}\label{Facto}
\begin{split}
\Wrm_2^{\mmf^n}(a(y)k_0(\varpi^i))=  & \ \Wrm_2^{\mmf^n}(a(y)n(\varpi^{-i})a(\varpi^{-2i})wn(\varpi^{-i})) \\ 
 = & \ \psi(y\varpi^{-i})\Wrm_2^{\mmf^n}(a(y)a(\varpi^{-2i})w) \\ = & \ \psi(y\varpi^{-i})\Wrm_2^{\mmf^{2i-n}}(a(y)),
\end{split}
\end{equation}
where $\psi$ is the non-trivial additive and unramified character of $k$ associated to the Whittaker model of $\pi_2$. 

Define $k_0(\varpi^i)=w$ if $i=0$. Inserting Lemma \ref{LemmaFcal} in decomposition \eqref{DecompositionK-integral} and using successively \eqref{IdentitySection1}, \eqref{IdentitySection2} \eqref{Identityw}, \eqref{Facto} and the fact that $\Wrm_i(\varpi^r)=0$ if $r<0$ yields the following relation
\begin{equation}
q^{n-1}(q+1)\frac{\int_\Krm \Fcal(k,n)dk}{\zeta_k(1)q^{n/2}}= \sum_{i=0}^n \alpha_i \mathcal{I}(i,n),
\end{equation}
with 
\begin{equation}\label{DefinitionIcal}
\mathcal{I}(i,n)=\int_{k^\times}\psi(y\varpi^{-i})\Wrm_1(a(y))\Wrm_2^{\mmf^{2i-n}} d^\times y
\end{equation}
and $\alpha_i=1$ if $i=0$ or $i=n$ and $\zeta_k(1)^{-1}$ otherwise. These integrals can be estimated very easily as follows : we use first the fact that the $\Wrm_i$ are right invariant by $\Arm(k)\cap \Krm$, so that
$$\mathcal{I}(i,n)=\sum_{r\geqslant 0}\Wrm_1(a(\varpi^r))\Wrm_2\left(a\left(\varpi^{r+n-2i}\right)\right)\int_{u\in\Ocal^\times}\psi(u\varpi^{r-i})d^\times u$$
For the integral over the units, we explicit the Haar measure on $d^\times u=\zeta_k(1)\frac{du}{|\cdot|}$ on $k^\times$ and we use \cite[Proposition 1.6.5]{goldfeld} to find
$$\int_{u\in\Ocal^\times} \psi(u\varpi^{r-i})d^\times = \left\{ \begin{array}{lll}
1 & \ifm & r\geqslant i \\ 
 & & \\
-q^{r-i}\zeta_k(1) & \ifm & r=i-1 \\ 
 & & \\
 0 & \mathrm{else}. 
\end{array}\right.$$
Furthermore, it is a well-known result (see \cite[Chapter 4]{bump} for example) that if the $\pi_i$'s are tempered, we have for $r\geqslant 0$
$$|\Wrm_i(\varpi^r)|\leqslant (r+1)q^{-\frac{r}{2}}.$$
Therefore, recalling that $\Wrm_i(\varpi^r)=0$ for $r<0$, we obtain 
\begin{equation}\label{EstimationIcal}
\begin{split}
|\mathcal{I}(i,n)|\leqslant & \ q^{-n/2}\sum_{r\geqslant i}(r+1)(r+n-2i+1)q^{i-r} \\
 + & \delta_{1\leqslant i \leqslant n-1}q^{-n/2}\zeta_k(1)i(n-i)q^{-n/2} \\ 
  \ll & \ n(n-i)q^{-n/2}.
  \end{split}
\end{equation}
Finally, inserting \eqref{EstimationIcal} in \eqref{DecompositionK-integral} gives the following proposition :
\begin{proposition}\label{PropositionLocal} Let $\pi_1,\pi_2$ be tempered and unramified representations of $\PGL_2(k)$ and for $n\in\Nbb$, let $\Zcal(\pi_1,\pi_2,n;t,s)$ be the local zeta integral defined in \eqref{DefinitionZetaIntegral}. Then for any sufficiently small $\varepsilon>0$ and $t,s\in\Cbb$ with $0\leqslant |s|+|t|\leqslant \varepsilon$, we have
$$\Zcal(\pi_1,\pi_2,n;t,s) \ll n^2q^{n(-1+\varepsilon)}.$$
\end{proposition}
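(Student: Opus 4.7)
The plan is to carefully follow the computation already outlined in the excerpt for the case $t=s=0$, then to verify that the deformation $|s|+|t|\leqslant \varepsilon$ introduces only an extra factor of $q^{n\varepsilon}$. First I would apply the Iwasawa decomposition together with the Haar measure \eqref{HaarMeasure} and the explicit form \eqref{DefinitionFlat} of the flat sections to reduce the zeta integral to
$$\Zcal(\pi_1,\pi_2,n;t,s) = \zeta_k(1)\int_{\Krm} \Fcal_{s,t}(k,n)\,dk,$$
where $\Fcal_{s,t}$ is the evident generalization of $\Fcal$ in \eqref{DefinitionFcal}. The deformation $(s,t)$ only affects the integrand through factors of the form $|y|^{\alpha(s,t)}$ with $|\alpha|\leqslant \varepsilon$; this is precisely the source of the $q^{n\varepsilon}$ slack since $|y|$ ranges over valuations bounded by $n$ in absolute value. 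Crucially $\Fcal_{s,t}(\cdot,n)$ is still left invariant by the subgroups of \eqref{GroupsInvariance} and right invariant by $\Krm_0(\varpi^n)$.

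Next I would decompose the $\Krm$-integral along the filtration $\Krm\supset \Krm_0(\varpi)\supset\cdots\supset \Krm_0(\varpi^n)$, as in \eqref{DecompositionK-integral}, so that the integrand on each slice $\Krm_0(\varpi^{i-1})\setminus \Krm_0(\varpi^i)$ is constant equal to $\Fcal_{s,t}(k_0(\varpi^i),n)$ by Lemma \ref{LemmaFcal} (with the convention $k_0(\varpi^0):=w$, and the base piece $\Krm_0(\varpi^n)$ contributing $\Fcal_{s,t}(1,n)$). The volumes of these slices produce the combinatorial prefactor $q^{n-1}(q+1)$ that we will have to divide out at the end.

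To evaluate $\Fcal_{s,t}(k_0(\varpi^i),n)$ I would read off the section values from \eqref{IdentitySection2}--\eqref{IdentitySection1}, again with a harmless $q^{n\varepsilon}$ perturbation due to the deformation. The Whittaker part $\Wrm_2^{\mmf^n}(a(y)k_0(\varpi^i))$ is handled via the Bruhat-type factorization written just before \eqref{Facto}: combined with the right invariance of $\Wrm_2^{\mmf^n}$ under $\Nrm(\varpi^{-n}\Ocal)$ and the Whittaker transformation law under $\Nrm(k)$, this produces $\psi(y\varpi^{-i})\Wrm_2^{\mmf^{2i-n}}(a(y))$. After collecting terms, the $k$-integral reduces to a weighted sum of the integrals $\mathcal{I}(i,n)$ of \eqref{DefinitionIcal}.

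Finally, I would bound each $\mathcal{I}(i,n)$ by decomposing $y\in k^\times$ into slices $y\in \varpi^r\Ocal^\times$, evaluating the unit integral $\int_{\Ocal^\times}\psi(u\varpi^{r-i})\,d^\times u$ via standard Gauss-sum formulas (it is $1$ for $r\geqslant i$, equals $-q^{r-i}\zeta_k(1)$ for $r=i-1$, and vanishes otherwise), and inserting the temperedness estimate $|\Wrm_j(a(\varpi^r))|\leqslant (r+1)q^{-r/2}$. The remaining $r$-sum is geometric of ratio $q^{-1}$, yielding $|\mathcal{I}(i,n)|\ll n(n-i)q^{-n/2}$. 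Summing over $0\leqslant i\leqslant n$ gives $\ll n^3 q^{-n/2}$, and after dividing by $q^{n-1}(q+1)$ and multiplying by $\zeta_k(1)q^{n/2}$ one finds $\Zcal \ll n^3 q^{-n}$, which is absorbed into the stated bound $n^2 q^{n(-1+\varepsilon)}$ using $n\ll_\varepsilon q^{n\varepsilon}$. The main obstacle is the case analysis in Lemma \ref{LemmaFcal}, which reduces an arbitrary coset representative to a canonical one via $(\Nrm\cap\Krm)$- and $(\Arm\cap\Krm)$-invariance; once that is in place, the remainder of the argument is essentially bookkeeping among the three normalizations (of the Haar measure, the Whittaker functionals, and the flat section).
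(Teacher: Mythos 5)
Your proposal reproduces the paper's proof step for step: the Iwasawa reduction to a $\Krm$-integral, the filtration decomposition along $\Krm\supset\Krm_0(\varpi)\supset\cdots\supset\Krm_0(\varpi^n)$ with Lemma \ref{LemmaFcal} giving canonical representatives, the section values \eqref{IdentitySection2}--\eqref{IdentitySection1}, the Bruhat manipulation leading to \eqref{Facto} and the integrals $\mathcal{I}(i,n)$, and finally the unit-integral plus temperedness estimates. Your bookkeeping at the end (getting $n^3 q^{-n}$ for $s=t=0$ and absorbing one power of $n$ and the deformation into $q^{n\varepsilon}$) matches the paper's stated bound, so the argument is the same in substance and detail.
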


%
\bibliography{Fourth}
\bibliographystyle{plain}
\end{document}